\documentclass[11pt,a4paper]{article}

\usepackage[utf8]{inputenc}
\usepackage[T1]{fontenc}
\usepackage[english]{babel} 
\usepackage{amsmath}
\usepackage{amsfonts}
\usepackage{amsthm}
\usepackage{amssymb}
\usepackage{makeidx}
\usepackage{graphicx}
\usepackage{lmodern}
\usepackage[left=2cm,right=2cm,top=2cm,bottom=2cm]{geometry}

\usepackage{color}
\usepackage{comment}
\usepackage[dvipsnames]{xcolor}
\usepackage{graphicx}
\usepackage{framed}
\usepackage{tikz}
\usepackage{calrsfs}
\DeclareMathAlphabet{\pazocal}{OMS}{zplm}{m}{n}
\usepackage{bm}

\usepackage{fourier-orns}
\usepackage{mathtools}
\usepackage{relsize}
\usepackage{dsfont}
\usepackage{comment}

\usepackage{hyperref}
\hypersetup{
	linktocpage = true,
	colorlinks = true,
	linkcolor = {RoyalBlue},
	urlcolor = {RoyalBlue},
	citecolor = {Green}}

\newcommand{\B}{\mathbb{B}}
\newcommand{\R}{\mathbb{R}}

\newcommand{\Apazo}{\pazocal{A}}
\newcommand{\Epazo}{\pazocal{E}}
\newcommand{\Fpazo}{\pazocal{F}}
\newcommand{\Gpazo}{\pazocal{G}}

\newcommand{\Kpazo}{\pazocal{K}}

\newcommand{\Mpazo}{\pazocal{M}}
\newcommand{\Bpazo}{\pazocal{B}}
\newcommand{\Cpazo}{\pazocal{C}}
\newcommand{\Qpazo}{\pazocal{Q}}

\newcommand{\Dpazo}{\pazocal{D}}

\newcommand{\Spazo}{\pazocal{S}}
\newcommand{\Opazo}{\pazocal{O}}

\newcommand{\Dcal}{\mathcal{D}}
\newcommand{\Mcal}{\mathcal{M}}

\newcommand{\Lcal}{\mathcal{L}}
\newcommand{\Vcal}{\mathcal{V}}
\newcommand{\Pcal}{\mathcal{P}}

\newcommand{\Kcal}{\mathcal{K}}

\newcommand{\Id}{\textnormal{Id}}

\newcommand{\supp}{\textnormal{supp}}

\newcommand{\Lip}{\textnormal{Lip}}
\newcommand{\AC}{\textnormal{AC}}
\newcommand{\Graph}{\textnormal{Graph}}

\newcommand{\Div}{\textnormal{div}}

\newcommand{\dist}{\textnormal{dist}}
\newcommand{\textbn}[1]{\textnormal{\textbf{#1}}}
\newcommand{\co}{\overline{\textnormal{co}} \hspace{0.05cm}}

\newcommand{\dsf}{\textnormal{\textsf{d}}}
\newcommand{\esf}{\textnormal{\textsf{e}}}

\newcommand{\Bnu}{\boldsymbol{\nu}}

\newcommand{\Beta}{\boldsymbol{\eta}}

\renewcommand{\epsilon}{\varepsilon}

\newcommand{\INTDom}[3]{\int_{#2} #1 \textnormal{d} #3}
\newcommand{\INTSeg}[4]{\int_{#3}^{#4} #1 \textnormal{d} #2}
\newcommand{\NormL}[3]{\parallel \hspace{-0.1cm} #1 \hspace{-0.1cm} \parallel _ {L^{#2}(#3)}}
\newcommand{\NormC}[3]{\left\| #1  \right\| _ {C^{#2}(#3)}}
\newcommand{\Norm}[1]{\parallel \hspace{-0.1cm} #1 \hspace{-0.1cm} \parallel}

\newcommand{\dcc}{\dsf_{\textnormal{cc}}}
\newcommand{\tto}{\rightrightarrows}

\newtheorem{Def}{Definition}[section]
\newtheorem{thm}[Def]{Theorem}
\newtheorem{prop}[Def]{Proposition}
\newtheorem{rmk}[Def]{Remark}
\newtheorem{lem}[Def]{Lemma}
\newtheorem{cor}[Def]{Corollary}




\newenvironment{taggedhyp}[1]
    {\taggedhypx}
    {\endtaggedhypx}

\title{Carathéodory Theory and A Priori Estimates for Continuity Inclusions in the Space of Probability Measures\thanks{\textit{This material is based upon work supported by the Air Force Office of Scientific Research under award number FA9550-18-1-0254.}}}

\author{Benoît Bonnet-Weill\footnote{LAAS-CNRS, Université de Toulouse, CNRS, 7 avenue du colonel Roche, F-31400 Toulouse, France. \textit{E-mail}: \texttt{benoit.bonnet@laas.fr} (Corresponding author)}\; and Hélène Frankowska\footnote{CNRS,  IMJ-PRG,  UMR  7586,  Sorbonne  Université,  4  place  Jussieu,  75252  Paris,  France. \hfill \hspace{3.5cm} \textit{E-mail}: \texttt{helene.frankowska@imj-prg.fr}}}


\begin{document}

\maketitle

\begin{abstract}
In this article, we extend the foundations of the theory of differential inclusions in the space of compactly supported probability measures, introduced recently by the authors, to the setting of general Wasserstein spaces. In this context, we prove a novel existence result à la Peano for this class of dynamics under mere Carathéodory regularity assumptions. The latter is based on a natural, yet previously unexplored set-valued adaptation of the semi-discrete Euler scheme proposed by Filippov to study ordinary differential equations whose right-hand sides are measurable in the time variable. By leveraging some of the underlying methods along with new estimates for solutions of continuity equations, we also bring substantial improvements to the earlier versions of the Filippov estimates, compactness and relaxation properties of the solution sets of continuity inclusions, which are derived in the Cauchy-Lipschitz framework.  
\end{abstract}

{\footnotesize
\textbf{Keywords :} Continuity Inclusions, Optimal Transport, Set-Valued Analysis, Peano Existence Theorem, Filippov Estimates, Compactness and Relaxation. 

\vspace{0.25cm}

\textbf{MSC2020 Subject Classification :} 28B20, 34A60, 34G20, 46N20, 49Q22
}

\tableofcontents


\section{Introduction}
\setcounter{equation}{0} \renewcommand{\theequation}{\thesection.\arabic{equation}}

In recent years, the study of continuity equations in the space of measures has been the object of an extensive interest in several mathematical communities. While the analysis of such partial differential equations was more commonly conveyed in Lebesgue or Sobolev spaces -- in which one could establish classical well-posedness results --, several research currents in pure and applied mathematics prompted the investigation of more general and weaker notions of solutions to these dynamics. 

Amongst these research endeavours, one of the most influential was the development of the modern theory of optimal transport -- rendered in the treatises \cite{AGS,OTAM,villani1} -- along with that of the theory of gradient flows in measure spaces. Indeed, the concepts introduced in the seminal works \cite{Jordan1998,Otto2001} and later formalised in \cite{AGS} expounded the fact that one could construct weak solutions to transport equations with very irregular driving fields by applying continuous-time steepest descent schemes to energy functionals defined over the space of probability measures. This far-reaching observation allowed to derive general well-posedness results for a wide variety of evolution equations encountered in physics, rational mechanics and biology, on the basis that they exhibit a variational structure, see e.g. \cite{AGS,CarrilloF2011,Carrillo2014,Erbar2010,Gianazza2009,Jordan1998,Maury2010,Otto2001,Santambrogio2017} and references therein. The infatuation for this innovative point of view stemmed both from its theoretical merits, and from its practical efficiency and adaptedness for designing numerical methods \cite{Benamou2016,Carlier2017,Peyre2015}. In a similar vein, the theory of mean-field games \cite{CDLL,Huang2006,Lasry2007} -- located halfway between control theory and the calculus of variations -- largely contributed to the popularisation of dynamical problems in measure spaces. Incidentally, some of the core concepts of these emerging research trends found relevant application outlets in a wealth of multiscale models aiming towards efficient descriptions of pedestrian flows \cite{CPT,Maury2011,Piccoli2018} as well as opinion propagation models \cite{AlbiPZ2017,Ha2009,ControlKCS} and swarming dynamics \cite{Albi2014,Carrillo2010,Carrillo2013}. Another active field of research that put this corpus of results to good use is that of mean-field control \cite{ContInc,PMPWass,Cavagnari2022,Fornasier2019,FornasierPR2014,Fornasier2014,Jimenez2020},  which saw the birth of several relevant extensions of the classical theory as further elaborated upon below. 

In the aforedescribed context, a growing body of literature at the intersection between PDE analysis, dynamical systems theory and optimal transport has been concerned with the derivation of general well-posedness results for Cauchy problems of the form 
\begin{equation}
\label{eq:IntroPDE}
\left\{
\begin{aligned}
& \partial_t \mu(t) + \Div_x (v(t,\mu(t)) \mu(t)) = 0, \\
& \mu(0) = \mu^0.
\end{aligned}
\right.
\end{equation}
Therein, the initial datum $\mu^0 \in \Pcal(\R^d)$ is a Borel probability measure, while the velocity field $v : [0,T] \times \Pcal(\R^d) \times \R^d \to \R^d$ is a Lebesgue-Borel map which may be \textit{nonlocal}, in the sense that it is allowed to depend on the measure variable itself. As alluded to in the previous paragraph, one of the main frameworks in which one can meaningfully derive well-posedness results for \eqref{eq:IntroPDE} is that of Wasserstein gradient flows (see e.g. \cite[Chapter 11]{AGS}), wherein $v(t,\mu(t)) \in L^p(\R^d,\R^d;\mu(t))$ is given as the (sub)gradient of a functional defined over the space of measures. Analogously, in the theory of mean-field games, the well-posedness of the forward measure dynamics frequently originates from a variational principle, which requires that the common cost of the agents satisfies a suitable convexity condition \cite{Lasry2007}. It is worth noting that general existence results are also available for irregular driving fields when the dynamics exhibits a Hamiltonian structure, see for instance \cite{AmbrosioGangbo}. On the other hand, as amply highlighted by the discussions in \cite[Chapter 8]{AGS}, the dynamics in \eqref{eq:IntroPDE} admits a natural interpretation as an ordinary differential equation in the Wasserstein manifold -- or rather bundle -- $(\Pcal_p(\R^d),W_p(\cdot,\cdot))$ for $p \in (1,+\infty)$. Thus, in the absence of an underlying variational structure, one should expect that the well-posedness of said Cauchy problems would rather stem from the regularity properties of the driving vector field. In that case, the results available in the literature can be split into two categories, depending on whether the velocity field is nonlocal or not. 
\begin{enumerate}
\item[$\diamond$] When $v : [0,T] \times \R^d \to \R^d$ is independent of the measure variable, the known optimal well-posedness settings are, on the one hand, Carathéodory assumptions (or small variations thereof) when the initial measure is arbitrary, and on the other hand Sobolev \cite{DiPernaLions} or BV \cite{AmbrosioPDE} regularity in the space variable, combined with integral bounds on the divergence or more general incompressibility assumptions \cite{Bianchini2020}, when the initial measure is absolutely continuous with respect to the Lebesgue measures. The structuring concepts of this latter class of solutions -- which are usually referred to as \textit{regular Lagrangian flows} --, are surveyed together with the classical Cauchy-Lipschitz theory in \cite{AmbrosioC2014}. 
\item[$\diamond$]  When $v : [0,T] \times \Pcal(\R^d) \times \R^d \mapsto \R^d$ also depends on the measure variable, general sufficient conditions for the well-posedness of \eqref{eq:IntroPDE} are only available in the Cauchy-Lipschitz and Carathéodory regularity frameworks (see e.g. \cite{ContInc,Crippa2013,Pedestrian} and references therein). In particular, there are currently no known generalisations of the concept of Lagrangian flow to the setting of nonlocal continuity equations.
\end{enumerate}
We end this literature overview by advertising a recent body of work initiated in \cite{Piccoli2019} and furthered in \cite{Camilli2021,CavagnariSS2022,PiccoliCDC2018}, in which the authors investigate relaxations of \eqref{eq:IntroPDE} wherein the driving fields are replaced by probability measures over the tangent bundle. This line of study -- which is highly reminiscent of the earlier work \cite{Bernard2008} -- bears strong resemblance with the theory of Young measures, and has already produced very promising results shedding light on the interplay between contraction semigroups in measure spaces and various kinds of Euler schemes. 

\bigskip

In the lineage of this body of work, the aim of this paper is to introduce several far-reaching refinements of the theory of \textit{continuity inclusions} in Wasserstein spaces, whose elaboration started in our previous work \cite{ContInc}. Therein, given a compactly supported measure $\mu^0 \in \Pcal_c(\R^d)$ and a \textit{set-valued map} $(t,\mu) \in [0,T] \times \Pcal_c(\R^d) \tto V(t,\mu) \subset L^p(\R^d,\R^d;\mu)$, we defined solutions of the Cauchy problem
\begin{equation}
\label{eq:IntroInc}
\left\{
\begin{aligned}
& \partial_t \mu(t) \in - \Div_x \Big( V(t,\mu(t)) \mu(t) \Big), \\
& \mu(0) = \mu^0, 
\end{aligned}
\right.
\end{equation}
as the collection of all absolutely continuous curves $\mu(\cdot) \in \AC([0,T],\Pcal_p(\R^d))$ for which there exists an $\Lcal^1$-\textit{measurable selection} $t \in [0,T] \mapsto v(t) \in V(t,\mu(t))$ such that \eqref{eq:IntroPDE} holds. The main motivation for considering such objects is that differential inclusions, and set-valued analysis at large, play an instrumental role in control theory and in the calculus of variations, as evidenced by the reference works \cite{Aubin1984,Aubin1990,Clarke,Vinter} and \cite{CannarsaS2004,Cesari2012}. They provide convenient tools to establish existence results for variational problems \cite{Filippov1962}, to derive first- and second-order optimality conditions -- both in the form of a Pontryagin Maximum Principle \cite{Frankowska1987,Frankowska2018,FrankowskaO2023,FrankowskaZZ2018} or Hamilton-Jacobi-Bellman equations \cite{Frankowska1989,Frankowska1995} --, and to investigate qualitative properties of optimal trajectories \cite{Cannarsa1991}. Owing to their mathematical versatility, these schemes have recently started to percolate in the communities of mean-field control and mean-field games \cite{SetValuedPMP,SemiSensitivity,Cannarsa2021,Cavagnari2022,Jimenez2020,PiccoliCDC2018}.  Compared with other notions that were put forth to define solutions of differential inclusions in measure spaces, such as those of \cite{CavagnariMQ2021,Jimenez2020}, our approach presents the advantage of being coherent with the modern theory of differential inclusions in vector spaces, surveyed e.g. in \cite{Aubin1984,Aubin1990}, as well as conceptually compatible with the geometric structure of Wasserstein spaces and the interpretation of continuity equations as generalised ODEs following \cite{AGS,Otto2001}. Besides, our construct is well-adapted to the study of control problems, as it ensures that there is a one-to-one correspondence between solutions of controlled measure dynamics and their set-valued counterparts.

The first main contribution of this article is an existence result à la Peano for \eqref{eq:IntroInc}, presented in  Theorem \ref{thm:Peano}, and derived under mere Carathéodory regularity assumptions. To be more precise, we show therein that if $V : [0,T] \times \Pcal_p(\R^d) \tto C^0(\R^d,\R^d)$ is $\Lcal^1$-measurable with respect to $t \in [0,T]$, continuous with respect to $\mu \in \Pcal_p(\R^d)$ and has convex images, then \eqref{eq:IntroInc} admits a solution from every initial datum. The requirement that the admissible velocities are convex is commonly known to be unavoidable to prove the existence of solutions to differential inclusions whose right-hand sides are not Lipschitz (see e.g. \cite[Chapter 2]{Aubin1984}), even in the familiar context of finite-dimensional vector spaces. Our strategy for proving this result revolves around an astute set-valued adaptation of the semi-discrete Euler scheme due to Filippov for Carathéodory ODEs, see e.g. \cite[Chapter 1]{Filippov2013}. To the best of our knowledge, this approach was not previously investigated even for classical differential inclusions. It also relies on rather new ways of studying the compactness of solutions to continuity equations, which we believe are of independent interest. 

The second contribution of this article lies in the transposition of the findings of \cite{ContInc}, derived for curves of compactly supported measures, to the setting of general Wasserstein spaces. The first of these key results are the so-called Filippov estimates, presented in Theorem \ref{thm:Filippov}, which provide the existence of a solution to \eqref{eq:IntroInc} whose distance to an a priori fixed measure curve is precisely controlled. Such estimates are extremely useful to produce admissible trajectories for control systems when conducting perturbative arguments and performing linearisations. The second of these results is the compactness of the solution set when the right-hand side of the dynamics is convex, which is needed in virtually every existence proof based on weak compactness arguments, both in optimal control theory and in the calculus of variations. The third and last is the so-called relaxation theorem, which asserts that in the absence of convexity, the closure of the solution set of \eqref{eq:IntroInc} coincides with that of the Cauchy problem in which the dynamics has been convexified. These three pivotal properties have been widely used in conjunction with one another to derive sharp first- and second-order optimality conditions for optimal control problems in various contexts, see e.g. \cite{Frankowska1990,Frankowska2018} for ordinary differential equations, \cite{Frankowska1990,FrankowskaMM2018} for general evolution equations in infinite-dimensional Banach spaces, and our previous work \cite{SetValuedPMP} concerned with mean-field optimal control problems. It should be noted that the relaxation theorem is also essential when investigating the fine properties of solutions to Hamilton-Jacobi-Bellman equations, as it allows to posit without loss of generality the existence of optimal trajectories associated with the corresponding value function. 

From a more technical standpoint, the extension of these results required new insights on the definition of continuity inclusions, as one needed a new functional setting that was amenable to working with velocities having unbounded support, but still enjoyed good topological properties. These reflections lead us to study continuity inclusions driven by velocity selections valued in $C^0(\R^d,\R^d)$ endowed with the topology of local uniform convergence, which happens to be a separable Fréchet space whose compact subsets are precisely characterised by the Ascoli-Arzel\`a theorem. It is our personal opinion that adopting this fresh viewpoint allows for a cleaner, more general, and perhaps more streamlined angle to study measure dynamics. We finally stress that while some of the working assumptions used throughout this manuscript are more stringent than those in our previous work, mainly to palliate the fact that the measures under consideration may have unbounded supports, the results presented below strictly contain those of \cite{ContInc}, up to minor technical adjustments. It should finally be noted that some of our developments are based on new quantitative stability estimates for continuity equations under low regularity requirements, which should again be of independent interest for our fellow practitioners.  

\bigskip

The contributions and organisation of the article can be summarised as follows. In Section \ref{section:Preliminary}, after recalling some concepts pertaining to measure theory, optimal transport, set-valued and functional analysis, we derive new compactness and stability estimates for solutions of continuity equations in Proposition \ref{prop:Moment} and Proposition \ref{prop:Gronwall} respectively. We then define solutions of \eqref{eq:IntroInc} for set-valued maps $V : [0,T] \times \Pcal_p(\R^d) \tto C^0(\R^d,\R^d)$ in terms of velocity selections which are measurable for the standard Fr\'echet topology of the space of continuous functions (see Definition \ref{def:CompactConv} below). Then, in Section \ref{section:Peano}, we prove a novel existence result for solutions of \eqref{eq:IntroInc} in Theorem \ref{thm:Peano}, under the mere assumption that the right-hand side of the dynamics is Caratheodory and convex-valued. We subsequently move on in Section \ref{section:Filippov} to the Cauchy-Lipschitz setting, and discuss the main results of the theory of differential inclusions that may be derived therein. In Section \ref{subsection:Filippov}, we start by establishing two far-reaching versions of the Filippov estimates, starting with a local one in Theorem \ref{thm:Filippov} and proceeding with a global one in Corollary \ref{cor:GlobalFilippov}. Then, we turn our attention to the topological properties of the solution sets of \eqref{eq:IntroInc} in Section \ref{subsection:Compactness}. In Theorem \ref{thm:Compactness}, we start by showing that the latter are compact for the topology of uniform convergence when the right-hand side of the dynamics has convex images. In Theorem \ref{thm:Relaxation}, we then relax the convexity requirement and prove that in this case, the closure of the solution set coincides with that of the convexified Cauchy problem. We finally close the paper by an appendix containing the proofs of several technical results and estimates.


\section{Preliminaries}
\label{section:Preliminary}
\setcounter{equation}{0} \renewcommand{\theequation}{\thesection.\arabic{equation}}

In the coming sections, we expose preliminary results pertaining to measure theory, set-valued analysis, optimal transport and measure dynamics in general. 


\subsection{Measure theory and optimal transport in Banach spaces}

In this first preliminary section, we recollect basic notions of measure theory and optimal transport in Banach spaces, for which we largely refer to \cite{AmbrosioFuscoPallara,BogachevI,DiestelUhl} and \cite{AGS} respectively.

\paragraph*{Elements of measure and integration theory.} Given a separable Banach space $(X,\Norm{\cdot}_X)$, we will denote by $X^*$ its topological dual and write $\langle \cdot,\cdot \rangle_X$ for the underlying duality pairing. In the sequel, the notation $C_b^0(X,\R^d)$ will refer to the vector space of continuous bounded maps from $X$ into $\R^d$, and in  the particular case where $X = \R^m$ for some $m \geq 1$, we will denote by $C^{\infty}_c(\R^m,\R)$ the vector space of infinitely differentiable functions with compact support. Letting $(Y,d_{Y}(\cdot,\cdot))$ be a complete separable metric space, we shall more generally write $C^0(X,Y)$ for the set of continuous maps from $X$ into $Y$, as well as $\AC(I,Y)$ for that of absolutely continuous arcs defined over an interval $I \subset \R$ with values in $Y$. In addition, $\Lip(\Omega,Y)$ will stand for the space of Lipschitz maps from a subset $\Omega \subset X$ into $Y$, and we shall write $\Lip(\phi \, ; \Omega)$ for the Lipschitz constant of an element $\phi \in \Lip(\Omega,Y)$.

In what follows, given closed set $\Omega \subset X$, we will consider the vector space $\Mcal(\Omega,\R^d)$ of $\R^d$-valued finite Radon measures. By the Riesz representation theorem (see e.g. \cite[Theorem 1.54]{AmbrosioFuscoPallara}), it is known that when $\Omega$ is compact, the latter is isomorphic to the topological dual of the Banach space $(C^0(\Omega,\R^d),\NormC{\cdot}{0}{\Omega,\R^d})$ under the action of the duality pairing
\begin{equation}
\label{eq:DualityContinuous}
\langle \Bnu , \phi \rangle_{C^0(\Omega,\R^d)} := \sum_{i=1}^d \INTDom{\phi_i(x)}{\Omega}{\Bnu_i(x)}, 
\end{equation}
defined for all $\Bnu \in \Mcal(\Omega,\R^d)$ and $\phi \in C^0(\Omega,\R^d)$. Throughout the article, we denote by $\Pcal(\Omega)$ the space of Borel probability measures over $\Omega$ endowed with the \textit{narrow topology}, that is the coarsest topology for which the mappings
\begin{equation}
\label{eq:NarrowTopo}
\mu \in \Pcal(\Omega) \mapsto \INTDom{\phi(x)}{X}{\mu(x)} \in \R 
\end{equation}
are continuous for every element $\phi \in C^0_b(\Omega,\R)$. It is a standard fact in measure theory (see e.g. \cite[Remark 5.1.2]{AGS}) that $\Pcal(\Omega)$ endowed with the narrow topology is a separable space, and we will write
\begin{equation*}
\mu_n \underset{n \to +\infty}{~ \rightharpoonup^*} \mu, 
\end{equation*}
for the notion of convergence induced by \eqref{eq:NarrowTopo} over $\Pcal(\Omega)$.

Given two separable Banach spaces $(X,\Norm{\cdot}_X)$ and $(Y,\Norm{\cdot}_Y)$ along with some $p \in [1,+\infty)$, we will write $(L^p(\Omega,Y;\mu),\NormL{\cdot}{p}{X,Y; \,\mu})$ for the space of maps from a subset $\Omega \subset X$ into $Y$ which are $p$-integrable in the sense of Bochner (see e.g. \cite[Chapter II]{DiestelUhl}) with respect to a measure $\mu \in \Mcal(\Omega,\R_+)$. Analogously, we let $L^{\infty}(\Omega,Y;\mu)$ be the space of $\mu$-essentially bounded maps from $X$ into $Y$, and use the denser notation $(L^p(I,Y),\NormL{\cdot}{p}{I})$ for $p \in [1,+\infty]$ when $X := I$ is an interval and $\mu := \Lcal^1$ is the standard $1$-dimensional Lebesgue measure. We recall below a powerful criterion for weak compactness in $L^1(I,X)$, whose statement can be found in \cite[Corollary 2.6]{Diestel1993} (see also the earlier versions \cite{Diestel1977,Ulger1991}).

\begin{thm}[A weak compactness criterion for Bochner integrable maps]
\label{thm:L1WeakCompactness}
Let $(X,\Norm{\cdot}_X)$ be a separable Banach space, $I \subset \R$ be an interval and $(v_n(\cdot)) \subset L^1(I,X)$. Suppose that there exists a map $m(\cdot) \in L^1(I,\R_+)$ and a family $(K_t)_{t \in I}$ of weakly compact subsets of $X$ such that 
\begin{equation*}
\Norm{v_n(t)}_X \, \leq m(t) \qquad \text{and} \qquad v_n(t) \in K_t 
\end{equation*}
for $\Lcal^1$-almost every $t \in [0,T]$ and each $n \geq 1$. Then, there exists a subsequence $(v_{n_k}(\cdot)) \subset L^1(I,X)$ that converges weakly to an element $v(\cdot) \in L^1(I,X)$. In particular
\begin{equation*}
\INTDom{\big\langle \Bnu(t) , v(t) - v_{n_k}(t) \big\rangle_X \, }{I}{t} ~\underset{k \to +\infty}{\longrightarrow}~ 0,
\end{equation*}
for every $\Bnu(\cdot) \in L^{\infty}(I,X^*) \subset L^1(I,X)^*$. 
\end{thm}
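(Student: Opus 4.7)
The plan is to derive this statement from the classical Diestel-Ruess-Schachermayer characterization of relatively weakly compact subsets of Bochner spaces $L^1(I,X)$, which is precisely the content of the reference \cite[Corollary 2.6]{Diestel1993} cited by the authors. In this characterization, the two hypotheses play complementary roles: the integrable majorant $m(\cdot)$ ensures \emph{uniform integrability} of the sequence, whereas the pointwise confinement $v_n(t) \in K_t$ supplies the \emph{pointwise weak compactness} that is indispensable in the absence of reflexivity (since bounded sets of a non-reflexive Banach space need not be relatively weakly compact).

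First I would check that the sequence is bounded and uniformly integrable in $L^1(I,X)$. Boundedness is immediate from $\|v_n(\cdot)\|_{L^1(I,X)} \leq \|m\|_{L^1(I)}$. Uniform integrability follows from the absolute continuity of the Lebesgue integral applied to the fixed scalar map $m(\cdot) \in L^1(I,\R_+)$: for every $\varepsilon > 0$ one can select $\delta > 0$ such that $\int_A m(t)\, \dn t < \varepsilon$ for every measurable $A \subset I$ with $\Lcal^1(A) < \delta$, and the pointwise domination gives $\sup_n \int_A \|v_n(t)\|_X\, \dn t < \varepsilon$.

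Next I would invoke the Diestel-Ruess-Schachermayer criterion, which asserts that a bounded, uniformly integrable family $(v_n(\cdot)) \subset L^1(I,X)$ whose elements satisfy $v_n(t) \in K_t$ for almost every $t$ with $K_t \subset X$ weakly compact is relatively weakly compact in $L^1(I,X)$. This is precisely the point where the nontrivial work lies (and where I would simply appeal to \cite{Diestel1993}; reproving it would require the measurable selection and James-type arguments developed there). The Eberlein-Šmulian theorem then converts relative weak compactness into sequential compactness, providing the desired subsequence $(v_{n_k}(\cdot))$ converging weakly to some $v(\cdot) \in L^1(I,X)$.

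Finally I would translate the abstract weak convergence into the duality statement in the theorem. Any $\Bphi(\cdot) \in L^{\infty}(I,X^*)$ induces the continuous linear functional
\begin{equation*}
u(\cdot) \in L^1(I,X) \longmapsto \INTDom{\langle \Bphi(t), u(t) \rangle_X}{I}{t},
\end{equation*}
whose norm is bounded by $\NormL{\Bphi}{\infty}{I,X^*}$ thanks to the pointwise inequality $|\langle \Bphi(t), u(t) \rangle_X| \leq \NormL{\Bphi(t)}{}{X^*} \Norm{u(t)}_X$ and the integrability of $\Norm{u(\cdot)}_X$. Evaluating this functional on the weak-limit identity yields the announced convergence. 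The main obstacle is the abstract compactness criterion invoked in the third step, but since the authors quote it from the literature, the argument above reduces to verifying its two hypotheses, which the assumptions of the theorem deliver immediately.
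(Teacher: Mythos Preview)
Your proposal is correct and, in fact, goes further than the paper itself: the authors do not prove Theorem~\ref{thm:L1WeakCompactness} at all but simply quote it from \cite[Corollary 2.6]{Diestel1993}. Your sketch faithfully unpacks how the stated hypotheses feed into that criterion (uniform integrability from the majorant $m(\cdot)$, pointwise weak compactness from the $K_t$), and correctly notes that Eberlein--\v{S}mulian converts relative weak compactness into sequential compactness, so there is nothing to add.
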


\paragraph*{Optimal transport and Wasserstein spaces.} Throughout this article, the notation $\Pcal_p(X)$ will refer to the subset of probability measures whose \textit{moment of order} $p \in [1,+\infty)$ is finite, that is
\begin{equation*}
\Mpazo_p^p(\mu) := \INTDom{|x|^p}{X}{\mu(x)} < +\infty. 
\end{equation*}
In the following definition, we recall the known concepts of \textit{image measure} through a Borel map, as well as that of \textit{transport plan}. 

\begin{Def}[Image measures and transport plans]
The \textnormal{image} of a measure $\mu \in \Pcal(X)$ through a Borel map $f : X \to Y$, denoted by $f_{\sharp} \mu \in \Pcal(Y)$, is defined as
\begin{equation*}
f_{\sharp} \mu(B) = \mu(f^{-1}(B)), 
\end{equation*}
for every Borel set $B \subset Y$. Given two probability measures $\mu,\nu \in \Pcal(X)$, we say that an element $\gamma \in \Pcal(X \times X)$ is a \textnormal{transport plan} between $\mu$ and $\nu$, denoted by $\gamma \in \Gamma(\mu,\nu)$, provided that
\begin{equation*}
\pi^1_{\sharp} \gamma = \mu \qquad \text{and} \qquad \pi^2_{\sharp} \gamma = \nu,
\end{equation*}
where $\pi^1,\pi^2 : X \times X \to X$ stand for the projections onto the first and second factors.  
\end{Def}

When $(X,\Norm{\cdot}_X) := (\R^d,|\cdot|)$ is a $d$-dimensional real vector space endowed with its usual Euclidean structure, it is a standard result in optimal transport theory that for any $p \in [1,+\infty)$, the quantity 
\begin{equation}
\label{eq:WassDef}
W_p(\mu,\nu) := \inf_{\gamma \in \Gamma(\mu,\nu)} \bigg( \INTDom{|x-y|^p}{\R^{2d}}{\gamma(x,y)} \bigg)^{1/p}
\end{equation}
defined for each $\mu,\nu \in \Pcal_p(\R^d)$ is a distance over $\Pcal_p(\R^d)$. Moreover, it comes as an easy consequence of the direct method of the calculus of variations that the infimum in \eqref{eq:WassDef} is always attained, and we denote by $\Gamma_o(\mu,\nu)$ the corresponding set of \textit{$p$-optimal transport plans}. In the following propositions, we recall some of the main properties of the \textit{Wasserstein spaces}, along with a handy distance estimate.

\begin{prop}[Topology of Wasserstein spaces]
\label{prop:WassDef}
The metric spaces $(\Pcal_p(\R^d),W_p(\cdot,\cdot))$ are complete and separable, and the Wasserstein distances metrise the narrow topology \eqref{eq:NarrowTopo}, in the sense that
\begin{equation*}
W_p(\mu_n,\mu) ~\underset{n \to +\infty}{\longrightarrow}~ 0 \qquad \text{if and only if} \qquad \left\{
\begin{aligned}
\mu_n & ~\underset{n \to +\infty}{~ \rightharpoonup^*}~ \mu, \\
\INTDom{|x|^p}{\R^d}{\mu_n(x)}  & ~\underset{n \to +\infty}{\longrightarrow}~ \INTDom{|x|^p}{\R^d}{\mu(x)}, 
\end{aligned}
\right.
\end{equation*}
for any sequence $(\mu_n) \subset \Pcal_p(\R^d)$ and each $\mu \in \Pcal_p(\R^d)$. Moreover, a subset $\Kcal \subset \Pcal_p(\R^d)$ is relatively compact for the topology induced by the metric $W_p(\cdot,\cdot)$ \textnormal{if and only if} it satisfies
\begin{equation*}
\sup_{\mu \in \Kcal} \INTDom{|x|^p}{\{x \; \text{s.t.} \; |x| \geq k\}}{\mu(x)} ~\underset{k \to +\infty}{\longrightarrow}~ 0,  
\end{equation*}
namely if and only if it is uniformly $p$-integrable. 
\end{prop}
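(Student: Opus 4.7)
The plan is to reduce all three assertions to classical facts from optimal transport, as collected in \cite{AGS,villani1}, presenting only the key mechanisms rather than reproving them from scratch. A convenient starting observation is that $\Mpazo_p(\mu) = W_p(\mu,\delta_0)$, so that for any $\mu,\nu \in \Pcal_p(\R^d)$ the triangle inequality yields $|\Mpazo_p(\mu)-\Mpazo_p(\nu)| \leq W_p(\mu,\nu)$; together with Jensen's inequality (which gives $W_1 \leq W_p$), this immediately shows that $W_p$-convergence implies both narrow convergence and convergence of $p$-th moments.

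For separability and completeness, I would first exhibit as a countable dense subset the finite convex combinations of Dirac masses $\sum_i q_i \delta_{x_i}$ with $q_i \in \mathbb{Q}_+$ and $x_i \in \mathbb{Q}^d$, obtained by approximating $\mu$ via pushforwards under fine dyadic partitions of $\R^d$ and then rationalising the weights—the tail of $\mu$ is discarded at the cost of an error controlled by $\int_{\{|x|\geq k\}}|x|^p d\mu$, which vanishes as $k\to+\infty$ since $\mu \in \Pcal_p(\R^d)$. Completeness is obtained by noting that a $W_p$-Cauchy sequence $(\mu_n)$ has uniformly bounded $p$-th moments (again using $\Mpazo_p(\mu_n) \leq \Mpazo_p(\mu_1) + W_p(\mu_1,\mu_n)$), hence is tight by Chebyshev, and admits a narrowly convergent subsequence $\mu_{n_k} \rightharpoonup^* \mu$ via Prokhorov; the sequence $(\Mpazo_p(\mu_n))$ being itself Cauchy in $\R$, one recovers convergence of $p$-th moments, and lower semicontinuity of $W_p$ under narrow convergence—a consequence of Kantorovich duality—promotes this to $W_p$-convergence of the whole sequence.

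For the metrization statement, the implication $(\Rightarrow)$ is the observation made above. For $(\Leftarrow)$, assuming $\mu_n \rightharpoonup^* \mu$ and $\Mpazo_p(\mu_n) \to \Mpazo_p(\mu)$, I would invoke the standard equivalence between convergence of $p$-th moments and uniform $p$-integrability along narrowly convergent sequences; an explicit transport plan can then be built by gluing an optimal plan between the restrictions of $\mu_n$ and $\mu$ on a large ball $B_R$ with a rough plan on the complement, whose contribution to the transport cost vanishes uniformly as $R \to +\infty$ precisely because of the uniform tail control.

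For the compactness characterisation, sufficiency follows from Chebyshev's inequality (which shows the family is tight and has uniformly bounded $p$-moments, hence is narrowly relatively compact by Prokhorov), combined with the metrization statement and the uniform control of tails that upgrades narrow to $W_p$ convergence along subsequences. For necessity, the key is that if $\Kcal$ is $W_p$-relatively compact but $\sup_{\mu \in \Kcal} \int_{\{|x|\geq k\}}|x|^p d\mu(x) \not\to 0$, then one could extract a sequence $(\mu_n)\subset\Kcal$ with $\int_{\{|x|\geq k_n\}} |x|^p d\mu_n \geq \varepsilon$ for some $k_n \to +\infty$, and passing to a $W_p$-convergent subsequence would violate the uniform integrability of $|x|^p$ that is forced by the convergence of $p$-th moments in the limit. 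The main technical hurdle is the rigorous handling of this last uniform-integrability argument: it is here that the fine structure of the Wasserstein topology—specifically the fact that $W_p$ encodes both narrow convergence and moment control in an inseparable way—plays its decisive role, and this is the step I would expect to require the most care.
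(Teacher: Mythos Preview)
Your proposal is correct and outlines precisely the standard arguments that the paper defers to by citing \cite[Chapter 7]{AGS} and \cite[Chapter 6]{villani1}; the paper itself gives no proof beyond these references, so your sketch is in fact more detailed than what appears in the manuscript.
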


\begin{proof}
We point the interested reader to \cite[Chapter 7]{AGS} or \cite[Chapter 6] {villani1}.
\end{proof}

\begin{prop}[Classical optimal transport estimate]
\label{prop:WassEst}
Given some $\mu,\nu \in \Pcal_p(\R^d)$, it holds that 
\begin{equation}
\label{eq:WassEst}
\INTDom{\phi(x)}{\R^d}{(\mu-\nu)(x)} ~\leq~ \Lip(\phi \, ; \R^d) W_1(\mu,\nu) ~\leq~ \Lip(\phi \, ; \R^d) W_p(\mu,\nu)
\end{equation}
for every $\phi \in \Lip(\R^d,\R)$. 
\end{prop}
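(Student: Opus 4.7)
The plan is to establish the two inequalities separately, with the first one resting on the Kantorovich-Rubinstein style manipulation of a transport plan and the second one following from Jensen's inequality applied to the cost.

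First, I would verify integrability: since $\phi$ is Lipschitz on $\R^d$, one has the affine bound $|\phi(x)| \leq |\phi(0)| + \Lip(\phi \, ; \R^d) \, |x|$, so $\phi \in L^1(\R^d,\R;\mu) \cap L^1(\R^d,\R;\nu)$ because $\mu,\nu \in \Pcal_p(\R^d) \subset \Pcal_1(\R^d)$. This makes all the integrals that appear well defined and finite.

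For the first inequality, I would pick an optimal transport plan $\gamma_o \in \Gamma_o(\mu,\nu)$ for the $1$-Wasserstein cost, whose existence is guaranteed by the discussion following \eqref{eq:WassDef}. Using the defining relations $\pi^1_\sharp \gamma_o = \mu$ and $\pi^2_\sharp \gamma_o = \nu$ for the marginals, together with the change of variables formula for pushforwards, one rewrites
\begin{equation*}
\INTDom{\phi(x)}{\R^d}{(\mu-\nu)(x)} = \INTDom{\bigl(\phi(x) - \phi(y)\bigr)}{\R^{2d}}{\gamma_o(x,y)}.
\end{equation*}
Applying the Lipschitz estimate $\phi(x) - \phi(y) \leq \Lip(\phi \, ; \R^d)\,|x-y|$ pointwise in the integrand and integrating against the probability measure $\gamma_o$ yields the bound by $\Lip(\phi \, ; \R^d) \, W_1(\mu,\nu)$.

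For the second inequality, I would show that $W_1 \leq W_p$ on $\Pcal_p(\R^d)$. Given any $\gamma \in \Gamma(\mu,\nu)$, since $\gamma$ is a probability measure and $t \mapsto t^p$ is convex on $\R_+$ for $p \in [1,+\infty)$, Jensen's inequality gives
\begin{equation*}
\INTDom{|x-y|}{\R^{2d}}{\gamma(x,y)} \leq \left( \INTDom{|x-y|^p}{\R^{2d}}{\gamma(x,y)} \right)^{1/p}.
\end{equation*}
Taking the infimum of the right-hand side over $\gamma \in \Gamma(\mu,\nu)$ and using the definition \eqref{eq:WassDef} yields $W_1(\mu,\nu) \leq W_p(\mu,\nu)$, which combined with the first inequality concludes the proof. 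There is no real obstacle here: the only subtle point is ensuring integrability of $\phi$ against the marginals, which is handled by the affine growth estimate above.
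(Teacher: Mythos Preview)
Your proof is correct. The paper does not actually give a proof of this proposition: it is stated as a ``classical optimal transport estimate'' and left without a proof environment, being treated as a well-known fact (in the spirit of the Kantorovich--Rubinstein duality and the monotonicity of $W_p$ in $p$). Your argument supplies exactly the standard justification one would expect for such a statement, and there is nothing to compare against.
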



\subsection{Set-valued analysis and topological properties of continuous functions}

In this section, we recall some notations and results of set-valued and functional analysis. We shall mostly rely on the treatises \cite{Aubin1984,Aubin1990} for the former and reference monographs \cite{Horvath2012,Rudin1987} for the latter.

In what follows, we write $(E,d_E(\cdot,\cdot))$ to denote a separable \textit{Fr\'echet space}, i.e. a complete separable locally convex topological vector space $E$ whose topology is induced by a translation-invariant metric $d_E(\cdot,\cdot)$. In this context, we define the \textit{closed convex hull} of a set $B \subset E$ as
\begin{equation*}
\co(B) := \overline{\bigcup_{N \geq 1} \Bigg\{ \sum_{i=1}^N \alpha_i b_i ~\, ~\textnormal{s.t.}~~ b_i \in B, ~ \alpha_i \in [0,1] ~~ \text{for $i \in \{1,\dots,N\}$} ~~ and ~~ \sum_{i=1}^N \alpha_i =1  \Bigg\}}^E,
\end{equation*}
where ``$\overline{\bullet}^E$'' stands for the closure with respect to $d_E(\cdot,\cdot)$. We will also use the generic notation
\begin{equation*}
\dist_{E}(x \, ; \Qpazo) := \inf_{y \in \Qpazo} d_{E}(x,y)
\end{equation*}
for the distance between an element $x \in E$ and a closed set $\Qpazo \subset E$. 

\paragraph*{Set-valued analysis.} Given two complete separable metric spaces $(X,d_X(\cdot,\cdot))$ and $(Y,d_Y(\cdot,\cdot))$, we write $\Fpazo : X \tto Y$ to mean that $\Fpazo(\cdot)$ is a \textit{set-valued map} -- or \textit{multifunction} -- from $X$ into $Y$. A set-valued map is said to have closed images (respectively convex, when the notion makes sense) if the sets $\Fpazo(x) \subset Y$ are closed (respectively convex) for each $x \in X$. In addition, we define its \textit{graph} by 
\begin{equation*}
\Graph(\Fpazo) := \bigg\{ (x,y) \in X \times Y \, ~\textnormal{s.t.}~ y \in \Fpazo(x) \bigg\}.
\end{equation*}
Below, we recall the standard concept of measurability for set-valued mappings defined over some subinterval $I \subset \R$ of the real line, endowed with the complete $\sigma$-algebra of Lebesgue-measurable sets. 

\begin{Def}[Measurable set-valued maps and measurable selections]
\label{def:Meas}
A multifunction $\Fpazo : I \tto Y$ is said to be \textnormal{$\Lcal^1$-measurable} provided that 
\begin{equation*}
\Fpazo^{-1}(\Opazo) := \Big\{ t \in I \, ~\textnormal{s.t.}~ \Fpazo(t) \cap \Opazo \neq \emptyset \Big\}
\end{equation*}
is $\Lcal^1$-measurable for every open set $\Opazo \subset Y$. A mapping $f : I \to Y$ is called a \textnormal{measurable selection} of $\Fpazo(\cdot)$ if it is $\Lcal^1$-measurable and such that $f(t) \in \Fpazo(t)$ for $\Lcal^1$-almost every $t \in I$. 
\end{Def}

In the following theorem, we recollect a deep result of set-valued analysis which provides the existence of measurable selections for measurable multifunctions whose images are closed subsets of a complete separable metric space. 

\begin{thm}[Existence of measurable selections]
\label{thm:Selection}
Every $\Lcal^1$-measurable set-valued map $\Fpazo : I \tto Y$ with nonempty closed image admits a measurable selection. 
\end{thm}

In  our subsequent developments, we will use the notions of \textit{continuity} and \textit{Lipschitz regularity} for set-valued mappings, both of which are recalled in the following definitions. Therein, we denote by $\B_X(x,r)$ and $\B_Y(y,r)$ the closed ball of radius $r>0$ centered at $x \in X$ and $y \in Y$ respectively, and use the condensed notation $\B_X(\Omega,r) := \{ x \in X ~\,\text{s.t.}~ d_X(x,x') \leq r ~\text{for some $x' \in \Omega$}~ \}$ for any $\Omega \subset X$. 

\begin{Def}[Continuous set-valued maps]
\label{def:UscSetvalued}
A multifunction $\Fpazo : X \tto Y$ is said to be \textnormal{continuous} at $x \in X$ if both the following conditions hold: 
\begin{enumerate}
\item[$(i)$] $\Fpazo(\cdot)$ is \textnormal{upper-semicontinuous} at $x \in X$, i.e. for every $\epsilon >0$, there exists $\delta > 0$ such that 
\begin{equation*}
\Fpazo(x') \subset \B_Y(\Fpazo(x) , \epsilon)
\end{equation*}
for all $x' \in \B_X(x,\delta)$.
\item[$(ii)$] $\Fpazo(\cdot)$ is \textnormal{lower-semicontinuous} at $x \in X$, i.e. for every $\epsilon > 0$ and each $y \in \Fpazo(x)$, there exists $\delta  > 0$ such that 
\begin{equation*}
\Fpazo(x') \cap \B_Y(y,\epsilon) \neq \emptyset
\end{equation*}
for all $x' \in \B_X(x,\delta)$.

\end{enumerate}
\end{Def} 

\begin{Def}[Lipschitz continuous set-valued maps]
A multifunction $\Fpazo : X \tto Y$ is said to be \textnormal{Lipschitz continuous} if there exists a constant $L > 0$ such that 
\begin{equation*}
\Fpazo(x') \subset \B_Y \Big( \Fpazo(x) \hspace{0.05cm} , \hspace{0.05cm} L d_X(x,x') \Big),
\end{equation*}
for every $x,x' \in X$. 
\end{Def}  

In the sequel, we will frequently resort to the general notion of \textit{Carathéodory} set-valued map between metric spaces, which is defined as follows. 

\begin{Def}[Carathéodory set-valued maps]
A set-valued map $\Gpazo : I \times X \tto Y$ is said to be \textnormal{Carathéodory} if $t \in I \tto \Gpazo(t,x)$ is $\Lcal^1$-measurable for all $x \in X$ and $x \in X \tto \Gpazo(t,x)$ is continuous for $\Lcal^1$-almost every $t \in I$.  
\end{Def}

In the following lemma -- whose proof is outlined in Appendix \ref{section:AppendixMeas} --, we state measurable selection principles adapted from \cite[Section 8.1]{Aubin1990} and \cite[Section 9]{Wagner1977} that we shall extensively use in the sequel. Therein, we let $\varphi : I \times Y \to \R_+ \cup \{+\infty\}$ be an extended real-valued function satisfying
\begin{equation}
\label{eq:varphisup}
\varphi(t,y) = \sup_{n \geq 1} \varphi_n(t,y)  
\end{equation}
for $\Lcal^1$-almost every $t \in I$ and all $y \in Y$, with $(\varphi_n(\cdot,\cdot))$ being a pointwisely non-decreasing sequence of Carathéodory maps. This implies in particular that $t \in I \mapsto \varphi(t,y)$ is $\Lcal^1$-measurable for each $y \in Y$, whereas $y \in Y \mapsto \varphi(t,y)$ is lower-semicontinuous for $\Lcal^1$-almost every $t \in I$.

\begin{lem}[Measurable selections principles]
\label{lem:MeasurableSel}
Let $\Fpazo : I \tto Y$ and $\Gpazo : I \times X \tto Y$ be two set-valued maps with nonempty images and $\varphi : I \times Y \to \R_+$ be as above. In addition, fix two $\Lcal^1$-measurable functions $x : I \to X$ and $L : I \to \R_+$. 
\begin{enumerate}
\item[(a)] Suppose that $\Fpazo(\cdot)$ is $\Lcal^1$-measurable with compact images, and that the set-valued map 
\begin{equation*}
t \in I \tto \Fpazo(t) \cap \Big\{ y \in Y \, ~\textnormal{s.t.}~ \varphi(t,y) \leq L(t) \Big\} 
\end{equation*}
has nonempty images. Then the latter is $\Lcal^1$-measurable, and there exists a measurable selection $t \in I \mapsto f(t) \in \Fpazo(t)$ such that $\varphi(t,f(t)) \leq L(t)$ for $\Lcal^1$-almost every $t \in I$.
\item[(b)] Suppose that $\Fpazo(\cdot)$ is $\Lcal^1$-measurable with compact images. Then, the set-valued map 
\begin{equation*}
t \in I \tto \Fpazo(t) \cap \bigg\{ y \in Y ~\, \text{s.t.}~ \varphi(t,y) = \inf_{z \in \Fpazo(t)} \varphi(t,z) \bigg\} 
\end{equation*}
is $\Lcal^1$-measurable with nonempty closed images, and as such admits a measurable selection. 
\item[(c)] Suppose that $\Gpazo(\cdot,\cdot)$ is Carathéodory with closed images. Then, the set-valued map $t \in I \tto \Gpazo(t,x(t))$ is $\Lcal^1$-measurable, and as such admits a measurable selection.
\end{enumerate}
\end{lem}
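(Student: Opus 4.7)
The strategy is to reduce each of the three statements to classical measurable selection theorems (e.g.\ \cite{Aubin1990,Wagner1977}) for genuinely Carathéodory data, by exploiting the representation $\varphi = \sup_n \varphi_n$ together with the compactness of $\Fpazo(t)$.

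\textbf{Part (a).} For every integer $n \geq 1$, the map $(t,y) \mapsto \varphi_n(t,y) - L(t)$ is Carathéodory, so its $0$-sublevel set $S_n(t) := \{y \in Y \; \text{s.t.} \; \varphi_n(t,y) \leq L(t)\}$ defines an $\Lcal^1$-measurable closed-valued multifunction (this is a standard consequence of Carathéodory regularity, see e.g. \cite[Chapter~8]{Aubin1990}). Since $\Fpazo(\cdot)$ is itself $\Lcal^1$-measurable with compact images, the intersection $\Fpazo_n(t) := \Fpazo(t) \cap S_n(t)$ is $\Lcal^1$-measurable with compact images as well. The monotonicity of $(\varphi_n)$ yields the pointwise identity
\begin{equation*}
\Fpazo(t) \cap \bigl\{ y \in Y \; \text{s.t.} \; \varphi(t,y) \leq L(t) \bigr\} \,=\, \bigcap_{n \geq 1} \Fpazo_n(t),
\end{equation*}
and a countable intersection of measurable compact-valued multifunctions is again measurable. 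The nonemptiness being granted by hypothesis, Theorem \ref{thm:Selection} produces the desired measurable selection.

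\textbf{Part (b).} The key step is to establish the $\Lcal^1$-measurability of $m(t) := \inf_{z \in \Fpazo(t)} \varphi(t,z)$. Setting $m_n(t) := \inf_{z \in \Fpazo(t)} \varphi_n(t,z)$, each $m_n(\cdot)$ is $\Lcal^1$-measurable by the classical projection theorem applied to the Carathéodory function $\varphi_n$ on the measurable compact-valued map $\Fpazo$. I claim that $m(t) = \sup_{n \geq 1} m_n(t)$ for $\Lcal^1$-a.e. $t$. The inequality ``$\geq$'' is immediate from $\varphi \geq \varphi_n$. For the reverse, pick for each $n$ a near-minimiser $z_n \in \Fpazo(t)$ of $\varphi_n(t,\cdot)$; by compactness of $\Fpazo(t)$ and along a subsequence $z_{n_k} \to z^\star \in \Fpazo(t)$. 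The lower semicontinuity of each $\varphi_m(t,\cdot)$ coupled with the monotonicity of $(\varphi_n)$ yields, for every fixed $m$,
\begin{equation*}
\varphi_m(t,z^\star) \,\leq\, \liminf_{k \to +\infty} \varphi_m(t,z_{n_k}) \,\leq\, \liminf_{k \to +\infty} \varphi_{n_k}(t,z_{n_k}) \,=\, \sup_{n \geq 1} m_n(t),
\end{equation*}
and letting $m \to +\infty$ shows $\varphi(t,z^\star) \leq \sup_n m_n(t)$, hence $m(t) \leq \sup_n m_n(t)$. Thus $m(\cdot)$ is measurable as a countable supremum, the argmin set is nonempty by lower semicontinuity of $\varphi(t,\cdot)$ on the compact set $\Fpazo(t)$, and part (a) applied with $L := m$ delivers both the measurability of the argmin and a measurable selection.

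\textbf{Part (c).} This is classical and relies only on the Carathéodory regularity of $\Gpazo$ together with the measurability of $x(\cdot)$. Approximating $x(\cdot)$ pointwise a.e. by a sequence of simple functions $x_n(\cdot)$, each of the maps $t \rightrightarrows \Gpazo(t,x_n(t))$ is $\Lcal^1$-measurable since on each level set of $x_n$ it coincides with $t \rightrightarrows \Gpazo(t,\bar x)$ for a fixed $\bar x$. A Scorza-Dragoni-type argument (see \cite[Section 8.2]{Aubin1990}) combined with the continuity of $\Gpazo(t,\cdot)$ and closedness of its images lets one pass to the limit, yielding measurability of $t \rightrightarrows \Gpazo(t,x(t))$; the existence of a measurable selection then follows from Theorem~\ref{thm:Selection}.

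The main obstacle is the justification of $m = \sup_n m_n$ in part (b): the natural minimax inequality is one-sided, and the reverse direction genuinely requires the compactness of $\Fpazo(t)$ together with a diagonal argument exploiting both the monotonicity and the lower semicontinuity encoded in the structural assumption \eqref{eq:varphisup}.
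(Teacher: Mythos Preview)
Your argument is correct and follows essentially the same route as the paper. In part~(a), the paper verifies measurability by directly checking preimages of closed sets via Cantor's intersection theorem, whereas you invoke the black-box stability of measurable compact-valued multifunctions under countable intersections; these are the same argument in different packaging. In part~(b), you actually supply a detail the paper glosses over: the paper simply asserts that $\lim_n \inf_{z \in \Fpazo(t)} \varphi_n(t,z) = \inf_{z \in \Fpazo(t)} \varphi(t,z)$, while you justify this minimax-type equality with the compactness/diagonal argument. Part~(c) is deferred to standard references in both.
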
 

We end this primer in set-valued analysis by stating an adaptation of Aumann's integral convexity theorem, for which we point the interested reader to \cite[Theorem 8.6.4]{Aubin1990}. 

\begin{thm}[Convexity of the Aumann integral]
\label{thm:Aumann}
Suppose that $(X,\Norm{\cdot}_X)$ is a separable Banach space and let $\Fpazo : I \to X$ be a set-valued map with closed nonempty images which is \textnormal{integrably bounded}, in the sense that there exists some $k(\cdot) \in L^1(I,\R_+)$ such that 
\begin{equation*}
\Fpazo(t) \subset k(t) \B_X,
\end{equation*}
for $\Lcal^1$-almost every $t \in I$. Then, for every measurable set $\Omega \subset I$, any measurable selection $t \in I \mapsto f(t) \in \co \Fpazo(t)$ and all $\delta >0$, there exists a measurable selection $t \in I \mapsto f_{\delta}(t) \in \Fpazo(t)$ such that 
\begin{equation*}
\bigg\| \INTDom{f(t)}{\Omega}{t} - \INTDom{f_{\delta}(t)}{\Omega}{t} \, \bigg\|_X \leq \delta.
\end{equation*}
\end{thm}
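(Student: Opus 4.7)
The plan is a three-stage strategy: reduce to a step-wise convex combination of finitely many measurable selections, apply a Lyapunov-type convexity argument on each cell, and reassemble. This is the classical route to Aumann's theorem, and the integrable bound $k(\cdot)$ together with separability of $X$ will be what allows all approximations to be controlled uniformly.

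First I would invoke a Castaing-type representation: since $\Fpazo(\cdot)$ is measurable with closed nonempty images in the separable Banach space $X$, a standard consequence of Theorem \ref{thm:Selection} provides a countable family of measurable selections $(g_n(\cdot))_{n \geq 1}$ of $\Fpazo(\cdot)$ with $\Fpazo(t) = \overline{\{g_n(t) : n \geq 1\}}^X$ for $\Lcal^1$-a.e. $t \in [0,T]$. Consequently $\co \Fpazo(t)$ coincides with the closed convex hull of $\{g_n(t)\}_n$, so the given selection $f(\cdot) \in \co \Fpazo(\cdot)$ can be approximated in $L^1(\Omega,X)$ by an expression of the form $\sum_{i=1}^N \lambda_i(\cdot) g_i(\cdot)$, where the $\lambda_i(\cdot) \in [0,1]$ are measurable and satisfy $\sum_{i=1}^N \lambda_i(\cdot) = 1$. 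Approximating further the coefficients $\lambda_i(\cdot)$ by step functions constant on a finite measurable partition $\{P_1,\dots,P_M\}$ of $\Omega$ reduces me to the case where $\tilde f(t) := \sum_{i=1}^N \alpha_i^j g_i(t)$ on each $P_j$, with constants $\alpha_i^j \in [0,1]$ and $\sum_i \alpha_i^j = 1$, and $\big\| \INTDom{(\tilde f - f)(t)}{\Omega}{t} \big\|_X \leq \delta/2$.

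Next, on each piece $P_j$ I apply Lyapunov's convexity theorem to the nonatomic $X^N$-valued vector measure
$$ A \subset P_j ~ \mapsto ~ \Big( \INTDom{g_i(t)}{A}{t} \Big)_{i=1}^N, $$
whose boundedness is guaranteed by the integrable bound $k(\cdot)$. Since this measure is Banach-valued, I first approximate each $g_i|_{P_j}$ by a simple $X$-valued function in $L^1(P_j,X)$, which confines the range of the measure to a finite-dimensional subspace of $X^N$ where the classical Lyapunov theorem applies exactly. This yields a measurable partition $P_j = \bigsqcup_{i=1}^N P_i^j$ satisfying $\big\| \INTDom{g_i(t)}{P_i^j}{t} - \alpha_i^j \INTDom{g_i(t)}{P_j}{t} \big\|_X \leq \delta/(2MN)$. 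Setting $f_\delta(t) := g_i(t)$ on each $P_i^j$ (and any fixed measurable selection of $\Fpazo$ elsewhere) produces a measurable selection of $\Fpazo$ with $\big\| \INTDom{(f_\delta - \tilde f)(t)}{\Omega}{t} \big\|_X \leq \delta/2$ by summing the cell-wise estimates, whence the conclusion follows from the triangle inequality.

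The main technical hurdle lies in the Lyapunov step: in infinite dimensions the range of a nonatomic vector measure is only convex \emph{after closure}, so the simple-function approximation of the $g_i$'s reducing to a finite-dimensional range is essential, and some care is needed to control this approximation error in $L^1(P_j,X)$ jointly with the $\delta/(2MN)$ budget. The remaining stages are essentially bookkeeping with Castaing representations and $L^1$-approximations, ensuring the cumulative error remains under $\delta$ uniformly across the $MN$ cells.
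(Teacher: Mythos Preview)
The paper does not prove this theorem; it is stated as a preliminary result with a pointer to Theorem~8.6.4 in Aubin--Frankowska's monograph. Your outline via Castaing representation, reduction to step-wise convex combinations of finitely many selections, and a Lyapunov convexity argument with a finite-dimensional reduction is precisely the classical route and is essentially what is carried out in that reference; the sketch is correct, and you rightly flag the finite-dimensional approximation as the key technical point, since in infinite-dimensional Banach spaces the range of a nonatomic vector measure is in general only convex after closure.
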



\paragraph*{Topologies and metrics over $C^0(\R^d,\R^d)$.} Throughout the manuscript, we will almost exclusively work with multifunctions whose values are subsets of $C^0(\R^d,\R^d)$. Following the pioneering work \cite{Warner1958}, it is known that the most natural topology to endow this space with is that of \textit{uniform convergence on compact sets} -- or \textit{compact convergence} --, whose definition and distinctive features are recalled below. 

\begin{Def}[The topology of compact convergence]
\label{def:CompactConv}
A sequence of continuous maps $(v_n) \subset C^0(\R^d,\R^d)$ is said to converge \textnormal{uniformly on compact sets} to some $v \in C^0(\R^d,\R^d)$ provided that 
\begin{equation*}
\NormC{v - v_n}{0}{K,\R^d} ~\underset{n \to + \infty}{\longrightarrow}~ 0,
\end{equation*}
for every compact set $K \subset \R^d$. The topology that this notion of convergence induces on $C^0(\R^d,\R^d)$ is complete, separable, and metrised by the translation-invariant metric 
\begin{equation}
\label{eq:Defdcc}
\dcc(v,w) := \sum_{k=1}^{+ \infty} 2^{-k} \min \Big\{ 1 \, , \NormC{v-w}{0}{B(0,k),\R^d} \Big\}
\end{equation}
that is defined for each $v,w \in C^0(\R^d,\R^d)$. As such, the latter endows $(C^0(\R^d,\R^d),\dcc(\cdot,\cdot))$ with the structure of a separable Fr\'echet space.  
\end{Def}


In our subsequent developments, we will always consider $C^0(\R^d,\R^d)$ endowed with the separable Fr\'echet topology induced by $\dcc(\cdot,\cdot)$, which interestingly carries a functional characterisation of Carathéodory vector fields, as illustrated by the following result borrowed from \cite[page 511]{Papageorgiou1986}. 

\begin{lem}[Carathéodory vector fields as measurable functions in $C^0(\R^d,\R^d)$]
\label{lem:Carathéodory}
A vector field $(t,x) \in I \times \R^d \mapsto v(t,x) \in \R^d$ is Carathéodory \textnormal{if and only if} its functional lift $t \in I \mapsto v(t) \in C^0(\R^d,\R^d)$ is $\Lcal^1$-measurable for the topology of uniform convergence on compact sets.
\end{lem}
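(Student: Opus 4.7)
The statement is an equivalence, so I would prove each implication separately. The unifying observation is that, since $(C^0(\R^d,\R^d),\dcc(\cdot,\cdot))$ is a separable metric space, an $\Lcal^1$-measurable map $t \mapsto v(t)$ into it is characterised by the scalar measurability of the distance functions $t \mapsto \dcc(v(t),w)$ for every $w$ in some countable dense subset (or, equivalently, for every $w \in C^0(\R^d,\R^d)$). Indeed, preimages of open balls under $v(\cdot)$ being measurable, and open sets of $(C^0(\R^d,\R^d),\dcc(\cdot,\cdot))$ being countable unions of such balls by separability, this characterisation would deliver the reverse implication immediately.

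For the direction ``measurable lift $\Rightarrow$ Carathéodory'', continuity of $x \mapsto v(t,x)$ for every $t$ is automatic from the codomain of the lift, so only the $\Lcal^1$-measurability of $t \mapsto v(t,x)$ for each fixed $x \in \R^d$ demands justification. I would obtain it by observing that the evaluation functional $\textnormal{ev}_x : f \in C^0(\R^d,\R^d) \mapsto f(x) \in \R^d$ is continuous with respect to $\dcc(\cdot,\cdot)$: indeed, $\dcc(f_n,f) \to 0$ implies that $f_n \to f$ uniformly on every $B(0,k)$, hence pointwise at $x$ as soon as $|x| \leq k$. Consequently $t \mapsto v(t,x) = \textnormal{ev}_x(v(t))$ is $\Lcal^1$-measurable as a composition of a measurable map with a continuous one.

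For the converse ``Carathéodory $\Rightarrow$ lift measurable'', I would fix $w \in C^0(\R^d,\R^d)$ and prove that $t \mapsto \dcc(v(t),w)$ is $\Lcal^1$-measurable. Since each summand in the series \eqref{eq:Defdcc} is dominated by $2^{-k}$, the partial sums converge uniformly in $t$ to $\dcc(v(t),w)$, so it suffices to establish that $t \mapsto \NormC{v(t)-w}{0}{B(0,k),\R^d} = \sup_{x \in B(0,k)} |v(t,x)-w(x)|$ is $\Lcal^1$-measurable for every $k \geq 1$. Here I would invoke the continuity of $x \mapsto v(t,x)-w(x)$ on the compact ball $B(0,k)$, valid for $\Lcal^1$-almost every $t$ by the Carathéodory assumption, in order to replace this supremum by the same one taken over the countable dense subset $B(0,k) \cap \mathbb{Q}^d$. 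The resulting countable supremum of $\Lcal^1$-measurable scalar functions $t \mapsto |v(t,x)-w(x)|$ is then measurable, which concludes the argument.

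The only delicate point is not an analytical obstacle but rather the correct articulation of the reduction chain: from $\Lcal^1$-measurability of $t \mapsto v(t) \in C^0(\R^d,\R^d)$, down through the scalar measurability of the distance functionals $t \mapsto \dcc(v(t),w)$, then to the compact-ball seminorms, and finally to countable suprema over rational points. Each individual reduction invokes a classical separability or density principle; once the chain is laid out, the proof essentially becomes an exercise combining the separability of $(C^0(\R^d,\R^d),\dcc(\cdot,\cdot))$ with the density of $\mathbb{Q}^d$ in $\R^d$.
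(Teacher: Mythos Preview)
Your proof is correct and follows the standard route for this classical equivalence. The paper itself does not supply a proof but simply refers the reader to \cite[page 511]{Papageorgiou1986}; your argument is precisely the kind of self-contained justification one finds in such references, combining the separability of $(C^0(\R^d,\R^d),\dcc(\cdot,\cdot))$ to reduce measurability of the lift to scalar measurability of the distance functionals, and then the density of $\mathbb{Q}^d$ to express each compact-ball seminorm as a countable supremum of measurable functions.
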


In light of this result, we will systematically identify $\Lcal^1$-measurable maps $t \in I \mapsto v(t) \in C^0(\R^d,\R^d)$ with Carathéodory vector fields $v : I \times \R^d \to \R^d$, and thus work within the vector space
\begin{equation*}
\Lcal(I,C^0(\R^d,\R^d)) := \bigg\{ v : I \times \R^d \to \R^d \,~\text{s.t. $v(\cdot,\cdot)$ is a Carathéodory vector field}  \bigg\}. 
\end{equation*}
In addition to its amenable topological properties, the notion of uniform convergence on compact sets is particularly well-tailored to the formulation of compactness results, as illustrated by the following general version of the Ascoli-Arzel\`a theorem from \cite[Theorem 11.28]{Rudin1987}.

\begin{thm}[Ascoli-Arzel\`a compactness theorem]
\label{thm:Ascoli}
Let $(v_n) \subset C^0(\R^d,\R^d)$ be a sequence of maps which are uniformly bounded and equi-continuous on compact sets. Then, there exists an element $v \in C^0(\R^d,\R^d)$ for which
\begin{equation*}
\dcc(v_{n_k},v) ~\underset{k \to +\infty}{\longrightarrow}~ 0,
\end{equation*}
along a subsequence $(v_{n_k}) \subset C^0(\R^d,\R^d)$. Similarly if $K \subset \R^d$ is compact and $(v_n) \subset C^0(K,\R^d)$ is a sequence of uniformly bounded and equi-continuous maps, then there exists $v \in C^0(K,\R^d)$ such that
\begin{equation*}
\NormC{v-v_{n_k}}{0}{K,\R^d} ~\underset{k \to +\infty}{\longrightarrow}~ 0,
\end{equation*}
along a subsequence $(v_{n_k}) \subset C^0(K,\R^d)$. 
\end{thm}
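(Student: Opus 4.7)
The plan is to deduce the first, global statement from the second, local one via a standard diagonal extraction argument, and to treat the compact-set version as the classical Ascoli-Arzel\`a theorem.

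First I would address the second statement, which is the textbook Ascoli-Arzel\`a theorem on a compact metric space: given a uniformly bounded and equi-continuous sequence $(v_n) \subset C^0(K,\R^d)$, one extracts a Cauchy subsequence in the uniform norm by using the separability of $K$ to produce pointwise convergence on a countable dense subset via Tychonoff/diagonal extraction, and then upgrades pointwise convergence to uniform convergence through an $\varepsilon/3$ argument built on equi-continuity. I would either quote this directly from \cite[Theorem 11.28]{Rudin1987} or sketch the $\varepsilon/3$ argument in one line.

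For the first statement I would proceed by exhausting $\R^d$ with the compact balls $\overline{B(0,k)}$. Applying the second statement to $(v_n)$ restricted to $\overline{B(0,1)}$ yields a subsequence $(v_{n_j^{(1)}})$ converging uniformly on $\overline{B(0,1)}$ to some $v^{(1)} \in C^0(\overline{B(0,1)},\R^d)$. Applying it again to $(v_{n_j^{(1)}})$ on $\overline{B(0,2)}$ extracts a further subsequence converging uniformly there to some $v^{(2)}$; by uniqueness of uniform limits the restriction of $v^{(2)}$ to $\overline{B(0,1)}$ coincides with $v^{(1)}$. Iterating and taking the Cantor diagonal subsequence $(v_{n_k})$, the pointwise limit defines an element $v \in C^0(\R^d,\R^d)$ since continuity is a local property, and $(v_{n_k})$ converges to $v$ uniformly on every $\overline{B(0,k)}$, hence on every compact subset of $\R^d$.

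The last step is to translate uniform convergence on compact sets into convergence for the metric $\dcc$. Fix $\varepsilon > 0$, pick $K_0 \geq 1$ with $\sum_{k > K_0} 2^{-k} < \varepsilon/2$, and use the uniform convergence on $\overline{B(0,K_0)}$ (which dominates that on each smaller ball) to make each of the finitely many terms $\min\{1, \NormC{v-v_{n_k}}{0}{B(0,k),\R^d}\}$ with $k \leq K_0$ smaller than $\varepsilon/(2K_0)$ for $k$ large enough; summing yields $\dcc(v,v_{n_k}) < \varepsilon$. The only mildly delicate point I anticipate is the bookkeeping of this truncation argument, since the series in \eqref{eq:Defdcc} must be split carefully between the uniformly small tail (handled by the geometric weights) and the finitely many leading terms (handled by the uniform-on-compacts convergence); everything else reduces to the classical compact-set theorem plus a diagonal extraction.
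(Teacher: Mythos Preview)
Your proposal is correct and essentially mirrors the paper's treatment: both rely on the classical Ascoli--Arzel\`a theorem, citing \cite[Theorem 11.28]{Rudin1987}. The only minor difference is one of presentation order: the paper invokes Rudin's result directly for the global statement on $(C^0(\R^d,\R^d),\dcc)$ and then observes that the compact-set version follows because the $\dcc$-topology restricted to $C^0(K,\R^d)$ coincides with the uniform-norm topology, whereas you establish the compact-set version first and then build the global one yourself via diagonal extraction over the balls $\overline{B(0,k)}$ together with the tail-splitting argument for the series defining $\dcc$. Your route is slightly more self-contained; the paper's is terser since Rudin's theorem is already stated for uniform convergence on compact subsets, making the diagonalization redundant.
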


\begin{proof}
The proof of the first compactness statement can be found in \cite[Theorem 11.28]{Rudin1987}, while that of the second one is simply the standard Ascoli-Arzel\`a theorem. 
\end{proof}

In order to formulate the well-posedness results of Section \ref{section:Filippov}, we will also need a global notion of vicinity for continuous maps, that will be inherently stronger than the local one provided by the metric $\dsf_{cc}(\cdot,\cdot)$. For this reason, we will also consider the extended supremum metric defined by 
\begin{equation}
\label{eq:ExtendedC0}
\dsf_{\sup}(v,w) := \sup_{x \in \R^d} |v(x) - w(x)| \in \R \cup \{+\infty\},
\end{equation}
for every $v,w \in C^0(\R^d,\R^d)$. In the following lemma, we recollect for the sake of completeness a classical result which shows that the natural lift of $\dsf_{\sup}(\cdot,\cdot)$ to the topological vector space $\Lcal(I,C^0(\R^d,\R^d))$ is complete, in the sense that its Cauchy sequences converge.

\begin{lem}[Completeness of the integral of the extended metric]
\label{lem:CompleteMetric}
Consider a sequence of maps $(v_n(\cdot)) \subset \Lcal(I,C^0(\R^d,\R^d))$ satisfying the Cauchy condition
\begin{equation}
\label{eq:ExtendedCauchy}
\INTSeg{\dsf_{\sup}(v_n(t),v_m(t))}{t}{0}{T} ~\underset{n,m \to +\infty}{\longrightarrow}~ 0. 
\end{equation}
Then, there exists an element $v(\cdot) \in \Lcal(I,C^0(\R^d,\R^d))$ such that
\begin{equation*}
\INTSeg{\dsf_{\sup}(v_n(t),v(t))}{t}{0}{T} ~\underset{t \to +\infty}{\longrightarrow}~ 0.
\end{equation*}
\end{lem}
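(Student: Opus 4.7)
The plan is to mirror the standard proof of completeness of $L^1(I,X)$ for a Banach space $X$, adapted to the extended metric $\dsf_{\sup}(\cdot,\cdot)$ and followed by a measurability verification for the Fréchet topology induced by $\dcc(\cdot,\cdot)$.

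\textbf{Step 1: Extracting a rapidly Cauchy subsequence.} Using the Cauchy assumption \eqref{eq:ExtendedCauchy}, I would select inductively a subsequence $(v_{n_k}(\cdot))$ such that
\begin{equation*}
\INTSeg{\dsf_{\sup}(v_{n_{k+1}}(t),v_{n_k}(t))}{t}{0}{T} \leq 2^{-k}, \qquad \text{for all $k \geq 1$.}
\end{equation*}
Setting $S(t) := \sum_{k=1}^{+\infty} \dsf_{\sup}(v_{n_{k+1}}(t),v_{n_k}(t)) \in [0,+\infty]$, monotone convergence yields $\INTSeg{S(t)}{t}{0}{T} \leq 1$, so $S(t) < +\infty$ for $\Lcal^1$-almost every $t \in [0,T]$. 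Denote by $N \subset [0,T]$ the $\Lcal^1$-negligible set where this fails.

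\textbf{Step 2: Pointwise (in $t$) construction of the limit.} For every $t \in [0,T] \setminus N$ and every $m > k$, the telescoping inequality $\dsf_{\sup}(v_{n_m}(t),v_{n_k}(t)) \leq \sum_{j=k}^{+\infty} \dsf_{\sup}(v_{n_{j+1}}(t),v_{n_j}(t))$ shows that the sequence $(v_{n_k}(t,\cdot))$ is uniformly Cauchy on $\R^d$. By the standard completeness of $C^0(\R^d,\R^d)$ under the sup-norm (applied to the increments, which do lie in $C_b^0(\R^d,\R^d)$), there exists $v(t) \in C^0(\R^d,\R^d)$ such that $\dsf_{\sup}(v_{n_k}(t),v(t)) \to 0$. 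Extending $v(\cdot)$ arbitrarily (e.g.\ to zero) on $N$ defines a map $v : [0,T] \to C^0(\R^d,\R^d)$.

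\textbf{Step 3: Measurability of $v(\cdot)$.} This is the step I expect to require the most care, since the limit is taken for the extended metric $\dsf_{\sup}(\cdot,\cdot)$, whereas membership in $\Lcal([0,T],C^0(\R^d,\R^d))$ is defined via the Fréchet metric $\dcc(\cdot,\cdot)$. The key observation is that $\dsf_{\sup}$-convergence is strictly stronger than $\dcc$-convergence: from the definition \eqref{eq:Defdcc}, $\dcc(v_{n_k}(t),v(t)) \leq \sum_{k=1}^{+\infty} 2^{-k} \min \{1, \dsf_{\sup}(v_{n_k}(t),v(t))\}$, so $\dcc(v_{n_k}(t),v(t)) \to 0$ for every $t \in [0,T] \setminus N$. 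Since each $v_{n_k}(\cdot)$ is $\Lcal^1$-measurable as a map into the separable Fréchet space $(C^0(\R^d,\R^d),\dcc(\cdot,\cdot))$, the almost-everywhere limit $v(\cdot)$ is also $\Lcal^1$-measurable in the same sense, whence $v(\cdot) \in \Lcal([0,T],C^0(\R^d,\R^d))$ by Lemma \ref{lem:Carathéodory}.

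\textbf{Step 4: Convergence of the full sequence.} I first show that the subsequence converges in the desired integral sense. The bound $\dsf_{\sup}(v_{n_k}(t),v(t)) \leq \sum_{j=k}^{+\infty} \dsf_{\sup}(v_{n_{j+1}}(t),v_{n_j}(t)) \leq S(t)$ provides an integrable dominating function, so Lebesgue's dominated convergence theorem yields $\INTSeg{\dsf_{\sup}(v_{n_k}(t),v(t))}{t}{0}{T} \to 0$. For arbitrary $n$, I split
\begin{equation*}
\INTSeg{\dsf_{\sup}(v_n(t),v(t))}{t}{0}{T} \leq \INTSeg{\dsf_{\sup}(v_n(t),v_{n_k}(t))}{t}{0}{T} + \INTSeg{\dsf_{\sup}(v_{n_k}(t),v(t))}{t}{0}{T},
\end{equation*}
where both summands can be made arbitrarily small by choosing $n$ and $k$ large enough, by \eqref{eq:ExtendedCauchy} and the previous convergence respectively. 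This yields the conclusion.
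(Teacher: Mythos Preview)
Your proof is correct and follows essentially the same route as the paper's: extract a rapidly Cauchy subsequence, use summability of the increments to obtain a uniform pointwise limit for almost every $t$, and then upgrade to integral convergence. The only cosmetic differences are that the paper writes the limit explicitly as $v_1(t) + \sum_k(v_{k+1}(t)-v_k(t))$ and invokes Fatou's lemma at the end, whereas you phrase the limit abstractly via uniform Cauchy completeness and use dominated convergence with $S(\cdot)$ as a majorant; your measurability argument via $\dcc$-convergence of the a.e.\ limit is also a clean alternative to the paper's direct Carath\'eodory verification.
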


\begin{proof}
The proof of this result being somewhat standard, it is deferred to Appendix \ref{section:AppendixCompleteness} below. 
\end{proof}


\subsection{Continuity equations and inclusions in the space of probability measures}

In this section, we recollect known results about continuity equations in measures spaces, following the usual Cauchy-Lipschitz and superposition-type theories surveyed e.g. in \cite{AmbrosioC2014,AGS}, as well as their set-valued counterpart introduced by the authors of the present manuscript in \cite{ContInc}. 

Given a real number $p \in [1,+\infty)$, a time horizon $T > 0$ and a velocity field $v : [0,T] \times \R^d \to \R^d$, we will focus our attention on the well-posedness and qualitative properties of the Cauchy problem
\begin{equation}
\label{eq:CE}
\left\{
\begin{aligned}
& \partial_t \mu(t) + \Div_x (v(t)\mu(t)) = 0, \\
& \mu(\tau) = \mu_{\tau}, 
\end{aligned}
\right.
\end{equation}
wherein $(\tau,\mu_{\tau}) \in [0,T] \times \Pcal_p(\R^d)$ is a fixed datum and $\mu(\cdot) \in C^0(I,\Pcal(\R^d))$ is a narrowly continuous curve of measure. The dynamics appearing in the first line of \eqref{eq:CE} is referred to as a \textit{continuity equation}, and understood in the sense of distributions 
\begin{equation*}
\INTSeg{\INTDom{\Big( \partial_t \phi(t,x) + \big\langle \nabla_x \phi(t,x) , v(t,x) \big\rangle \Big)}{\R^d}{\mu(t)(x)}}{t}{0}{T} = 0, 
\end{equation*}
for every test function $\phi \in C^{\infty}_c((0,T) \times \R^d,\R)$. Throughout this article, we will mostly work with velocity fields satisfying all or parts of the following assumptions which are standard when studying continuity equations in the Cauchy-Lipschitz framework.

\begin{taggedhyp}{\textbn{(CE)}}
\label{hyp:CE} \hfill
\begin{enumerate}
\item[$(i)$] The vector field $v : [0,T] \times \R^d \to \R^d$ is Carathéodory, and there exists a map $m(\cdot) \in L^1([0,T],\R_+)$ such that 
\begin{equation*}
|v(t,x)| \leq m(t) \big( 1 + |x| \big),
\end{equation*}
for $\Lcal^1$-almost every $t \in [0,T]$ and all $x \in \R^d$. 
\item[$(ii)$] There exists a map $l(\cdot) \in L^1([0,T],\R_+)$ such that 
\begin{equation*}
\Lip(v(t) \, ; \R^d) \leq l(t), 
\end{equation*}
for $\Lcal^1$-almost every $t \in [0,T]$. 
\end{enumerate}
\end{taggedhyp}

Let it be noted that, while the velocity fields are generally assumed to be locally Lipschitz with respect to $x \in \R^d$ in \cite{ContInc} and other references in the literature dealing with compactly supported measures, the global condition written in \ref{hyp:CE}-$(ii)$ is necessary for the derivation of quantitative stability estimates in $(\Pcal_p(\R^d),W_p(\cdot,\cdot))$. In the following definition, we recall the notion of \textit{flows of homeomorphisms} generated by a Carathéodory vector field.

\begin{Def}[Flows of homeomorphisms]
\label{def:Flows}
Let $v : [0,T] \times \R^d \to \R^d$ be a velocity field satisfying hypotheses \ref{hyp:CE}. Then, we denote by $(\Phi^v_{(\tau,t)}(\cdot))_{\tau,t \in [0,T]} \subset C^0(\R^d,\R^d)$ the unique \textnormal{semigroup of homeomorphisms} that solve the Cauchy problems 
\begin{equation*}
\Phi^v_{(\tau,t)}(x) = x + \INTSeg{v \big( s , \Phi_{(\tau,s)}^v(x) \big)}{s}{\tau}{t}, 
\end{equation*}
for all times $\tau,t \in [0,T]$ and every $x \in \R^d$. 
\end{Def}

Under the Cauchy-Lipschitz assumptions of Hypotheses \ref{hyp:CE}, the following strong well-posedness result holds for solutions of continuity equations in $(\Pcal_p(\R^d),W_p(\cdot,\cdot))$. In what follows given some $q \in [1,+\infty]$, we will frequently write $\Norm{\cdot}_q := \NormL{\cdot}{q}{[0,T]}$ for the sake of conciseness.

\begin{thm}[Classical well-posedness of continuity equations]
\label{thm:Wellposedness}
Let $(\tau,\mu_{\tau}) \in [0,T] \times \Pcal_p(\R^d)$ and $v : [0,T] \times \R^d \to \R^d$ be a velocity field satisfying Hypotheses \ref{hyp:CE}-$(i)$. Then, the Cauchy problem \eqref{eq:CE} admits forward solutions $\mu(\cdot) \in \AC([\tau,T],\Pcal_p(\R^d))$, and there exists a constant $c_p > 0$ which only depends on the magnitudes of $p,\Mpazo_p(\mu^0)$ and $\Norm{m(\cdot)}_1$, such that 
\begin{equation}
\label{eq:ACEstimate}
W_p(\mu(t_1),\mu(t_2)) \leq c_p\INTSeg{m(s)}{s}{t_1}{t_2},
\end{equation}
for all times $0 \leq t_1 \leq t_2 \leq T$. If in addition Hypothesis \ref{hyp:CE}-$(ii)$ holds, then the solution of \eqref{eq:CE} is unique, globally defined on $[0,T]$, and can be represented explicitly as 
\begin{equation}
\label{eq:FlowRep}
\mu(t) = \Phi^v_{(\tau,t)}(\cdot)_{\sharp} \mu_{\tau}, 
\end{equation}
for all times $t \in [0,T]$. 
\end{thm}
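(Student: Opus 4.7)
My strategy is to dispatch the Cauchy--Lipschitz case first, where the characteristic flow does all the work, and then to bootstrap to the Carathéodory case via mollification in the space variable.

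Assume first that both \ref{hyp:CE}-$(i)$ and \ref{hyp:CE}-$(ii)$ hold. The Cauchy--Lipschitz theorem for ODEs together with the linear growth bound prevents blow-up on $[0,T]$, so that the flow $\Phi^v_{(0,\cdot)}$ of Definition \ref{def:Flows} is globally defined and satisfies $|\Phi^v_{(0,t)}(x)| \leq (1+|x|) \exp(\int_0^t m(s) \dn s)$ by a direct Gronwall estimate. I would then set $\mu(t) := \Phi^v_{(0,t)\sharp} \mu^0$ and verify that it solves \eqref{eq:CE} in distributional form by differentiating $t \mapsto \int_{\R^d} \phi(\Phi^v_{(0,t)}(x)) \dn \mu^0(x)$ for any $\phi \in C^\infty_c((0,T)\times\R^d,\R)$. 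To establish uniqueness, I would take a second solution $\tilde \mu(\cdot)$ and appeal to the classical superposition principle, which yields $\tilde \mu(t) = (e_t)_\sharp \eta$ for some $\eta \in \Pcal(C^0([0,T],\R^d))$ concentrated on integral curves of $v$; the Lipschitz hypothesis forces those curves to coincide with $t \mapsto \Phi^v_{(0,t)}(x)$, so that $\eta = (\Phi^v_{(0,\cdot)})_\sharp \mu^0$ and $\tilde \mu \equiv \mu$.

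For the AC estimate, the natural coupling $\pi_{\tau,t} := (\Phi^v_{(0,\tau)}, \Phi^v_{(0,t)})_\sharp \mu^0 \in \Gamma(\mu(\tau),\mu(t))$ gives
\begin{equation*}
W_p(\mu(\tau),\mu(t))^p \leq \INTDom{\bigg| \INTSeg{v(s,\Phi^v_{(0,s)}(x))}{s}{\tau}{t} \bigg|^p}{\R^d}{\mu^0(x)}.
\end{equation*}
A Jensen/Hölder step followed by the linear growth bound and the explicit control on $|\Phi^v_{(0,s)}(x)|$ reduces the right-hand side to $c_p^p (\int_\tau^t m(s) \dn s)^p$, with $c_p$ depending only on $p$, $\Mpazo_p(\mu^0)$ and $\Norm{m(\cdot)}_1$. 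Equivalently, one could argue via the metric derivative characterisation of AC curves in Wasserstein spaces, combined with a Gronwall estimate on $\Mpazo_p^p(\mu(t))$.

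The existence statement under \ref{hyp:CE}-$(i)$ alone is then obtained by approximation. I would mollify the velocity in space, setting $v_n(t,\cdot) := v(t,\cdot) * \rho_{1/n}$, so that each $v_n$ still satisfies \ref{hyp:CE}-$(i)$ with the same $m(\cdot)$ and additionally fulfils \ref{hyp:CE}-$(ii)$ for some $l_n(\cdot) \in L^1([0,T],\R_+)$. The preceding step provides solutions $\mu_n \in \AC([0,T],\Pcal_p(\R^d))$ which enjoy uniform $p$-moment bounds and the same AC estimate \eqref{eq:ACEstimate} with a constant $c_p$ that does not depend on $n$. An Ascoli-type argument --- tightness from the uniform moment bound combined with equi-continuity in the $W_p$-metric, via the compactness criterion of Proposition \ref{prop:WassDef} --- extracts a limit curve $\mu(\cdot) \in \AC([0,T],\Pcal_p(\R^d))$. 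Passing to the limit in the distributional formulation against $\phi \in C^\infty_c((0,T)\times\R^d,\R)$ is straightforward: the spatial support of $\phi$ is compact, $v_n \to v$ uniformly on compact subsets of $\R^d$ for a.e.~$t$, and $\mu_n(t) \rightharpoonup^* \mu(t)$ narrowly, so dominated convergence applies.

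The main obstacle is the uniqueness argument in the Lipschitz case: running a Gronwall estimate directly on $t \mapsto W_p^p(\mu_1(t),\mu_2(t))$ by differentiating along an optimal coupling requires a chain rule in Wasserstein spaces whose justification itself hinges on the superposition principle. Invoking the superposition principle at the outset, as sketched above, is by far the cleanest route and the one I would adopt.
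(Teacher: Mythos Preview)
Your proposal is correct and, since the paper's own proof is a one-line citation to \cite[Section 8.1]{AGS} together with the momentum estimates of Proposition~\ref{prop:Momentum}, you are essentially supplying the details that reference stands in for. The route you take --- flow representation plus superposition for the Lipschitz case, then spatial mollification and Ascoli--Arzel\`a for the Carath\'eodory case --- is the standard one underlying those sources.

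One small remark on scope: your mollification argument yields the AC estimate \eqref{eq:ACEstimate} only for the particular solution you construct, whereas the statement (and its later uses, e.g.\ in Theorem~\ref{thm:Compactness}) reads more naturally as an a priori bound on \emph{any} solution driven by a field satisfying \ref{hyp:CE}-$(i)$. The paper obtains this stronger version through Proposition~\ref{prop:Momentum}, whose proof applies the superposition principle directly to an arbitrary solution and bounds $W_p(\mu(\tau),\mu(t))$ via the coupling $(\esf_\tau,\esf_t)_\sharp \Beta_\mu$. Since you already invoke superposition for uniqueness, the same mechanism gives you the a priori version with no extra work; it is worth stating that explicitly.
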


\begin{proof}
These statements follow e.g. from a combination of several standard results from \cite[Section 8.1]{AGS} along with the moment estimates displayed in Proposition \ref{prop:Moment} below.
\end{proof}

Definition \ref{def:Flows} and Theorem \ref{thm:Wellposedness} inform us together that, in the Cauchy-Lipschitz setting, solutions of continuity equations can be explicitly written as the transports of the initial datum along the characteristic curves generated by the velocity field. When the latter is less regular, it is still possible to give a rigorous meaning to this intuition by leveraging the concept of \textit{superposition measure}. In what follows, we denote by $\Sigma_T := C^0([0,T],\R^d)$ the space of continuous arcs from $[0,T]$ into $\R^d$.

\begin{Def}[Superposition measures]
\label{def:SuperpositionMeas}
An element $\Beta \in \Pcal(\R^d \times \Sigma_T)$ is called a \textnormal{superposition measure} associated with a Lebesgue-Borel velocity field $v : [0,T] \times \R^d \to \R^d$ from time $\tau \in [0,T]$ if it is concentrated on the set of pairs $(x,\sigma) \in \R^d \times \AC([0,T],\R^d)$ satisfying
\begin{equation}
\label{eq:SuperpositionMeasureDef}
\sigma(t) = x + \INTSeg{v(s,\sigma(s))}{s}{\tau}{t} 
\end{equation}
for all times $t \in [\tau,T]$. 
\end{Def}

A direct link can be provided between superposition measures and solutions of \eqref{eq:CE} under the action of the so-called \textit{evaluation map}, which is defined for all times $t \in [0,T]$ by 
\begin{equation*}
\esf_t : (x,\sigma) \in \R^d \times \Sigma_T \mapsto \sigma(t) \in \R^d.
\end{equation*}
More precisely, it can be checked that if $\Beta \in \Pcal(\R^d \times \Sigma_T)$ is a superposition measure associated with a velocity field $v : [0,T] \times \R^d \to \R^d$ such that $(\pi_{\R^d})_{\sharp} \Beta = \mu_{\tau}$ -- where $\pi_{\R^d} : (x,\sigma) \in \R^d \times \Sigma_T \mapsto x \in \R^d$ stands for the projection onto the space component --, and if the following local integrability condition 
\begin{equation*}
\INTSeg{\INTDom{\mathds{1}_K(\sigma(t)) |v(t,\sigma(t))|}{\R^d \times \Sigma_T}{\Beta(x,\sigma)}}{t}{\tau}{T} < +\infty 
\end{equation*}
holds for each compact set $K \subset \R^d$, then the curve defined by $ \mu(t) := (\esf_t)_{\sharp} \Beta$ for all  times $t \in [\tau,T]$ is a solution of \eqref{eq:CE}. We recall in the following theorem the converse of this statement -- colloquially known in the literature as the \textit{superposition principle} --, for which we refer e.g. to \cite[Theorem 3.4]{AmbrosioC2014}.

\begin{thm}[Superposition principle]
\label{thm:Superposition}
Let $(\tau,\mu_{\tau}) \in [0,T] \times \Pcal_p(\R^d)$ and $\mu(\cdot) \in \AC([\tau,T],\Pcal_p(\R^d))$ be a solution of \eqref{eq:CE} driven by a Lebesgue-Borel velocity field $v : [0,T] \times \R^d \rightarrow \R^d$ satisfying 
\begin{equation}
\label{eq:SuperpositionIntegrability}
\INTSeg{\INTDom{\frac{|v(t,x)|}{1+|x|}}{\R^d}{\mu(t)(x)}}{t}{\tau}{T} < +\infty. 
\end{equation}
Then, there exists a superposition measure $\Beta_{\mu} \in \Pcal(\R^d \times \Sigma_T)$ in the sense of Definition \ref{def:SuperpositionMeas} such that $(\esf_t)_{\sharp} \Beta_{\mu} = \mu(t)$ for all times $t \in [\tau,T]$. 
\end{thm}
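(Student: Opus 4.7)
The plan is to follow the classical regularization strategy adapted to the low-regularity framework at hand. I would construct a family of smooth approximations by convolving both $\mu(t)$ and the momentum measure $v(t)\mu(t)$ with a spatial mollifier $\rho_\epsilon$, setting
\[
\mu_\epsilon(t) := \mu(t) * \rho_\epsilon, \qquad v_\epsilon(t,x) := \frac{((v(t)\mu(t)) * \rho_\epsilon)(x)}{\mu_\epsilon(t)(x)},
\]
where $\mu_\epsilon(t)$ is identified with its smooth positive density. A direct computation shows that $\mu_\epsilon(\cdot)$ solves the continuity equation driven by $v_\epsilon(\cdot,\cdot)$, and each $v_\epsilon$ is smooth and locally Lipschitz in $x$, hence admits a classical flow $\Phi^{v_\epsilon}_{(0,\cdot)}$ in the sense of Definition \ref{def:Flows}. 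Setting
\[
\Beta_\epsilon := \bigl( \Id \, , \, \Phi^{v_\epsilon}_{(0,\cdot)}(\cdot) \bigr)_\sharp \, \mu_\epsilon(0) \in \Pcal(\R^d \times \Sigma_T),
\]
one immediately checks that $\Beta_\epsilon$ is concentrated on integral curves of $v_\epsilon$ and satisfies $(\esf_t)_\sharp \Beta_\epsilon = \mu_\epsilon(t)$ for every $t \in [0,T]$.

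The second step consists in proving that the family $(\Beta_\epsilon)_{\epsilon > 0}$ is tight in $\Pcal(\R^d \times \Sigma_T)$. Tightness of the space marginals $\mu_\epsilon(0)$ follows from the convergence $\mu_\epsilon(0) \to \mu^0$ in $\Pcal_p(\R^d)$. For the path component, the integrability assumption \eqref{eq:SuperpositionIntegrability} plays the key role: combining $\dot\sigma(t) = v_\epsilon(t,\sigma(t))$ with the very definition of $v_\epsilon$ and Jensen's inequality, I would show that the functional $(x,\sigma) \mapsto \int_0^T |\dot\sigma(t)|/(1+|\sigma(t)|) \, \dn t$ has uniformly bounded $\Beta_\epsilon$-integral. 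Together with a Gronwall-type argument to control $\sup_t |\sigma(t)|$ by $|x|$, this produces equi-integrability and equi-continuity of the curves in the support of $\Beta_\epsilon$. Prokhorov's theorem then delivers a narrow cluster point $\Beta_\mu$ along a subsequence, and the pushforward identity $(\esf_t)_\sharp \Beta_\mu = \mu(t)$ is obtained by passing to the limit in $(\esf_t)_\sharp \Beta_\epsilon = \mu_\epsilon(t)$, using continuity of $\esf_t$ together with the narrow convergence $\mu_\epsilon(t) \rightharpoonup^* \mu(t)$ for every $t \in [0,T]$.

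The remaining, and technically most delicate, step is to prove that $\Beta_\mu$ is concentrated on absolutely continuous integral curves of the original velocity field $v$. The natural functional to control is
\[
F(x,\sigma) := \sup_{t \in [0,T]} \Big| \sigma(t) - x - \int_0^t v(s,\sigma(s)) \, \dn s \Big|,
\]
but $F$ is neither lower semicontinuous nor even well defined on a generic continuous curve when $v$ is merely Borel. The standard remedy is to fix a countable family $(v^\eta)$ of bounded continuous approximations of $v$, define the truncated continuous variants $F_\eta(x,\sigma) := \sup_t \min\{1, |\sigma(t) - x - \int_0^t v^\eta(s,\sigma(s)) \dn s|\}$, and pass to the narrow limit in $\int F_\eta \, \dn \Beta_\epsilon$. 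The main obstacle is then to absorb the discrepancy $\int_0^T |v^\eta(s,\sigma(s)) - v_\epsilon(s,\sigma(s))| \, \dn s$ via \eqref{eq:SuperpositionIntegrability}, pulling back the path integrals to spatial integrals against $\mu_\epsilon(t)$ and exploiting the weighted bound $|v|/(1+|\cdot|) \in L^1(\dn t \otimes \dn \mu(t))$. A diagonal extraction $\epsilon \to 0$ followed by $\eta \to 0$ should then yield $\int F_\eta \, \dn \Beta_\mu \to 0$, forcing $\Beta_\mu$-almost every trajectory to satisfy \eqref{eq:SuperpositionMeasureDef} and concluding the proof.
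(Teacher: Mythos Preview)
The paper does not actually prove Theorem~\ref{thm:Superposition}; it is stated as a known result with a pointer to \cite[Theorem 3.4]{AmbrosioC2014} (see the sentence immediately preceding the statement). So there is no proof in the paper to compare against---the authors treat the superposition principle as a black box drawn from the existing literature on transport equations.

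That said, your sketch is a faithful outline of the standard proof strategy found in that literature (see also \cite[Theorem 8.2.1]{AGS}): regularise by spatial convolution, represent the smooth approximations via classical flows, extract a narrow limit by tightness, and finally verify that the limit is concentrated on integral curves of the original field. Two points deserve care if you flesh this out. First, ``$v_\epsilon$ smooth hence admits a classical flow'' is not automatic: smoothness in $x$ does not by itself give global-in-time existence of $\Phi^{v_\epsilon}_{(0,t)}$, and the weighted integrability \eqref{eq:SuperpositionIntegrability} only controls $v_\epsilon$ in an averaged sense against $\mu_\epsilon$. The usual fix is either to truncate $v_\epsilon$ outside large balls or to work directly with the measure $\Beta_\epsilon$ without invoking a globally defined flow. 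Second, in the final step the discrepancy term you want to absorb is really $\int_0^T\int |v^\eta - v| \, \dn\mu_\epsilon(t)\,\dn t$ (after pulling back), and passing $\epsilon \to 0$ there requires a Vitali-type uniform integrability argument rather than a direct application of \eqref{eq:SuperpositionIntegrability}; the cited references handle this by restricting first to bounded continuous $v^\eta$ with compact support, for which strong $L^1$ convergence suffices.
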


Throughout the remainder of this section, we derive a series of useful moment and stability estimates for solutions of \eqref{eq:CE} driven by Lebesgue-Borel velocity fields $v : [0,T] \times \R^d \to \R^d$, under the additional assumption that there exists a map $m(\cdot) \in L^1([0,T],\R_+)$ such that 
\begin{equation}
\label{eq:MeasureSublin}
|v(t,x)| \leq m(t)(1+|x|) 
\end{equation}
holds for $\Lcal^1$-almost every $t \in [0,T]$ and $\mu(t)$-almost every $x \in \R^d$. While some of the underlying techniques were already explored in our previous work \cite{ContInc}, the results below are new, proven under less restrictive regularity assumptions, and requested several additional arguments. We start by stating an adaptation of a result established in \cite[Lemma 1]{ContInc}, whose proof\footnote{In \cite{ContInc}, the proof of Lemma 1 contains a small caveat in the definition of the coercive functional used to establish the relative narrow compactness of a relevant sequence of superposition measures. Nevertheless, the conclusion remains correct up to setting the latter to $+\infty$ outside the closed set of pairs $(x,\sigma) \in \R^d \times \Sigma_T$ satisfying $|\dot \sigma(t)| \leq m(t)(1+|\sigma(t)|)$.} follows straightforwardly from the latter up to some minor technical adjustments. 

\begin{lem}[Superposition plans inducing optimal transports]
\label{lem:SuperpositionPlan}
Consider two curves of measures $\mu(\cdot),\nu(\cdot) \in \AC([\tau,T],\Pcal_p(\R^d))$ driven by the Lebesgue-Borel velocity fields $v,w : [0,T] \times \R^d \to \R^d$ complying with the pointwise sublinearity estimates 
\begin{equation*}
|v(t,x)| \leq m(t)(1+|x|) \qquad \text{and} \qquad |w(t,y)| \leq m(t)(1+|y|)
\end{equation*}
for $\Lcal^1$-almost every $t \in [\tau,T]$ and $\mu(t) \times \nu(t)$-almost every $(x,y) \in \R^d \times \R^d$. Moreover, let $\Beta_{\mu},\Beta_{\nu} \in \Pcal(\R^d \times \Sigma_T)$ be two superposition measures given by Theorem \ref{thm:Superposition}, such that
\begin{equation*}
\mu(t) = (\esf_t)_{\sharp} \Beta_{\mu} \qquad \text{and} \qquad \nu(t) = (\esf_t)_{\sharp} \Beta_{\nu}
\end{equation*}
for all times $t \in [\tau,T]$. Then, for every $\gamma_{\tau} \in \Gamma_o(\mu(\tau),\nu(\tau))$, there exists $\hat{\Beta}_{\mu,\nu} \in \Gamma(\Beta_{\mu},\Beta_{\nu})$ such that 
\begin{equation*}
(\pi_{\R^d},\pi_{\R^d})_{\sharp} \hat{\Beta}_{\mu,\nu} = \gamma_{\tau} \qquad \text{and} \qquad (\esf_t , \esf_t)_{\sharp} \hat{\Beta}_{\mu,\nu} \in \Gamma_o(\mu(t),\nu(t)) 
\end{equation*}
for all times $t \in [\tau,T]$.
\end{lem}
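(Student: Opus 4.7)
The plan is to construct $\hat{\Beta}_{\mu,\nu}$ as a narrow limit of couplings defined over a countable dense set of time slices, where each approximate coupling is forced by construction to exhibit a prescribed optimal transport plan at each given time. Two preliminary ingredients support this strategy. First, the set $\Gamma(\Beta_\mu,\Beta_\nu)$ is tight — hence narrowly relatively compact in $\Pcal((\R^d\times\Sigma_T)^2)$ — since its marginals are fixed and the sublinear bound on $v,w$ together with $\mu^0,\nu^0\in\Pcal_p(\R^d)$ yield uniform $p$-moment bounds on the trajectories supported by $\Beta_\mu,\Beta_\nu$. Second, for every $t\in[0,T]$, the set $\Gamma_o(\mu(t),\nu(t))$ is non-empty, and narrowly closed with respect to $W_p$-convergence of the marginals, which will hold here thanks to the absolute continuity of $\mu(\cdot),\nu(\cdot)$.

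The core construction runs as follows. I would fix a dense countable subset $D=\{t_n\}_{n\geq 0}\subset[0,T]$ with $t_0=0$, and for each $n\geq 1$ select some $\gamma_{t_n}\in\Gamma_o(\mu(t_n),\nu(t_n))$, setting $\gamma_{t_0}:=\gamma^0$. For each $N\geq 0$, disintegrate $\Beta_\mu$ with respect to the finite-dimensional evaluation map
\[
\Pi_N^v:(x,\sigma)\in\R^d\times\Sigma_T\,\longmapsto\,(\sigma(t_0),\ldots,\sigma(t_N))\in(\R^d)^{N+1},
\]
and likewise $\Beta_\nu$ through the analogous $\Pi_N^w$. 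By an iterated application of the gluing lemma along the partition points, I would then build a probability measure $\Gamma^N\in\Pcal((\R^d)^{N+1}\times(\R^d)^{N+1})$ whose marginals coincide with $(\Pi_N^v)_\sharp\Beta_\mu$ and $(\Pi_N^w)_\sharp\Beta_\nu$, and whose projection on the $k$-th pair of coordinates is $\gamma_{t_k}$ for every $0\leq k\leq N$; consistency of this gluing rests on the fact that the $k$-th factor marginal of $(\Pi_N^v)_\sharp\Beta_\mu$ is $\mu(t_k)$, which also serves as the first marginal of $\gamma_{t_k}$. Mixing the disintegration kernels against $\Gamma^N$ produces a coupling $\hat{\Beta}^N_{\mu,\nu}\in\Gamma(\Beta_\mu,\Beta_\nu)$ satisfying $(\esf_{t_k},\esf_{t_k})_\sharp\hat{\Beta}^N_{\mu,\nu}=\gamma_{t_k}$ for all $k\leq N$ by construction.

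To conclude, I would extract via tightness a narrowly convergent subsequence with limit $\hat{\Beta}_{\mu,\nu}\in\Gamma(\Beta_\mu,\Beta_\nu)$; a diagonal argument on $D$ then yields $(\esf_{t_n},\esf_{t_n})_\sharp\hat{\Beta}_{\mu,\nu}=\gamma_{t_n}$ for every $n\geq 0$, and the $n=0$ identity delivers the initial-time condition $(\pi_{\R^d},\pi_{\R^d})_\sharp\hat{\Beta}_{\mu,\nu}=\gamma^0$. For a general $t\in[0,T]$, taking $t_n\to t$, the continuity of trajectories combined with the uniform moment control grants the narrow convergence $(\esf_{t_n},\esf_{t_n})_\sharp\hat{\Beta}_{\mu,\nu}\to(\esf_t,\esf_t)_\sharp\hat{\Beta}_{\mu,\nu}$, while the sequence $\gamma_{t_n}$ clusters along a further subsequence at some element of $\Gamma_o(\mu(t),\nu(t))$ by the closedness recalled above; matching the two limits yields the desired optimality. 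The principal obstacle lies in the iterated gluing step: the temporal correlations already frozen in $(\Pi_N^v)_\sharp\Beta_\mu$ and $(\Pi_N^w)_\sharp\Beta_\nu$ must be reconciled with the imposition of $\gamma_{t_k}$ on every $(k,k)$-pair simultaneously, which forces $\Gamma^N$ to be assembled iteratively in $k$ by gluing each new time slice onto the existing partial coupling through the disintegrations with respect to the previously fixed values at $t_0,\ldots,t_{k-1}$ — a delicate bookkeeping of marginal compatibility that will carry the bulk of the technical work.
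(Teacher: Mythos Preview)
Your proposal is correct and follows essentially the same route as the paper: construct couplings $\hat{\Beta}^N_{\mu,\nu}\in\Gamma(\Beta_\mu,\Beta_\nu)$ that are optimal at finitely many dense times via iterated disintegration and gluing, extract a narrow cluster point by tightness of $\Gamma(\Beta_\mu,\Beta_\nu)$, and extend optimality to every $t\in[0,T]$ using the density of $\{t_k\}$, the continuity of $(\esf_t,\esf_t)$ along trajectories, and the narrow stability of $\Gamma_o$. The only notable discrepancy is that the paper asks merely for $(\esf_{t_k},\esf_{t_k})_\sharp\hat{\Beta}^N_{\mu,\nu}\in\Gamma_o(\mu(t_k),\nu(t_k))$ rather than equality with a preselected $\gamma_{t_k}$; this slightly relaxes the multimarginal compatibility you rightly flag as the delicate point, and is all that the limiting argument actually needs.
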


In what follows, we leverage the general superposition results of Theorem \ref{thm:Superposition} and Lemma \ref{lem:SuperpositionPlan} to prove moment and equi-integrability inequalities for solutions of \eqref{eq:CE} in Proposition \ref{prop:Moment}, as well as two stability estimates with respect to initial data and driving fields in Proposition \ref{prop:Gronwall}. For the sake of readability, we postpone the proof of these results to Appendices \ref{section:AppendixMoment} and \ref{section:AppendixGronwall} respectively.

\begin{prop}[Moment and equi-integrability estimates]
\label{prop:Moment}
Let $(\tau,\mu_{\tau}) \in \Pcal_p(\R^d)$ be given and $\mu(\cdot) \in \AC([\tau,T],\Pcal_p(\R^d))$ be a solution of \eqref{eq:CE} driven by a Lebesgue-Borel velocity field $v : [0,T] \times \R^d \rightarrow \R^d$ satisfying the sublinearity estimate \eqref{eq:MeasureSublin}. Then, the following moment bound
\begin{equation}
\label{eq:MomentEst}
\Mpazo_p(\mu(t)) \leq C_p \bigg( \Mpazo_p(\mu_{\tau}) + \INTSeg{m(s)}{s}{\tau}{t} \bigg) \exp \Big( C_p' \NormL{m(\cdot)}{1}{[\tau,t]}^p \hspace{-0.05cm} \Big),
\end{equation}
holds for all times $t \in [\tau,T]$, where the constants $C_p,C_p' > 0$ are given explicitly by 
\begin{equation}
\label{eq:ConstantDef}
 C_p := 2^{(p-1)/p} \qquad \text{and} \qquad C_p' := \tfrac{2^{p-1}}{p}.
\end{equation}
In addition for every $R > 0$, the following uniform equi-integrability estimate
\begin{equation}
\label{eq:UnifIntegEst}
\sup_{t \in [\tau,T]} \INTDom{|x|^p \, }{\big\{ x ~ \textnormal{s.t.} \; |x| \geq R \big\}}{\mu(t)(x)} \, \leq \, C_T^p \INTDom{\big( 1+|x| \big)^p \,}{\big\{ x ~ \textnormal{s.t.} \; |x| \geq R/C_T-1 \big\}}{\mu_{\tau}(x)}, 
\end{equation}
holds with $C_T := \max \big\{ 1 , \Norm{m(\cdot)}_1 \hspace{-0.1cm} \big\} \exp \big( \hspace{-0.1cm} \Norm{m(\cdot)}_1 \hspace{-0.1cm} \big)$.  
\end{prop}

\begin{rmk}[A refined moment inequality]
\label{rmk:Moment}
In the sequel, we will often use the fact that, if a velocity field $v : [0,T] \times \R^d \to \R^d$ satisfies a slightly more general sublinearity inequality of the form
\begin{equation*}
|v(t,x)| \leq m(t) \Big( 1 + |x| + M(t) \Big),  
\end{equation*}
for $\Lcal^1$-almost every $t \in [0,T]$ and all $x \in \R^d$, where $M(\cdot) \in L^{\infty}([0,T],\R_+)$ is a priori given, then the corresponding curve of measures $\mu(\cdot) \in \AC([\tau,T],\Pcal_p(\R^d))$ is such that 
\begin{equation*}
\Mpazo_p(\mu(t)) \leq C_p \bigg( \Mpazo_p(\mu_{\tau}) + \INTSeg{m(s)(1+M(s))}{s}{\tau}{t} \bigg) \exp \Big( C_p' \Norm{m(\cdot)}_{L^1([\tau,t])}^p \hspace{-0.05cm} \Big), 
\end{equation*}
for all times $t \in [\tau,T]$. The latter inequality can be established by repeating verbatim the arguments detailed in Appendix \ref{section:AppendixMoment} while replacing $m(\cdot)$ by $m(\cdot)(1+M(\cdot))$.
\end{rmk}

\begin{prop}[Two general stability estimates for continuity equations]
\label{prop:Gronwall}
Let $\mu_{\tau},\nu_{\tau} \in \Pcal_p(\R^d)$ and $\mu(\cdot),\nu(\cdot) \in \AC([\tau,T],\Pcal_p(\R^d))$ be two solutions of \eqref{eq:CE}, driven respectively by a Carathéodory velocity field $v : [0,T] \times \R^d \to \R^d$ satisfying Hypotheses \ref{hyp:CE}, and by a Lebesgue-Borel velocity field $w : [0,T] \times \R^d \to \R^d$ complying with the pointwise sublinearity estimate
\begin{equation}
\label{eq:MeasureSublinBis}
|w(t,y)| \leq m(t) \big( 1+|y| \big), 
\end{equation}
for $\Lcal^1$-almost every $t \in [0,T]$ and $\nu(t)$-almost every $y \in \R^d$. Then, if the map defined by 
\begin{equation*}
t \in [\tau,T] \mapsto \NormL{v(t) - w(t)}{\infty}{\R^d,\R^d ; \, \nu(t)}
\end{equation*} 
is Lebesgue integrable, the following global stability estimate
\begin{equation}
\label{eq:Gronwall1}
W_p(\mu(t),\nu(t)) \leq C_p \bigg( W_p(\mu_{\tau},\nu_{\tau}) + \INTSeg{\NormL{v(s) - w(s)}{\infty}{\R^d,\R^d ; \, \nu(s)}}{s}{\tau}{t} \bigg) \exp \Big( C_p' \NormL{l(\cdot)}{1}{[\tau,t]}^p \hspace{-0.05cm} \Big),
\end{equation}
holds for all times $t \in [\tau,T]$, wherein $C_p,C_p'>0$ are defined in \eqref{eq:ConstantDef}. More generally, under our assumptions, the application 
\begin{equation*}
t \in [\tau,T] \mapsto \NormL{v(t) - w(t)}{\infty}{B(0,R),\R^d ;\, \nu(t)},
\end{equation*}
is Lebesgue integrable for every $R >0$, and the following localised stability estimate 
\begin{equation}
\label{eq:Gronwall2}
\begin{aligned}
W_p(\mu(t),\nu(t)) & \leq C_p \Bigg( W_p(\mu_{\tau},\nu_{\tau}) + \INTSeg{\NormL{v(s) - w(s)}{\infty}{B(0,R),\R^d ;\, \nu(s)}}{s}{\tau}{t} + \Epazo_{\nu}(\tau,t,R) \Bigg) \\
& \hspace{8.5cm} \times \exp \Big( C_p' \NormL{l(\cdot)}{1}{[\tau,t]}^p \hspace{-0.05cm} \Big),
\end{aligned}
\end{equation}
holds for all times $t \in [\tau,T]$, where the additional error term is given by  
\begin{equation}
\label{eq:ErrorTerm}
\Epazo_{\nu}(\tau,t,R) := 2 \NormL{m(\cdot)}{1}{[\tau,t]} (1+C_T) \bigg( \INTDom{(1+|y|)^p}{\{ y \; \textnormal{s.t.}\; |y| \geq R/C_T-1 \}}{\nu_{\tau}(y)} \bigg)^{1/p}.
\end{equation}
with $C_T := \max\{ 1,\Norm{m(\cdot)}_1 \} \exp(\Norm{m(\cdot)}_1)$. 
\end{prop}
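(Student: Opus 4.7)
The plan is to lift the stability question to the level of characteristic trajectories via the superposition principle, and then run an $L^p$-Gronwall argument along those trajectories. Since $v$ and $w$ both satisfy the sublinear growth $m(\cdot)(1+|\cdot|)$ with $m(\cdot) \in L^1([0,T],\R_+)$, the local integrability hypothesis \eqref{eq:SuperpositionIntegrability} of Theorem \ref{thm:Superposition} is automatically met along $\mu(\cdot)$ and $\nu(\cdot)$, yielding superposition measures $\Beta_\mu, \Beta_\nu \in \Pcal(\R^d \times \Sigma_T)$. Fixing any $\gamma^0 \in \Gamma_o(\mu^0,\nu^0)$, Lemma \ref{lem:SuperpositionPlan} then delivers a coupling $\hat{\Beta}_{\mu,\nu} \in \Gamma(\Beta_\mu,\Beta_\nu)$ whose time-$t$ evaluation $(\esf_t,\esf_t)_{\sharp} \hat{\Beta}_{\mu,\nu}$ remains $p$-optimal between $\mu(t)$ and $\nu(t)$ for every $t \in [0,T]$.

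For the global bound \eqref{eq:Gronwall1}, I would start from the $\hat{\Beta}_{\mu,\nu}$-a.e. characteristic inequality
\[
|\sigma(t)-\tau(t)| \leq |x-y| + \int_0^t \bigl( l(s)\,|\sigma(s)-\tau(s)| + |v(s,\tau(s))-w(s,\tau(s))| \bigr)\,\dn s,
\]
obtained by integrating the defining ODEs of Definition \ref{def:SuperpositionMeas}, adding and subtracting $v(s,\tau(s))$, and invoking the Lipschitz bound \ref{hyp:CE}-$(ii)$ on $v$. Raising to the $p$-th power via $(a+b)^p \leq 2^{p-1}(a^p+b^p)$ combined with Hölder's inequality on the time integrals -- precisely in the spirit of the proof of Proposition \ref{prop:Momentum} -- generates the constants $C_p$ and $C_p'$. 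Integrating against $\hat{\Beta}_{\mu,\nu}$, using optimality of the marginals to identify $W_p^p(\mu(t),\nu(t))$ and $W_p^p(\mu^0,\nu^0)$, and using that the law of $\tau(s)$ under $\hat{\Beta}_{\mu,\nu}$ is $\nu(s)$ so that $|v(s,\tau(s))-w(s,\tau(s))|$ is controlled in $L^p(\hat{\Beta}_{\mu,\nu})$ by $\|v(s)-w(s)\|_{L^\infty(\R^d,\R^d;\,\nu(s))}$, reduces the task to a standard integral Gronwall inequality for $t \mapsto W_p(\mu(t),\nu(t))^p$.

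The localised estimate \eqref{eq:Gronwall2} proceeds along the same lines, but with an additional splitting of $|v(s,\tau(s))-w(s,\tau(s))|$ according to whether $\tau(s) \in B(0,R)$ or not. On the event $\{\tau(s) \in B(0,R)\}$ the localised $L^\infty(B(0,R);\nu(s))$-norm applies directly, while on its complement the joint sublinearity of $v$ and $w$ yields the pointwise majoration $|v(s,\tau(s))-w(s,\tau(s))| \leq 2m(s)(1+|\tau(s)|)$. The resulting tail contribution in $L^p(\nu(s))$ is controlled via the uniform equi-integrability bound \eqref{eq:UnifIntegEst} of Proposition \ref{prop:Momentum}, which trades the mass of $\nu(s)$ outside $B(0,R)$ for the mass of $\nu^0$ outside $\{|y| \geq R/C_T-1\}$, producing precisely the error $\Epazo_{\nu}(t,R)$ of \eqref{eq:ErrorTerm}.

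The main technical obstacle is verifying the Lebesgue measurability of $s \mapsto \|v(s)-w(s)\|_{L^\infty(B(0,R);\nu(s))}$ together with its global analogue: although each is pointwise bounded (by $2m(s)(1+R)$ on $B(0,R)$), the $\nu(s)$-essential supremum moves with $s$, so measurability is not automatic. I would handle this by approximating the essential supremum as the monotone limit of $L^q(\nu(s))$-norms as $q \to \infty$ and invoking a Fubini-type argument based on the Carathéodory regularity of $v$ and the narrow continuity of $s \mapsto \nu(s)$, or alternatively by appealing to the measurable selection principles of Lemma \ref{lem:MeasurableSel}. Once measurability is secured, Lebesgue integrability on bounded balls follows for free from the pointwise majoration, while the global integrability required by \eqref{eq:Gronwall1} is built into the hypotheses.
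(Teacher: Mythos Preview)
Your proposal is correct and follows essentially the same route as the paper: lift via the superposition principle, couple through Lemma~\ref{lem:SuperpositionPlan}, run an $L^p$-Gr\"onwall along characteristics, and for the localised bound split off the tail contribution and trade it against the tail of $\nu^0$ through the trajectory bound $|\sigma_\nu(s)| \leq C_T(1+|y|)$. The only cosmetic difference is that the paper performs the splitting at the level of the $\Beta_\nu$-integral according to whether $\|\sigma_\nu(\cdot)\|_{C^0([0,T],\R^d)} \leq R$ or not, whereas you split the time-integrand pointwise on $\{|\tau(s)| \leq R\}$ and then invoke the equi-integrability bound \eqref{eq:UnifIntegEst}; both mechanisms rest on the same Gr\"onwall estimate for $|\sigma_\nu(s)|$ and produce exactly the error $\Epazo_\nu(t,R)$ of \eqref{eq:ErrorTerm}. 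Your remark on the measurability of $s \mapsto \|v(s)-w(s)\|_{L^\infty(B(0,R);\nu(s))}$ is well taken---the paper asserts this integrability without comment, and your monotone-$L^q$ approximation is a reasonable way to justify it.
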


\begin{rmk}[Comparison with the estimates of \cite{ContInc}]
The stability estimates displayed in Proposition \ref{prop:Gronwall} above improve on those of \cite{ContInc} in the two following ways. Firstly, the global inequality \eqref{eq:Gronwall1} now holds for a general Lebesgue-Borel velocity field $w : [0,T] \times \R^d \to \R^d$ satisfying the sublinearity inequality \eqref{eq:MeasureSublin}, without requiring the latter to be Carathéodory. Secondly, the inequality \eqref{eq:Gronwall2} involving the quantitative error term $\Epazo_{\nu}(\tau,t,R)$ is completely new to the best of our knowledge, and will prove crucial to palliate the fact that the metric of compact convergence $\dcc(\cdot,\cdot)$ only grants access to local and non-uniform discrepancy estimates between functions.   
\end{rmk}

We end this preliminary section by recalling the definition of \textit{continuity inclusions} in Wasserstein spaces. The latter was introduced by the authors of the present manuscript in \cite{ContInc} as a natural set-valued generalisation of continuity equations. Indeed, its definition adopts the well-established viewpoint that solutions of differential inclusions should be understood as absolutely continuous curves whose derivatives are measurable selections of admissible velocities, see e.g. \cite[Chapter 10]{Aubin1990}.

\begin{Def}[Continuity inclusions in $(\Pcal_p(\R^d),W_p(\cdot,\cdot))$]
\label{def:ContInc}
Let $V : [0,T] \times \Pcal_p(\R^d) \tto C^0(\R^d,\R^d)$ be a set-valued map. We say that a curve $\mu(\cdot) \in \AC([0,T],\Pcal_p(\R^d))$ solves the \textnormal{continuity inclusion}
\begin{equation}
\label{eq:ContIncDef}
\partial_t \mu(t) \in - \Div_x \Big( V(t,\mu(t)) \mu(t) \Big)
\end{equation}
if there exists an $\Lcal^1$-measurable selection $t \in [0,T] \mapsto v(t) \in V(t,\mu(t)) \subset C^0(\R^d,\R^d)$ such that the \textnormal{trajectory-selection pair} $(\mu(\cdot),v(\cdot))$ is a solution of the continuity equation
\begin{equation*}
\partial_t \mu(t) + \Div_x (v(t)\mu(t)) = 0, 
\end{equation*}
in the sense of distributions. 
\end{Def}


\section{Existence à la Peano in the Carathéodory framework}
\label{section:Peano}
\setcounter{equation}{0} \renewcommand{\theequation}{\thesection.\arabic{equation}}

In this section, we establish a general existence result for set-valued Cauchy problems of the form
\begin{equation}
\label{eq:ContIncCarathéodory}
\left\{
\begin{aligned}
& \partial_t \mu(t) \in - \Div_x \Big( V(t,\mu(t)) \mu(t) \Big), \\ 
& \mu(0) = \mu^0, 
\end{aligned}
\right.
\end{equation}
formulated in the Wasserstein space $(\Pcal_p(\R^d),W_p(\cdot,\cdot))$, under the Carathéodory regularity assumptions listed below. Our strategy is based on a careful adaptation of the semi-discrete Euler scheme due to Filippov for differential equations, see e.g. \cite[Chapter 1 -- Theorem 1]{Filippov2013}. To the best of our knowledge, this approach is completely new, as it does not seem to have been investigated for differential inclusions even in finite-dimensional vector spaces. In the context of measure dynamics, the latter can also be seen as a kind of relative to the methods developed e.g. in \cite{CavagnariSS2022,Piccoli2019,Pedestrian}. We would also like to stress that our proof strategy greatly improves on several related compactness arguments explored in the literature of meanfield control, see e.g. \cite{ContInc,Fornasier2014}, thanks to the weak $L^1$-compactness criterion of Theorem \ref{thm:L1WeakCompactness}. 



\begin{taggedhyp}{\textbn{(P)}}
\label{hyp:CIPC}
\hfill
\begin{enumerate}
\item[$(i)$] The set-valued map $(t,\mu) \in [0,T] \times \Pcal_p(\R^d) \tto V(t,\mu) \subset C^0(\R^d,\R^d)$ is Carathéodory with nonempty closed and convex images.
\item[$(ii)$] There exists a map $m(\cdot) \in L^1([0,T],\R_+)$ such that for $\Lcal^1$-almost every $t \in [0,T]$, any $\mu \in \Pcal_p(\R^d)$ and each $v \in V(t,\mu)$, there holds 
\begin{equation*}
|v(x)| \leq m(t) \Big(1 + |x| + \Mpazo_p(\mu) \Big).
\end{equation*}
for all $x \in \R^d$.
\item[$(iii)$] For all compact sets $\Kcal \subset \Pcal_p(\R^d)$ and $K \subset \R^d$, there exist a map $l(\cdot) \in L^1([0,T],\R_+)$ and a continuous modulus of continuity $\omega : \R_+ \to \R_+$ such that for $\Lcal^1$-almost every $t \in [0,T]$, any $\mu \in \Kcal$ and each $v \in V(t,\mu)$, there holds 
\begin{equation*}
|v(t,x) - v(t,y)| \leq l(t) \omega(|x-y|)
\end{equation*}
for all $x,y \in K$.
\item[$(iv)$] For every compact set $\Kcal \subset \Pcal_p(\R^d)$, there exists a map $L(\cdot) \in L^1([0,T],\R_+)$ and a continuous modulus of continuity $\omega : \R_+ \to \R_+$ such that for $\Lcal^1$-almost every $t \in [0,T]$, any $\mu,\nu \in \Kcal$ and each $v \in V(t,\mu)$, there exists $w \in V(t,\nu)$ such that 
\begin{equation*}
\dcc(v,w) \leq L(t) \, \omega(W_p(\mu,\nu)).
\end{equation*}
\end{enumerate}
\end{taggedhyp}

Two remarks are in order concerning the previous set of assumptions. First, we stress that the maps $l(\cdot),L(\cdot) \in L^1([0,T],\R_+)$ and the local continuity moduli $\omega : \R_+ \to \R_+$ appearing in Hypotheses \ref{hyp:CIPC}-$(iii)$ and $(iv)$ are all localised on compact sets. That being said, we did not make this dependence transparent so as to keep the notations as light as possible. Second, as mentioned in the introduction, it is a well-known fact in the theory of differential inclusions that outside of the Cauchy-Lipschitz framework, one must in general impose a convexity assumption on the admissible velocities to establish the existence of solutions, even in finite-dimensional euclidean spaces (see e.g. \cite[Chapter 2]{Aubin1984}). 

\begin{rmk}[Examples of set-valued mapping satisfying our assumptions]
A relevant example of set-valued map $V : [0,T] \times \Pcal_p(\R^d) \tto C^0(\R^d,\R^d)$ satisfying Hypotheses \ref{hyp:CIPC} is given by the set of admissible velocities of a controlled system of the form
\begin{equation*}
V(t,\mu) := \Big\{ v(t,\mu,u) \in C^0(\R^d,\R^d) ~\, \textnormal{s.t.}~ u \in U \Big\}. 
\end{equation*}
Therein, $(U,d_U(\cdot,\cdot))$ is a compact metric space representing admissible control inputs, while $v : [0,T] \times \Pcal_p(\R^d) \times U \times \R^d \to \R^d$ is a vector field that is Carathéodory in $(t,u) \in [0,T] \times U$ as well as locally uniformly continuous in $(\mu,x) \in \Pcal_p(\R^d) \times \R^d$ with constants that are Lebesgue integrable functions of time. In the simpler case in which the maps $(\mu,x) \in \Pcal_p(\R^d) \times \R^d \mapsto v(t,\mu,u,x) \in \R^d$ are globally Lipschitz with constants given by Lebesgue integrable functions of time, this control theoretic model also fits Hypotheses \ref{hyp:CICL} of Section \ref{section:Filippov}.
\end{rmk} 


In what follows, we state and prove the main result of this section which provides the existence of solutions to \eqref{eq:ContIncCarathéodory} under Hypotheses \ref{hyp:CIPC}. 

\begin{thm}[Existence à la Peano for continuity inclusions]
\label{thm:Peano}
Let $V : [0,T] \times \Pcal_p(\R^d) \tto C^0(\R^d,\R^d)$ be a set-valued map satisfying Hypotheses \ref{hyp:CIPC}. Then for every $\mu^0 \in \Pcal_p(\R^d)$, the Cauchy problem
\begin{equation*}
\left\{
\begin{aligned}
& \partial_t \mu(t) \in - \Div_x \Big( V(t,\mu(t)) \mu(t) \Big), \\
& \mu(0) = \mu^0,
\end{aligned}
\right.
\end{equation*}
admits at least one solution $\mu(\cdot) \in \AC([0,T],\Pcal_p(\R^d))$.
\end{thm}

The proof of Theorem \ref{thm:Peano} follows a constructive scheme which is split into five steps. We start in Step 1 by constructing a sequence of trajectory-selection pairs solving continuity equations with delayed velocity inclusions, and proceed by showing in Step 2 that the elements of the latter comply with uniform moment, regularity and equi-integrability bounds. In Step 3, we then prove that such estimates imply the existence of suitable weak cluster points for the sequence of trajectory-selection pairs. To conclude the proof, we further establish in Step 4 that the limit curve of measures solves a continuity equation driven by the corresponding velocity selection, and finally prove in Step 5 that the latter is in turn a measurable selection in the set of admissible velocities.

\begin{proof}[Proof of Theorem \ref{thm:Peano}]
In what follows, our goal is to build a sequence of trajectory-selection pairs $(\mu_n(\cdot),v_n(\cdot)) \in \AC([0,T],\Pcal_p(\R^d)) \times \Lcal([0,T],C^0(\R^d,\R^d))$ solutions of the Cauchy problems
\begin{equation}
\label{eq:PeanoCauchy}
\left\{
\begin{aligned}
& \partial_t \mu_n(t) + \Div_x (v_n(t) \mu_n(t)) = 0, \\
& \mu_n(0) = \mu^0.
\end{aligned}
\right.
\end{equation}
These latter will be chosen so as to satisfy the delayed pointwise velocity inclusion
\begin{equation}
\label{eq:PeanoInclusion}
v_n(t) \in V \Big( t,\mu_n \big(t- \tfrac{T}{n} \big) \Big),
\end{equation}
for $\Lcal^1$-almost every $t \in [0,T]$ -- where here and in what follows we set $\mu_n(t) := \mu^0$ for $t \in [-\tfrac{T}{n},0]$ by convention --, along with the uniform moment and regularity bounds 
\begin{equation}
\label{eq:PeanoBounds}
\Mpazo_p(\mu_n(t)) \leq \Cpazo \qquad \text{and} \qquad W_p(\mu_n(\tau),\mu_n(t)) \leq c_p \INTSeg{m(s)}{s}{\tau}{t}
\end{equation}
for all times $0 \leq \tau \leq t \leq T$ and each $n \geq 1$. Therein, the constants $\Cpazo,c_p > 0$ only depend on the magnitudes of $p,\Mpazo_p(\mu^0)$ and $\Norm{m(\cdot)}_1$.


\paragraph*{Step 1 -- Construction of the sequence.} Given an integer $n \geq 1$, we explicitly build the pair $(\mu_n(\cdot),v_n(\cdot))$ satisfying the aforedescribed conditions by performing an induction on $k \in \{0,\dots,n-1\}$. First, let $I_n^0 := [0,\tfrac{T}{n}]$ and observe that under Hypotheses \ref{hyp:CIPC}-$(i)$ and $(ii)$, the set-valued map 
\begin{equation*}
t \in I_n^0 \tto V(t,\mu^0) \subset C^0(\R^d,\R^d) 
\end{equation*}
is $\Lcal^1$-measurable with nonempty and closed images for the topology of local uniform convergence. Hence, it admits a measurable selection $t \in I_n^0 \mapsto v^0_n(t) \in V(t,\mu^0)$ by Theorem \ref{thm:Selection}, and the underlying Carathéodory velocity field $v^0_n : I_n^0 \times \R^d \mapsto \R^d$ satisfies the sublinearity estimate 
\begin{equation*}
|v^0_n(t,x)| \leq m(t) \Big( 1 + |x| + \Mpazo_p(\mu^0) \Big),
\end{equation*}
for $\Lcal^1$-almost every $t \in I_n^0$ and all $x \in \R^d$, as a consequence of Hypothesis \ref{hyp:CIPC}-$(ii)$ along with Lemma \ref{lem:Carathéodory}. It complies in particular with Hypothesis \ref{hyp:CE}-$(i)$, and by Theorem \ref{thm:Wellposedness} applied on the interval $I := I_n^0$ with $v(t,x) := v^0_n(t,x)$, there exists a solution $\mu_n^0(\cdot) \in \AC(I_n^0,\Pcal_p(\R^d))$ to the Cauchy problem
\begin{equation*}
\left\{
\begin{aligned}
& \partial_t \mu^0_n(t) + \Div_x(v^0_n(t) \mu_n^0(t)) = 0, \\
& \mu_n^0(0) = \mu^0.
\end{aligned}
\right.
\end{equation*}
By repeating this process for $k \in \{1,\dots,n-1\}$, we detail below how one can inductively build a family of trajectory-selection pairs $(\mu_n^k(\cdot),v_n^k(\cdot)) \in \AC(I_k,\Pcal_p(\R^d)) \times \Lcal(I_k,C^0(\R^d,\R^d))$ defined over the family of time intervals $I_n^k := [\tfrac{kT}{n},\tfrac{(k+1)T}{n}]$. 

First, note that for $\Lcal^1$-almost every $t \in [0,T]$, the sets $V(t,\mu^{k-1}_n(t)) \subset C^0(\R^d,\R^d)$ are compact for the topology induced by $\dsf_{cc}(\cdot,\cdot)$ as a consequence of Hypotheses \ref{hyp:CIPC}-$(ii)$ and $(iii)$ combined with the Ascoli-Arzel\`a theorem. Second, under Hypothesis \ref{hyp:CIPC}-$(i)$, it follows from Lemma \ref{lem:MeasurableSel}-$(c)$ that
\begin{equation*}
t \in I_n^k \tto V \Big(t,\mu^{k-1}_n(t-\tfrac{T}{n}) \Big) \subset C^0(\R^d,\R^d)
\end{equation*}
is an $\Lcal^1$-measurable set-valued map, whose images are nonempty. Thence, by what precedes, it admits a measurable selection
\begin{equation}
\label{eq:PeanoInductionVel}
t \in I_n^k \mapsto v^k_n(t) \in V \Big( t , \mu_n^{k-1} \big( t - \tfrac{T}{n} \big)\Big). 
\end{equation}
Besides by Lemma \ref{lem:Carathéodory} and Hypothesis \ref{hyp:CIPC}-$(ii)$, the vector field $v_n^k : I_n^k \times \R^d \to \R^d$ is Carathéodory and satisfies the sublinearity estimate
\begin{equation}
\label{eq:PeanoSublin}
|v_n^k(t,x)| \leq m(t) \Big( 1 + |x| + \Mpazo_p \big( \mu_n^{k-1} \big( t - \tfrac{T}{n} \big) \big) \Big)
\end{equation}
for $\Lcal^1$-almost every $t \in I_n^k$, all $x \in \R^d$ and each $k \in \{1,\dots,n-1\}$. Thus by applying Theorem \ref{thm:Wellposedness} on the time interval $I := I_n^k$ with $v(t,x) := v_n^k(t,x)$, one can subsequently define $\mu_n^k(\cdot) \in \AC(I_n^k,\Pcal_p(\R^d))$ as being one of the solutions of the Cauchy problem
\begin{equation*}
\left\{
\begin{aligned}
& \partial_t \mu_n^k(t) + \Div_x (v_n^k(t)\mu_n^k(t)) = 0, \\
& \mu_n^k (\tfrac{kT}{n}) = \mu_n^{k-1} \big( \tfrac{kT}{n} \big).
\end{aligned}
\right.
\end{equation*}
By classical concatenation properties for solutions of continuity equations (see e.g. \cite[Lemma 4.4]{Dolbeault2009}), the trajectory-selection pair $(\mu_n(\cdot),v_n(\cdot)) \in \AC([0,T],\Pcal_p(\R^d)) \times \Lcal([0,T],C^0(\R^d,\R^d))$ defined by 
\begin{equation}
\label{eq:PeanoGlobalPair}
\mu_n(t) := \mu_n^k(t) \qquad \text{and} \qquad v_n(t) := v_n^k(t), 
\end{equation}
for $t \in I_n^k$ and $k \in \{0,\dots,n-1\}$ solves the Cauchy problem \eqref{eq:PeanoCauchy}. Moreover, it can be checked using \eqref{eq:PeanoInductionVel} that this trajectory-selection pair satisfies the shifted pointwise inclusion \eqref{eq:PeanoInclusion}. 


\paragraph*{Step 2 -- Uniform moment and regularity estimates.} Our goal now is to establish the uniform regularity and moment bounds posited in \eqref{eq:PeanoBounds}. First, notice that as a consequence of the construction detailed in Step 1 and Proposition \ref{prop:Moment}, the curves $(\mu_n(\cdot)) \subset \AC([0,T],\Pcal_p(\R^d))$ satisfy
\begin{equation*}
\sup_{t \in [0,T]} \Mpazo_p(\mu_n(t)) < +\infty 
\end{equation*}
for each $n \geq 1$. This allows us to apply the moment estimate of Remark \ref{rmk:Moment}, which in context writes
\begin{equation}
\label{eq:PeanoInductionMoment}
\Mpazo_p(\mu_n(t)) \leq C_p \bigg( \Mpazo_p(\mu^0) + \INTSeg{m(s) \Big( 1 + \Mpazo_p \big(\mu_n \big( s - \tfrac{T}{n} \big) \big) \Big)}{s}{0}{t} \bigg) \exp \Big( C_p' \NormL{m(\cdot)}{1}{[0,t]}^p \hspace{-0.05cm} \Big),
\end{equation}
for all times $t \in [0,T]$, using again the convention that $\mu_n(t) = \mu^0$ when $t \in [-\tfrac{T}{n},0]$. Defining the map $\hat{m}(\cdot) \in L^1([0,2T],\R_+)$ as
\begin{equation}
\label{eq:hatm}
\hat{m}(t) := \left\{
\begin{aligned}
& m(t) ~~ & \text{if $t \in [0,T]$,} \\
& 0 ~~ & \text{if $t \in [T,2T]$}, 
\end{aligned}
\right.
\end{equation}
one may rewrite the estimate displayed in \eqref{eq:PeanoInductionMoment} as 
\begin{equation*}
\begin{aligned}
\Mpazo_p(\mu_n(t)) & \leq C_p \Bigg( \Mpazo_p(\mu^0) + \INTSeg{m(s)}{s}{0}{t} + \INTSeg{m(s) \Mpazo_p(\mu^0)}{s}{0}{T/n} \\
& \hspace{2.6cm} + \INTSeg{m \big(s + \tfrac{T}{n} \big) \Mpazo_p(\mu_n(s))}{s}{0}{\max \big\{ 0, \, t-T/n \big\}} \Bigg) \exp \Big( C_p' \NormL{m(\cdot)}{1}{[0,t]}^p \Big) \\ 
& \leq C_p \big( 1+\Mpazo_p(\mu^0) \big) \Big( 1 \, + \NormL{m(\cdot)}{1}{[0,t]} \hspace{-0.05cm} \Big) \exp \Big( C_p' \NormL{m(\cdot)}{1}{[0,t]}^p \Big) \\
& \hspace{1.3cm} + C_p \bigg( \INTSeg{\hat{m} \big(s + \tfrac{T}{n} \big) \Mpazo_p(\mu_n(s))}{s}{0}{t} \bigg)  \exp \Big( C_p' \NormL{m(\cdot)}{1}{[0,t]}^p \hspace{-0.05cm} \Big),
\end{aligned}
\end{equation*}
where the first inequality follows from a simple change of variable, while the second one stems from the non-negativity of $\hat{m}(\cdot)$ and $\Mpazo_p(\mu_n(\cdot))$. Thence, a standard application of Gr\"onwall's lemma yields 
\begin{equation*}
\begin{aligned}
\Mpazo_p(\mu_n(t)) & \leq C_p \big( 1 + \Mpazo_p(\mu^0) \big) \Big( 1 \, + \NormL{m(\cdot)}{1}{[0,t]} \hspace{-0.05cm} \Big) \\
& \hspace{1.4cm} \times \exp \Bigg( C_p' \NormL{m(\cdot)}{1}{[0,t]}^p + \, C_p \NormL{\hat{m}(\cdot)}{1}{[0,t+T/n]} \exp \Big( C_p' \NormL{m(\cdot)}{1}{[0,t]}^p \Big) \Bigg)
\end{aligned}
\end{equation*}
for all times $t \in [0,T]$. In turn, upon noticing that
\begin{equation*}
\NormL{\hat{m}(\cdot)}{1}{[0,t+T/n]} ~\leq~ \Norm{m(\cdot)}_1
\end{equation*}
for all times $t \in [0,T]$ and each $n \geq 1$ as a consequence of \eqref{eq:hatm}, there further exists a constant $\Cpazo > 0$ which only depends on the magnitudes of $p,\Mpazo_p(\mu^0)$ and $\Norm{m(\cdot)}_1$, such that 
\begin{equation*}
\sup_{t \in [0,T]} \Mpazo_p(\mu_n(t)) \leq \Cpazo
\end{equation*}
for each $n \geq 1$. In particular, the velocity fields $(v_n(\cdot)) \subset \Lcal([0,T],C^0(\R^d,\R^d))$ satisfy the uniform sublinearity bounds 
\begin{equation}
\label{eq:PeanoUnifSublinSeq}
|v_n(t,x)| \leq (1+\Cpazo) m(t) \big( 1 + |x| \big)
\end{equation}
for $\Lcal^1$-almost every $t \in [0,T]$, all $x \in \R^d$ and each $n \geq 1$. Therefore, by Theorem \ref{thm:Wellposedness}, there exists a constant $c_p > 0$ depending only on the magnitudes of $p,\Mpazo_p(\mu^0)$ and $\Norm{m(\cdot)}_1$ such that 
\begin{equation*}
W_p(\mu_n(\tau),\mu_n(t)) \leq c_p \INTSeg{m(s)}{s}{\tau}{t}
\end{equation*}
for all times $0 \leq \tau \leq t \leq T$. To summarise, we have shown that the sequence of curves $(\mu_n(\cdot)) \subset \AC([0,T],\Pcal_p(\R^d))$ satisfies the uniform moment and absolute continuity estimates \eqref{eq:PeanoBounds}.


\paragraph*{Step 3 -- Relative compactness of the sequence of trajectory-selection pairs.}

Next, we show that the regularity estimates of Step 2 yield the relative compactness of the sequence of pairs $(\mu_n(\cdot),v_n(\cdot)) \subset \AC([0,T],\Pcal_p(\R^d)) \times \Lcal([0,T],C^0(\R^d,\R^d))$ built in Step 1. Indeed, notice first that by invoking the moment inequality \eqref{eq:MomentEst} and the equi-integrability bounds \eqref{eq:UnifIntegEst} of Proposition \ref{prop:Moment}, possibly with different constants $C_T > 0$, one may infer from Proposition \ref{prop:WassDef} the existence of a compact set $\Kcal \subset \Pcal_p(\R^d)$, depending only on $\mu^0$ and $\Norm{m(\cdot)}_1$, such that 
\begin{equation}
\label{eq:PeanoCompactInc}
\mu_n(t) \in \Kcal
\end{equation}
for all times $t \in [0,T]$. Whence, taking into account the uniform regularity estimate \eqref{eq:PeanoBounds} satisfied by the sequence $(\mu_n(\cdot)) \subset \AC([0,T],\Pcal_p(\R^d))$, it follows from the Ascoli-Arzel\`a theorem for complete separable metric spaces (see e.g. \cite[Chapter 7 -- Theorem 18]{Kelley1975}) that 
\begin{equation}
\label{eq:PeanoUnifConv}
\sup_{t \in [0,T]} W_p(\mu_n(t),\mu(t)) \underset{n \to +\infty}{\longrightarrow}~ 0, 
\end{equation}
for some curve of measures $\mu(\cdot) \in \AC([0,T],\Pcal_p(\R^d))$, along a subsequence that we do not relabel. 

Concerning the sequence of velocity selections, observe that for each $m \geq 1$, the space restrictions to $B(0,m)$ of $(v_n(\cdot)) \subset \Lcal([0,T],C^0(\R^d,\R^d))$ form an integrably bounded sequence owing to the sublinearity estimate of \eqref{eq:PeanoUnifSublinSeq}. Besides, by Hypothesis \ref{hyp:CIPC}-$(iii)$ combined with \eqref{eq:PeanoCompactInc} and the classical Ascoli-Arzel\`a theorem, there exists a family of compact sets $(\Kpazo_t^m)_{t \in [0,T]} \subset C^0(B(0,m),\R^d)$ such that 
\begin{equation*}
V \Big(t,\mu_n(t-\tfrac{T}{n}) \Big)_{|B(0,m)} := \bigg\{ v_{|B(0,m)} \in C^0(B(0,m),\R^d) \, ~\textnormal{s.t.}~ v \in V \Big(t,\mu_n(t-\tfrac{T}{n}) \Big) \bigg\} \subset \Kpazo_t^m
\end{equation*}
for $\Lcal^1$-almost every $t \in [0,T]$ and each $n,m \geq 1$, wherein $v_{|B(0,m)}$ stands for the restriction of the function $v \in C^0(\R^d,\R^d)$ to $B(0,m)$. Thus, by iteratively applying Theorem \ref{thm:L1WeakCompactness} while performing a standard diagonal argument, we obtain the existence of a Carathéodory vector field $v(\cdot) \in \Lcal([0,T],C^0(\R^d,\R^d))$ such that for every $R>0$, one has that
\begin{equation}
\label{eq:PeanoVelConv}
\qquad \quad v_n(\cdot) ~\underset{n \to +\infty}{\rightharpoonup}~ v(\cdot)
\end{equation}
weakly in $L^1([0,T],C^0(B(0,R),\R^d))$, along a subsequence that depends on $R>0$. In particular, this convergence property implies that 
\begin{equation}
\label{eq:PeanoVelConvParticular}
\INTSeg{\big\langle \Bnu(t) , v(t) - v_n(t) \big\rangle_{C^0(B(0,R),\R^d)}}{t}{0}{T} ~\underset{n \to +\infty}{\longrightarrow}~ 0
\end{equation}
whenever $\Bnu(\cdot) \in L^{\infty}([0,T],\Mcal(B(0,R),\R^d))$, again as a consequence of Theorem \ref{thm:L1WeakCompactness}. Besides, it can be straightforwardly verified that 
\begin{equation*}
\Mpazo_p(\mu(t)) \leq \Cpazo \qquad \text{and} \qquad W_p(\mu(\tau),\mu(t)) \leq c_p \INTSeg{m(s)}{s}{\tau}{t}
\end{equation*}
for all times $t \in [0,T]$ as a consequence of \eqref{eq:PeanoUnifConv}, while \eqref{eq:PeanoUnifSublinSeq} and \eqref{eq:PeanoVelConv} yield up to an application of Mazur's lemma (see e.g. \cite[Corollary 3.8]{Brezis}) that
\begin{equation}
\label{eq:PeanoLimitSublin}
|v(t,x)| \leq (1+\Cpazo) m(t) \big( 1 + |x| \big), 
\end{equation}
for $\Lcal^1$-almost every $t \in [0,T]$ and all $x \in \R^d$.


\paragraph*{Step 4 -- Dynamics of the limit trajectory-selection pair.} At this stage, we need to show that the limit trajectory-selection pair $(\mu(\cdot),v(\cdot)) \in \AC([0,T],\Pcal_p(\R^d)) \times \Lcal([0,T],C^0(\R^d,\R^d))$ stemming from the compactness argument of Step 3 is a distributional solution of the Cauchy problem 
\begin{equation}
\label{eq:PeanoIntermCauchy}
\left\{
\begin{aligned}
& \partial_t \mu(t) + \Div_x(v(t) \mu(t)) = 0, \\
& \mu(0) = \mu^0.
\end{aligned}
\right.
\end{equation}
To begin with, note that it trivially follows from \eqref{eq:PeanoUnifConv} along with the construction detailed in Step 1 that $\mu(0) = \mu^0$. Thus, there remains to show that one may pass to the limit as $n \to +\infty$ in the weak formulation of \eqref{eq:PeanoCauchy}, of which we remind that it is given by 
\begin{equation}
\label{eq:PeanoIntermDistrib}
\INTSeg{\INTDom{\Big( \partial_t \phi(t,x) + \big\langle \nabla_x \phi(t,x) , v_n(t,x) \big\rangle \Big)}{\R^d}{\mu_n(t)(x)}}{t}{0}{T} = 0
\end{equation}
for all $\phi \in C^{\infty}_c((0,T) \times \R^d)$ and each $n \geq 1$. By leveraging the uniform convergence property of \eqref{eq:PeanoUnifConv} together with Proposition \ref{prop:WassDef}, it can be checked straightforwardly that 
\begin{equation}
\label{eq:PeanoWeakConvergenceVel1}
\INTSeg{\INTDom{\partial_t \phi(t,x)}{\R^d}{\mu_n(t)(x)}}{t}{0}{T} ~\underset{n \to +\infty}{\longrightarrow}~ \INTSeg{\INTDom{\partial_t \phi(t,x)}{\R^d}{\mu(t)(x)}}{t}{0}{T}.
\end{equation}
In addition, upon noting that the convergence result provided in \cite[Theorem 12.1.1]{AGS} for bounded Carathéodory vector fields is still valid for integrably bounded ones -- up to a relevant adaptation in the spirit of \cite[Lemma 5.1.7]{AGS} --, it follows from \eqref{eq:PeanoLimitSublin} combined with \eqref{eq:PeanoUnifConv} and Proposition \ref{prop:WassDef} that
\begin{equation}
\label{eq:PeanoWeakConvergenceVel2}
\INTSeg{\INTDom{\big\langle \nabla_x \phi(t,x) , v(t,x) \big\rangle}{\R^d}{\mu_n(t)(x)}}{t}{0}{T} ~\underset{n \to +\infty}{\longrightarrow}~ \INTSeg{\INTDom{\big\langle \nabla_x \phi(t,x) , v(t,x) \big\rangle}{\R^d}{\mu(t)(x)}}{t}{0}{T}.
\end{equation}
Thus, by merging \eqref{eq:PeanoWeakConvergenceVel1} and \eqref{eq:PeanoWeakConvergenceVel2}, one can check that in order recover the weak form of the dynamics in \eqref{eq:PeanoIntermCauchy} by letting $n \to +\infty$ in \eqref{eq:PeanoIntermDistrib}, there only remains to show that 
\begin{equation}
\label{eq:PeanoDistribGoal}
\INTSeg{\INTDom{\big\langle \nabla_x \phi(t,x) , v(t,x) - v_n(t,x) \big\rangle}{\R^d}{\mu_n(t)(x)}}{t}{0}{T} ~\underset{n \to +\infty}{\longrightarrow}~ 0
\end{equation}
for each $\phi \in C^{\infty}_c((0,T) \times \R^d,\R)$, possibly up to a subsequence. To do so, we consider test functions of the form 
\begin{equation*}
\phi(t,x) := \zeta(t) \psi(x)
\end{equation*}
for all $(t,x) \in [0,T] \times \R^d$ and some $(\zeta,\psi) \in C^{\infty}_c((0,T),\R) \times C^{\infty}_c(\R^d,\R)$, whose linear span is dense in $C^{\infty}_c((0,T)\times \R^d,\R)$ (see e.g. \cite[Chapter 8]{AGS}). We then let $R_{\psi} > 0$ be such that $\supp(\psi) \subset B(0,R_{\psi})$, and note that by Step 3, there exists a subsequence of $(v_n(\cdot)) \subset \Lcal([0,T],C^0(\R^d,\R^d))$ that we do not relabel for which \eqref{eq:PeanoVelConv} holds in $L^1([0,T],C^0(B(0,R),\R^d))$, with 
\begin{equation}
\label{eq:Rdef}
R := (1+R_{\psi}) \max\Big\{ 1 \, , (1+\Cpazo) \Norm{m(\cdot)}_1 \Big\} \exp \Big( (1+\Cpazo) \Norm{m(\cdot)}_1 \Big).
\end{equation}
In that case, we claim that 
\begin{equation}
\label{eq:PeanoSuperposition0}
\begin{aligned}
& \sup_{\sigma \in \AC([0,T],\R^d)} \Bigg\{ \bigg| \INTSeg{ \zeta(t) \, \big \langle \nabla \psi(\sigma(t)), v(t,\sigma(t)) - v_n(t,\sigma(t)) \big\rangle}{t}{0}{T} \, \bigg| ~\, \textnormal{s.t.}~ \dot \sigma(t) = v_n(t,\sigma(t)) \\
& \hspace{9.5cm} \text{for $\Lcal^1$-almost every $t \in [0,T]$} \Bigg\} \,\underset{n \to +\infty}{\longrightarrow}\, 0.
\end{aligned}
\end{equation}
Indeed, suppose by contradiction that there exists some $\epsilon > 0$, a subsequence of Carathéodory vector fields $(v_{n_k}(\cdot)) \subset \Lcal([0,T],C^0(\R^d,\R^d))$ and a sequence of curves $(\sigma_k(\cdot)) \subset \AC([0,T],\R^d)$ such that 
\begin{equation*}
\dot \sigma_k(t) = v_{n_k}(t,\sigma_k(t)) 
\end{equation*}
for $\Lcal^1$-almost every $t \in [0,T]$, and
\begin{equation}
\label{eq:PeanoSuperposition1}
\bigg| \INTSeg{ \zeta(t) \, \big\langle \nabla \psi(\sigma_k(t)), v(t,\sigma_k(t)) - v_{n_k}(t,\sigma_k(t)) \big\rangle \big|}{t}{0}{T} \, \bigg| \geq \epsilon
\end{equation}
for each $k \geq 1$. Then, there must exist a sequence $(\tau_k) \subset \supp(\zeta)$ such that $\sigma_k(\tau_k) \in B(0,R_{\psi})$ for every $k \geq 1$, which together with the estimates of \eqref{eq:PeanoUnifSublinSeq} and Gr\"onwall's lemma implies that 
\begin{equation*}
|\sigma_k(t)| \leq R \qquad \text{and} \qquad |\sigma_k(t) - \sigma_k(\tau)| \leq (1+\Cpazo)(1+R) \INTSeg{m(s)}{s}{\tau}{t}
\end{equation*}
for all times $0 \leq \tau \leq t \leq T$ and each $k \geq 1$, where $R>0$ is given as in \eqref{eq:Rdef}. Thence, by the Ascoli-Arzel\`a theorem, there exists a curve $\sigma(\cdot) \in \AC([0,T],\R^d)$ such that 
\begin{equation}
\label{eq:PeanoSuperposition1bis}
\sup_{t \in [0,T]} |\sigma(t) - \sigma_k(t)| ~\underset{k \to +\infty}{\longrightarrow}~ 0,
\end{equation}
along a subsequence which we do not relabel. Besides, it follows from Hypothesis \ref{hyp:CIPC}-$(iii)$ and Lebesgue's dominated convergence theorem that 
\begin{equation}
\label{eq:PeanoSuperposition2}
\sup_{n \geq 1} \INTSeg{|v_n(t,\sigma_k(t)) - v_n(t,\sigma(t))|}{t}{0}{T} ~\underset{k \to +\infty}{\longrightarrow}~ 0, 
\end{equation}
which together with \eqref{eq:PeanoSuperposition1} further implies that 
\begin{equation}
\label{eq:PeanoSuperposition2bis}
\bigg| \INTSeg{\zeta(t) \, \big\langle \nabla \psi(\sigma(t)), v(t,\sigma(t)) - v_{n_k}(t,\sigma(t)) \big\rangle}{t}{0}{T} \, \bigg| \geq \frac{\epsilon}{2},
\end{equation}
whenever $k \geq 1$ is sufficiently large. However, by the weak-compactness property of \eqref{eq:PeanoVelConvParticular} applied to the mapping $t \in [0,T] \mapsto \Bnu(t) := \zeta(t) \nabla \psi \cdot \delta_{\sigma(t)} \in \Mcal(B(0,R),\R^d)$, it necessarily hold that
\begin{equation}
\label{eq:PeanoSuperposition3}
\INTSeg{\zeta(t) \, \big\langle \nabla \psi(\sigma(t)) , v(t,\sigma(t)) - v_{n_k}(t,\sigma(t)) \big\rangle}{t}{0}{T} ~\underset{k \to +\infty}{\longrightarrow}~ 0
\end{equation}
along a subsequence that we do not relabel, which contradicts \eqref{eq:PeanoSuperposition2bis}. 

To conclude, we observe that by Theorem \ref{thm:Superposition}, there exists a sequence of superposition measures $(\Beta_n) \subset \Pcal(\R^d \times \Sigma_T)$ associated with $(v_n(\cdot)) \subset \Lcal([0,T],C^0(\R^d,\R^d))$ which satisfy
\begin{equation*}
(\pi_{\R^d})_{\sharp} \Beta_n = \mu^0 \qquad \text{and} \qquad (\esf_t)_{\sharp} \Beta_n = \mu_n(t),
\end{equation*}
for all times $t \in [0,T]$ and each $n \geq 1$. Then, by \eqref{eq:PeanoSuperposition0} and Fubini's theorem, one can check that
\begin{equation*}
\begin{aligned}
& \bigg| \INTSeg{\zeta(t) \INTDom{\big\langle \nabla \psi(x) , v(t,x) - v_n(t,x) \big\rangle}{\R^d}{\mu_n(t)(x)}}{t}{0}{T} \; \bigg| \\
& \hspace{1.5cm} = \bigg| \INTSeg{\INTDom{\zeta(t) \, \big\langle \nabla \psi(\sigma(t)) , v(t,\sigma(t)) - v_n(t,\sigma(t)) \big\rangle}{\R^d \times \Sigma_T}{\Beta_n(x,\sigma)}}{t}{0}{T} \; \bigg| \\
& \hspace{1.5cm} \leq \sup_{(x,\sigma) \in \supp(\Beta_n)} \bigg| \INTSeg{\zeta(t) \, \big\langle \nabla \psi(\sigma(t)) , v(t,\sigma(t)) - v_n(t,\sigma(t)) \big\rangle}{t}{0}{T} \, \bigg|\\
& \hspace{1.5cm} \leq \sup_{\sigma \in \AC([0,T],\R^d)} \Bigg\{ \bigg| \INTSeg{ \zeta(t) \, \big \langle \nabla \psi(\sigma(t)), v(t,\sigma(t)) - v_n(t,\sigma(t)) \big\rangle}{t}{0}{T} \, \bigg| ~\textnormal{s.t.}\\
& \hspace{6cm} \dot \sigma(t) = v_n(t,\sigma(t)) ~ \text{for $\Lcal^1$-almost every $t \in [0,T]$} \Bigg\} \,\underset{n \to +\infty}{\longrightarrow}\, 0
\end{aligned}
\end{equation*}
as a consequence of \eqref{eq:PeanoSuperposition0}, which then yields \eqref{eq:PeanoDistribGoal}. Thence, we have established the trajectory-selection pair $(\mu(\cdot),v(\cdot)) \in \AC([0,T],\Pcal_p(\R^d)) \times \Lcal([0,T],C^0(\R^d,\R^d))$ is a solution of \eqref{eq:PeanoIntermCauchy}.


\paragraph*{Step 5 -- Admissibility of the limit velocity selection.}

In this last step, we conclude the proof of Theorem \ref{thm:Peano} by showing that the limit Carathéodory vector field $v(\cdot) \in \Lcal([0,T],C^0(\R^d,\R^d))$ is an admissible velocity selection for \eqref{eq:PeanoCauchy}, namely
\begin{equation}
\label{eq:PeanoVelInterm}
v(t) \in V(t,\mu(t)) 
\end{equation}
for $\Lcal^1$-almost every $t \in [0,T]$. To this end, we recall that the elements of the sequence $(\mu_n(\cdot)) \subset \AC([0,T],\Pcal_p(\R^d))$ are uniformly equi-continuous by \eqref{eq:PeanoBounds}, which implies that for every $\delta > 0$, there exists an integer $N_{\delta} \geq 1$ such that
\begin{equation*}
W_p \Big(\mu_n\big( t-\tfrac{T}{n} \big), \mu_n(t) \Big) \leq \delta, 
\end{equation*}
for all times $t \in [0,T]$, whenever $n \geq N_{\delta}$. Consider now the curves $\tilde{\mu}_n(\cdot) \in \AC([0,T],\Pcal_p(\R^d))$ defined by $\tilde{\mu}_n(t) := \mu_n(t-\tfrac{T}{n})$ for all times $t \in [0,T]$, and notice that 
\begin{equation}
\label{eq:PeanoUnifConvBis}
\sup_{t \in [0,T]} W_p (\tilde{\mu}_n(t) , \mu(t)) ~\underset{n \to +\infty}{\longrightarrow}~ 0
\end{equation}
owing to what precedes, and also
\begin{equation}
\label{eq:PeanoTildInc1}
v_n(t) \in V(t,\tilde{\mu}_n(t)), 
\end{equation}
for $\Lcal^1$-almost every $t \in [0,T]$. At this point, we recall that the set-valued map $t \in [0,T] \tto V(t,\mu(t)) \subset C^0(\R^d,\R^d)$ has nonempty and compact images by Hypotheses \ref{hyp:CIPC}-$(ii)$ and $(iii)$, and that it is $\Lcal^1$-measurable by Lemma \ref{lem:MeasurableSel}-$(c)$. Moreover, observe that as a consequence of Hypothesis \ref{hyp:CIPC}-$(iv)$ together with \eqref{eq:PeanoCompactInc}, there exists a map $l(\cdot) \in L^1([0,T],\R_+)$ and a continuous modulus $\omega : \R_+ \to \R_+$, both possibly depending on $\Kcal \subset \Pcal_p(\R^d)$, such that 
\begin{equation*}
V(t,\mu(t)) \cap \bigg\{ w \in C^0(\R^d,\R^d) ~\, \textnormal{s.t.}~ \dcc(v_n(t),w) \leq l(t) \, \omega \Big( W_p(\tilde{\mu}_n(t),\mu(t)) \Big) \bigg\}
\end{equation*}
is nonempty for $\Lcal^1$-almost every $t \in [0,T]$. Hence, it follows from Lemma \ref{lem:MeasurableSel}-$(a)$ that for each $n \geq 1$, there exists a measurable selection 
\begin{equation}
\label{eq:PeanoTildInc2}
t \in [0,T] \mapsto \tilde{v}_n(t) \in V(t,\mu(t))
\end{equation}
which is such that
\begin{equation*}
\dcc(\tilde{v}_n(t),v_n(t)) \leq l(t) \, \omega \Big(W_p \big(\tilde{\mu}_n(t),\mu(t)) \Big)
\end{equation*}
for $\Lcal^1$-almost every $t \in [0,T]$. By Lebesgue's dominated convergence theorem combined with \eqref{eq:PeanoUnifConv} and  \eqref{eq:PeanoVelConv}, this further implies that 
\begin{equation}
\label{eq:PeanoVelConvBis}
\qquad \quad \tilde{v}_n(\cdot) ~\underset{n \to +\infty}{\rightharpoonup}~ v(\cdot) 
\end{equation}
weakly in $L^1([0,T],C^0(B(0,R),\R^d))$, along possibly different subsequences depending on $R>0$. To conclude, we finally consider the function sets defined by 
\begin{equation*}
\Vcal_R := \Bigg\{ w(\cdot) \in L^1([0,T],C^0(B(0,R),\R^d)) \, ~\textnormal{s.t.}~ w(t) \in V(t,\mu(t))_{| B(0,R)}~ \text{for $\Lcal^1$-almost every $t \in [0,T]$} \Bigg\}
\end{equation*}
for each $R >0$, and note that the latter are closed for the strong topology of $L^1([0,T],C^0(B(0,R),\R^d))$. As these are convex by Hypothesis \ref{hyp:CIPC}-$(i)$, it follows e.g. from \cite[Theorem 3.7]{Brezis} that they are also weakly closed in $L^1([0,T],C^0(B(0,R),\R^d))$. Thus, by \eqref{eq:PeanoTildInc2} and \eqref{eq:PeanoVelConvBis}, we may infer that the maps
\begin{equation*}
v(\cdot)_{|B(0,R)} : t \in [0,T] \mapsto v(t)_{|B(0,R)} \in C^0(B(0,R),\R^d)
\end{equation*}
belong to $\Vcal_R$ for each $R > 0$, which equivalently means that $v(t) \in V(t,\mu(t))$ for $\Lcal^1$-almost every $t \in [0,T]$ and ends the proof of Theorem \ref{thm:Peano}.
\end{proof}

\begin{rmk}[On the choice of proving Theorem \ref{thm:Peano} by means of an Euler scheme]
To prove the existence of solutions to a continuity inclusion with a Carathéodory right-hand side, a tempting strategy, implemented e.g. in \cite[Section 2.1 -- Theorem 3]{Aubin1984} or \cite[Theorem 2.9]{Frankowska1995}, could be to consider first a sufficiently regular exact or approximate velocity selection $(t,\mu) \in [0,T] \times \Pcal_p(\R^d) \mapsto v(t,\mu) \in C^0(\R^d,\R^d)$, and then to show that a continuity equation driven by this latter admits at least a solution.

While such a program may work in practice, carrying it out seemed difficult -- and perhaps suboptimal -- for the following reasons. Firstly, the results ensuring the existence of Carathéodory selections for Carathéodory set-valued maps such as \cite[Theorem 9.5.2]{Aubin1990} do not exist for multifunctions valued in infinite-dimensional spaces. Even if an adequate counterpart were to be found in our context, one would still then need to prove that the corresponding nonlocal continuity equations admit solutions, most likely by means of an Euler scheme. Secondly, even though results providing families of regular approximate selections for upper-semicontinuous set-valued mappings such as \cite[Theorem 9.2.1]{Aubin1990} may still hold for Fr\'echet instead of Banach spaces, establishing the compactness of the underlying sequences may prove to be challenging, owing to the lack of uniformity in the regularity of said selections.
\end{rmk}


\section{A priori estimates, compactness and relaxation in the Cauchy-Lipschitz framework}
\label{section:Filippov}
\setcounter{equation}{0} \renewcommand{\theequation}{\thesection.\arabic{equation}}

In this section, we derive quantitative well-posedness results and discuss some of the fine topological properties of the reachable sets of the set-valued Cauchy problem 
\begin{equation}
\label{eq:ContIncCauchy}
\left\{
\begin{aligned}
& \partial_t \mu(t) \in - \Div_x \Big( V(t,\mu(t)) \mu(t) \Big), \\ 
& \mu(0) = \mu^0. 
\end{aligned}
\right.
\end{equation}
To this end, we depart from the Carathéodory framework investigated in Section \ref{section:Peano}, and work under the following stronger Cauchy-Lipschitz assumptions. 

\begin{taggedhyp}{\textbn{(CI)}} 
\label{hyp:CICL} \hfill
\begin{enumerate}
\item[$(i)$] The set-valued map $(t,\mu) \in [0,T] \times \Pcal_p(\R^d) \tto V(t,\mu) \subset C^0(\R^d,\R^d)$ is Carathéodory with nonempty closed images.
\item[$(ii)$] There exists a map $m(\cdot) \in L^1([0,T],\R_+)$ such that for $\Lcal^1$-almost every $t \in [0,T]$, any $\mu \in \Pcal_p(\R^d)$, every $v \in V(t,\mu)$ and all $x \in \R^d$, there holds 
\begin{equation*}
|v(x)| \leq m(t) \Big(1 + |x| + \Mpazo_p(\mu) \Big)
\end{equation*}
for all $x \in \R^d$. 
\item[$(iii)$] There exists a map $l(\cdot) \in L^1([0,T],\R_+)$ such that for $\Lcal^1$-almost every $t \in [0,T]$, any $\mu \in \Pcal_p(\R^d)$ and every $v \in V(t,\mu)$, there holds 
\begin{equation*}
\Lip(v \, ; \R^d) \leq l(t). 
\end{equation*} 
\item[$(iv)$] There exists a map $L(\cdot) \in L^1([0,T],\R_+)$ such that for $\Lcal^1$-almost every $t \in [0,T]$, any $\mu,\nu \in \Pcal_p(\R^d)$ and each $v \in V(t,\mu)$, there exists an element $w \in V(t,\nu)$ for which
\begin{equation*}
\dsf_{\sup}(v,w) \leq L(t) W_p(\mu,\nu).
\end{equation*}
\end{enumerate}
\end{taggedhyp}

\begin{rmk}[Comparison with the main assumptions of \cite{ContInc}]
The study of curves of compactly supported measures generated by locally Lipschitz set-valued maps initiated in \cite{ContInc} is completely included in our subsequent developments, up to some minor technical adjustments. Likewise, we could have opted in the present manuscript for assumptions in which the Lipschitz constants of the dynamics are localised on compact subsets of $\Pcal_p(\R^d)$.
\end{rmk}


\subsection{Existence of solutions and Filippov estimates}
\label{subsection:Filippov}

In this section, we prove a new variant of the Filippov estimates for solutions of the Cauchy problem \eqref{eq:ContIncCauchy}. Given an arbitrary measure curve, these latter involve the sum of a local discrepancy term between the velocity of said curve and that of the solution of the inclusion on an arbitrary ball, to which one adds a remainder controlled by the tail of the initial datum of said auxiliary curve.

\begin{thm}[Filippov estimates]
\label{thm:Filippov}
Let $V : [0,T] \times \Pcal_p(\R^d) \tto C^0(\R^d,\R^d)$ be a set-valued map satisfying Hypotheses \ref{hyp:CICL} and $\nu(\cdot) \in \AC([0,T],\Pcal_p(\R^d))$ be a solution of the continuity equation
\begin{equation*}
\partial_t \nu(t) + \Div_y (w(t)\nu(t)) = 0
\end{equation*}
driven by a Lebesgue-Borel velocity field $w : [0,T] \times \R^d \to \R^d$ satisfying the sublinearity estimate
\begin{equation}
\label{eq:Filippov_wsublin}
|w(t,y)| \leq m(t)(1+|y|), 
\end{equation}
for $\Lcal^1$-almost every $t \in [0,T]$ and $\nu(t)$-almost every $y \in \R^d$. For every $R > 0$, define the \textnormal{localised mismatch function} $\eta_R(\cdot) \in L^1([0,T],\R_+)$ by 
\begin{equation}
\label{eq:mismatch}
\eta_R(t)  := \dist_{L^{\infty}( B(0,R),\R^d ; \, \nu(t))} \Big( w(t) \, ; V(t,\nu(t)) \Big)
\end{equation}
for $\Lcal^1$-almost every $t \in [0,T]$.  

Then for every $\mu^0 \in \Pcal_p(\R^d)$ and each $R > 0$, there exists a trajectory-selection pair $(\mu(\cdot),v(\cdot)) \in \AC([0,T],\Pcal_p(\R^d)) \times \Lcal([0,T],C^0(\R^d,\R^d))$ solution of the Cauchy problem \eqref{eq:ContIncCauchy} which satisfies the distance estimate
\begin{equation}
\label{eq:FilippovEstimate1}
\begin{aligned}
W_p(\mu(t),\nu(t)) \leq \Dpazo_p(t,R)
\end{aligned}
\end{equation}
for all times $t \in [0,T]$, where
\begin{equation}
\label{eq:DpDef}
\Dpazo_p(t,R) := C_p \bigg( W_p(\mu^0,\nu(0)) + \INTSeg{\eta_{R}(s)}{s}{0}{t} +\Epazo_{\nu}(t,R) \bigg) \exp \Big( C_p' \NormL{l(\cdot)}{1}{[0,t]}^p + \, \chi_p(t) \Big). 
\end{equation}
Therein, the constants $C_p,C_p' > 0$ are as in \eqref{eq:ConstantDef}, the error term $\Epazo_{\nu}(t,R)$ is given explicitly by
\begin{equation*}
\Epazo_{\nu}(t,R) := 2 \NormL{m(\cdot)}{1}{[0,t]} (1+\Cpazo_T) \bigg( \INTDom{(1+|y|)^p}{\{y \; \textnormal{s.t.} \; |y| \geq R/\Cpazo_T-1 \}}{\nu(0)(y)} \bigg)^{1/p} 
\end{equation*}
for some constant $\Cpazo_T > 0$ that only depends on the magnitudes of $p,\Mpazo_p(\mu^0)$ and $\Norm{m(\cdot)}_1$, and the map $\chi_p(\cdot) \in L^{\infty}([0,T],\R_+)$ writes for all $t \in [0,T]$ as
\begin{equation}
\label{eq:ChiDef}
\chi_p(t) := C_p \NormL{L(\cdot)}{1}{[0,t]} \exp \Big( C_p' \NormL{l(\cdot)}{1}{[0,t]}^p \hspace{-0.05cm} \Big).
\end{equation}
Moreover, the velocity selection $t \in [0,T] \mapsto v(t) \in V(t,\mu(t))$ complies with the pointwise estimate 
\begin{equation}
\label{eq:FilippovEstimate2}
\NormL{v(t) - w(t)}{\infty}{B(0,R),\R^d;\, \nu(t)} \, \leq \, \eta_R(t) + L(t) \Dpazo_p(t,R)
\end{equation}
for $\Lcal^1$-almost every $t \in [0,T]$. 
\end{thm}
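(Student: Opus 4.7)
The plan is to mimic Filippov's original Picard-type scheme for differential inclusions, relying crucially on the new localised stability estimate \eqref{eq:Gronwall2} of Proposition \ref{prop:Gronwall} to handle the tails of the measures, since the Fréchet metric $\dcc(\cdot,\cdot)$ only yields vicinity of velocity fields on compact subsets of $\R^d$.

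First, setting $\mu_0(\cdot) := \nu(\cdot)$, I would apply a measurable-selection principle of the type of Lemma \ref{lem:MeasurableSel}-$(b)$ to the $\Lcal^1$-measurable set-valued map $t \rightrightarrows V(t,\nu(t))$ (which has compact images by the Ascoli-Arzel\`a theorem combined with hypotheses \ref{hyp:CICL}-$(ii)$-$(iii)$) with the lower-semicontinuous functional $v \mapsto \NormL{v - w(t)}{\infty}{B(0,R),\R^d;\,\nu(t)}$ to produce a measurable selection $t \mapsto v_1(t) \in V(t,\nu(t))$ satisfying $\NormL{v_1(t) - w(t)}{\infty}{B(0,R),\R^d;\,\nu(t)} \leq \eta_R(t)$ for $\Lcal^1$-almost every $t \in [0,T]$. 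Theorem \ref{thm:Wellposedness} then yields a curve $\mu_1(\cdot) \in \AC([0,T],\Pcal_p(\R^d))$ solving the continuity equation driven by $v_1$ with datum $\mu^0$, and the localised stability estimate \eqref{eq:Gronwall2} applied to the pair $(\mu_1,\nu)$ gives $\phi_0(t) := W_p(\mu_1(t),\nu(t)) \leq A(t)$, where $A(t)$ denotes the right-hand side of \eqref{eq:DpDef} with the $\exp(\chi_p(t))$ factor omitted.

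Inductively, given $(\mu_n,v_n)$ with $v_n(t) \in V(t,\mu_{n-1}(t))$, hypothesis \ref{hyp:CICL}-$(iv)$ combined with a similar measurable-selection argument (this time applied to the set-valued map $t \rightrightarrows V(t,\mu_n(t))$ with the Carathéodory functional $(t,v) \mapsto \dsf_{\sup}(v,v_n(t)) = \sup_{k \geq 1} \NormC{v - v_n(t)}{0}{B(0,k),\R^d}$) would produce a measurable selection $v_{n+1}(t) \in V(t,\mu_n(t))$ satisfying $\dsf_{\sup}(v_{n+1}(t),v_n(t)) \leq L(t) \phi_{n-1}(t)$, after which $\mu_{n+1}(\cdot)$ is defined via Theorem \ref{thm:Wellposedness}. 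The \emph{global} nature of the resulting velocity bound allows one to invoke the global stability estimate \eqref{eq:Gronwall1}, which combined with the vanishing of the initial discrepancy $W_p(\mu_n(0),\mu_{n+1}(0)) = 0$ yields
\begin{equation*}
\phi_n(t) \leq \Kpazo \INTSeg{L(s) \phi_{n-1}(s)}{s}{0}{t}, \qquad \Kpazo := C_p \exp \big( C_p' \NormL{l(\cdot)}{1}{[0,T]}^p \big).
\end{equation*}
A straightforward induction exploiting the monotonicity of $A(\cdot)$ then gives $\phi_n(t) \leq A(t)(\Kpazo \NormL{L(\cdot)}{1}{[0,t]})^n/n!$, so that summation over $n \geq 0$ produces $\sum_{n \geq 0} \phi_n(t) \leq A(t) \exp(\chi_p(t)) = \Dpazo_p(t)$.

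The uniform summability of $(\phi_n(t))$ yields the Cauchy convergence in $\AC([0,T],\Pcal_p(\R^d))$ of $(\mu_n)$ to a curve $\mu(\cdot)$, while the summability of $\int_0^T \dsf_{\sup}(v_n,v_{n+1})\,dt$ combined with Lemma \ref{lem:CompleteMetric} provides a limit $v(\cdot) \in \Lcal([0,T],C^0(\R^d,\R^d))$. Passing to the limit in the distributional formulation of $\partial_t \mu_n + \Div_x(v_n \mu_n) = 0$ yields $\partial_t \mu + \Div_x(v \mu) = 0$, while $v(t) \in V(t,\mu(t))$ for $\Lcal^1$-almost every $t \in [0,T]$ follows from hypothesis \ref{hyp:CICL}-$(iv)$, the closedness of the images and the uniform $W_p$-convergence of $\mu_n(t)$ to $\mu(t)$. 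Estimate \eqref{eq:FilippovEstimate1} then follows from the telescoping $W_p(\mu(t),\nu(t)) \leq \sum_{n \geq 0} \phi_n(t) \leq \Dpazo_p(t)$, and \eqref{eq:FilippovEstimate2} from the triangle inequality $\NormL{v(t) - w(t)}{\infty}{B(0,R),\R^d;\,\nu(t)} \leq \eta_R(t) + L(t)\Dpazo_p(t)$. The main technical hurdle is the very first measurable selection, because the $L^\infty(B(0,R),\nu(t))$-distance functional is only lower-semicontinuous in the Fréchet topology of $C^0(\R^d,\R^d)$ and must be realised as a nondecreasing supremum of Carathéodory approximants (for instance via $L^k$-type approximations of the essential supremum, or via dense countable samplings of $\nu(t)$ on $B(0,R)$) before Lemma \ref{lem:MeasurableSel}-$(b)$ can be applied verbatim.
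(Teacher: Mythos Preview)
Your proposal is correct and matches the paper's Picard-type scheme almost step for step: localised stability \eqref{eq:Gronwall2} at the first iteration, global stability \eqref{eq:Gronwall1} thereafter, factorial decay via iterated integrals of $L(\cdot)$, and convergence of the velocity sequence via Lemma \ref{lem:CompleteMetric}. Two minor remarks: first, your closing worry is unnecessary, because $v \mapsto \NormL{v - w(t)}{\infty}{B(0,R),\R^d;\,\nu(t)}$ is actually continuous (indeed $1$-Lipschitz) for the $\dcc$-topology---convergence in $\dcc$ implies uniform convergence on $B(0,R)$---so the paper invokes Lemma \ref{lem:MeasurableSel}-$(b)$ directly as a Carath\'eodory functional without any approximation; second, to recover exactly $\Dpazo_p(t)$ with $\chi_p(t)$ depending on $\NormL{l(\cdot)}{1}{[0,t]}$ rather than on $\NormL{l(\cdot)}{1}{[0,T]}$, you should retain the $[0,t]$-localised exponential factor at each step of the chain (as in the paper's display \eqref{eq:CauchyEst1}) instead of freezing a global constant $\Kpazo$.
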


Before moving on to the proof of Theorem \ref{thm:Filippov}, we state a technical lemma dealing with chained integral estimates, whose proof is the matter of a straightforward induction argument. 

\begin{lem}[A uniform bound on families of functions satisfying recurrent integral estimates]
\label{lem:InductiveIneq}
Let $m(\cdot) \in L^1([0,T],\R_+)$ and $f^0,\alpha > 0$ be two given constants. Then, every at most countable family of functions $(f_n(\cdot)) \subset C^0([0,T],\R_+)$ satisfying $\NormC{f_0(\cdot)}{0}{[0,T],\R_+} \leq f^0$ as well as the recurrence estimate 
\begin{equation*}
f_{n+1}(t) \leq \alpha \bigg( 1 + \INTSeg{m(s) f_n(s)}{s}{0}{t} \bigg)
\end{equation*}
for all times $t \in [0,T]$ and each $n \geq 0$ is uniformly bounded, with 
\begin{equation*}
\sup_{n \geq 0} \NormC{f_n(\cdot)}{0}{[0,T],\R_+} \leq (\alpha+f^0) \exp \big( \alpha \hspace{-0.05cm} \Norm{m(\cdot)}_1 \hspace{-0.05cm} \big). 
\end{equation*}
\end{lem}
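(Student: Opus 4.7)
The plan is to prove the uniform bound by establishing an explicit closed-form upper envelope for the iterated integral inequality, then recognising the resulting series as an exponential.

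First I would introduce the cumulative weight $M(t) := \int_0^t m(s)\,\dn s$ and set $M := M(T) = \Norm{m(\cdot)}_1$. The core step is an induction on $n \geq 0$ showing that every $f_n(\cdot)$ satisfies the pointwise estimate
\begin{equation*}
f_n(t) \, \leq \, \sum_{k=0}^{n-1} \frac{\alpha^{k+1} M(t)^k}{k!} \, + \, f^0 \, \frac{\alpha^n M(t)^n}{n!},
\end{equation*}
for all $t \in [0,T]$. The base case $n=0$ reduces to the hypothesis $\NormC{f_0(\cdot)}{0}{[0,T],\R_+} \leq f^0$. For the inductive step, one injects the inductive hypothesis into the recurrence, yielding
\begin{equation*}
f_{n+1}(t) \, \leq \, \alpha + \alpha \INTSeg{m(s) \Bigg( \sum_{k=0}^{n-1} \frac{\alpha^{k+1} M(s)^k}{k!} + f^0 \frac{\alpha^n M(s)^n}{n!} \Bigg)}{s}{0}{t},
\end{equation*}
and one concludes by noting that $\INTSeg{m(s) M(s)^k}{s}{0}{t} = M(t)^{k+1}/(k+1)$, which after re-indexing reconstructs the same pattern at level $n+1$.

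The second step is purely arithmetic: taking the supremum over $t \in [0,T]$ and then over $n \geq 0$, I would bound
\begin{equation*}
\sup_{n \geq 0} \NormC{f_n(\cdot)}{0}{[0,T],\R_+} \, \leq \, \sum_{k=0}^{+\infty} \frac{\alpha^{k+1} M^k}{k!} + f^0 \sup_{n \geq 0} \frac{\alpha^n M^n}{n!} \, \leq \, \alpha \exp(\alpha M) + f^0 \exp(\alpha M),
\end{equation*}
where I used that each term $\alpha^n M^n / n!$ is a summand of the exponential series and is therefore dominated by $\exp(\alpha M)$. Factoring this yields exactly $(\alpha + f^0) \exp(\alpha \Norm{m(\cdot)}_1)$.

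There is no real obstacle here: the only subtlety is identifying the correct ansatz, namely that one must keep a ``leading'' summand carrying the initial data with the factor $f^0 \alpha^n M(t)^n/n!$ distinct from the ``source'' partial sum $\sum_{k=0}^{n-1} \alpha^{k+1} M(t)^k/k!$ generated by the constant term $\alpha$ in the recurrence, since a naïve single-term induction (e.g. replacing $f^0$ by a constant majorant) would fail to give the sharp $(\alpha+f^0)$ prefactor.
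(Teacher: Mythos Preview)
Your proof is correct and follows exactly the approach the paper indicates: the paper does not write out a detailed proof but merely states that the lemma ``is the matter of a straightforward induction argument'', which is precisely what you have carried out in full detail.
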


The proof of Theorem \ref{thm:Filippov} is based on a constructive scheme, much like that of Theorem \ref{thm:Peano}, and is split into four steps. In Step 1, we detail the initialisation of the underlying induction argument by means of a selection principle applied along $\nu(\cdot)$, and proceed in Step 2 by building the whole sequence of trajectory-selection pairs. We then show in Step 3 that the latter is a Cauchy sequence for a suitable extended metric, and finally prove in Step 4 that the corresponding limit pair is a solution of \eqref{eq:ContIncCauchy} which satisfies the stability estimates \eqref{eq:FilippovEstimate1} and \eqref{eq:FilippovEstimate2}. 

\begin{proof}[Proof of Theorem \ref{thm:Filippov}]
Our goal in what follows is to build a sequence of trajectory-selection pairs $(\mu_n(\cdot),v_n(\cdot)) \subset \AC([0,T],\Pcal_p(\R^d)) \times \Lcal([0,T],C^0(\R^d,\R^d))$ whose elements solve the Cauchy problems
\begin{equation}
\label{eq:InductionCauchy}
\left\{
\begin{aligned}
& \partial_t \mu_n(t) + \Div_x (v_n(t) \mu_n(t)) = 0, \\
& \mu_n(0) = \mu^0, 
\end{aligned}
\right.
\end{equation}
while also satisfying the conditions
\begin{equation}
\label{eq:InductionCond1}
v_{n+1}(t) \in V(t,\mu_n(t)) \qquad \text{and} \qquad \dsf_{\sup}(v_n(t),v_{n+1}(t)) \leq L(t) W_p(\mu_{n-1}(t),\mu_n(t)) 
\end{equation}
for $\Lcal^1$-almost every $t \in [0,T]$, and complying with the uniform moment and regularity bounds 
\begin{equation}
\label{eq:InductionCond2}
\Mpazo_p(\mu_n(t)) \leq \Cpazo \qquad \text{and} \qquad W_p(\mu_n(\tau),\mu_n(t)) \leq c_p \INTSeg{m(s)}{s}{\tau}{t}
\end{equation}
for all times $0 \leq \tau \leq t \leq T$ and each $n \geq 1$. Therein, the constants $\Cpazo,c_p > 0$ only depend on the magnitudes of $p,\Mpazo_p(\mu^0)$ and $\Norm{m(\cdot)}_1$.  


\paragraph*{Step 1 -- Initialisation of the sequence.} Observe first that by combining Hypotheses \ref{hyp:CICL}-$(ii)$ and $(iii)$ along with Theorem \ref{thm:Ascoli}, the admissible velocity sets $V(t,\nu(t)) \subset C^0(\R^d,\R^d)$ are compact in the topology of uniform convergence on compact sets for $\Lcal^1$-almost every $t \in [0,T]$. Besides under Hypotheses \ref{hyp:CICL}-$(i)$ and $(iv)$, the set-valued map $t \in [0,T] \tto V(t,\nu(t)) \subset C^0(\R,\R^d)$ is $\Lcal^1$-measurable as a consequence of Lemma \ref{lem:MeasurableSel}-$(c)$. Furthermore, it can be checked that 
\begin{equation*}
(t,v) \in [0,T] \times C^0(\R^d,\R^d) \mapsto \NormL{v - w(t)}{\infty}{B(0,R),\R^d \; ; \nu(t)} \in \R_+
\end{equation*}
is a Carathéodory map for each $R>0$, in the sense that it is $\Lcal^1$-measurable with respect to $t \in [0,T]$ and continuous with respect to $v \in C^0(\R^d,\R^d)$ for the topology induced by $\dcc(\cdot,\cdot)$. Thus, recalling the definition \eqref{eq:mismatch} of the mismatch function $\eta_R(\cdot)$ and applying Lemma \ref{lem:MeasurableSel}-$(b)$ to the multifunction
\begin{equation*}
t \in [0,T] \tto V(t,\nu(t)) \cap \bigg\{ v \in C^0(\R^d,\R^d) \, ~\textnormal{s.t.}~  \NormL{v - w(t)}{\infty}{B(0,R),\R^d, \; \nu(t)} \, = \, \eta_R(t) \bigg\}, 
\end{equation*}
we obtain the existence of an $\Lcal^1$-measurable map $t \in [0,T] \mapsto v_1(t) \in V(t,\nu(t))$ such that 
\begin{equation}
\label{eq:FirstVelIneq}
\NormL{v_1(t) - w(t)}{\infty}{B(0,R),\R^d ; \, \nu(t)}  \, = \, \eta_R(t)  
\end{equation}
for $\Lcal^1$-almost every $t \in [0,T]$. 

Remark now that as a by-product of Hypotheses \ref{hyp:CICL}-$(ii)$ and $(iii)$ along with Lemma \ref{lem:Carathéodory}, the velocity field $v_1 : [0,T] \times \R^d \to \R^d$ is Carathéodory and such that
\begin{equation*} 
|v_1(t,x)| \leq m(t) \Big( 1 + |x| + \Mpazo_p(\nu(t)) \Big) \qquad \text{and} \qquad \Lip(v_1(t) \, ; \R^d) \leq l(t)
\end{equation*}
for $\Lcal^1$-almost every $t \in [0,T]$ and all $x \in \R^d$. In particular, it satisfies Hypotheses \ref{hyp:CE}, and by Theorem \ref{thm:Wellposedness} there exists a unique curve $\mu_1(\cdot) \in \AC([0,T],\Pcal_p(\R^d))$ solution of the Cauchy problem
\begin{equation*}
\left\{ 
\begin{aligned}
& \partial_t \mu_1(t) + \Div_x (v_1(t) \mu_1(t)) = 0, \\
& \mu_1(0) = \mu^0. 
\end{aligned}
\right.
\end{equation*}
Owing to the moment bound of Remark \ref{rmk:Moment}, the curve $\mu_1(\cdot)$ further complies with the estimate
\begin{equation}
\label{eq:EstimatesMu11}
\Mpazo_p(\mu_1(t)) \leq C_p \bigg( \Mpazo_p(\mu^0) + \INTSeg{m(s) \Big(1 + \Mpazo_p(\nu(s)) \Big)}{s}{0}{t} \bigg) \exp \Big( C_p' \NormL{m(\cdot)}{1}{[0,t]}^p \Big), \\
\end{equation}
for all times $t \in [0,T]$. Consequently, by Lemma \ref{lem:InductiveIneq} and \eqref{eq:MomentEst} of Proposition \ref{prop:Moment} applied to the curve $\nu(\cdot) \in \AC([0,T],\Pcal_p(\R^d))$, there exists a constant $\Cpazo > 0$ that only depends on the magnitudes of $p,\Mpazo_p(\mu^0),\Mpazo_p(\nu(0))$ and $\Norm{m(\cdot)}_1$, such that 
\begin{equation}
\label{eq:MomentBound1}
\max \Big\{ \Mpazo_p(\nu(t)) \, , \, \Mpazo_p(\mu_1(t)) \Big\} \leq \Cpazo  
\end{equation}
for all times $t \in [0,T]$. In particular, the velocity field $v_1 : [0,T] \times \R^d \to \R^d$ satisfies 
\begin{equation}
\label{eq:SublinStep1}
|v_1(t,x)| \leq (1+\Cpazo)m(t) \big(1 + |x| \big), 
\end{equation}
for $\Lcal^1$-almost every $t \in [0,T]$ and all $x \in \R^d$, which combined with \eqref{eq:ACEstimate} of Theorem \ref{thm:Wellposedness}  yields the existence of a constant $c_p > 0$ depending only on the magnitudes of $p,\Mpazo_p(\mu^0),\Mpazo_p(\nu(0))$ and $\Norm{m(\cdot)}_1$, such that 
\begin{equation*}
W_p(\mu_1(\tau),\mu_1(t)) \leq c_p \INTSeg{m(s)}{s}{\tau}{t}
\end{equation*}
for all times $0 \leq \tau \leq t \leq T$. Moreover, by applying the approximate stability inequality \eqref{eq:Gronwall2} of Proposition \ref{prop:Gronwall} while taking into account the sublinearity estimate \eqref{eq:SublinStep1}, it further holds that 
\begin{equation}
\label{eq:EstimatesMu12}
 W_p(\mu_1(t),\nu(t)) \leq C_p \bigg( W_p(\mu^0,\nu(0)) + \INTSeg{\eta_R(s)}{s}{0}{t} + \Epazo_{\nu}(t,R) \bigg) \exp \Big( C_p' \NormL{l(\cdot)}{1}{[0,t]}^p \hspace{-0.05cm} \Big)
\end{equation}
for all times $t \in [0,T]$, where
\begin{equation*}
\Epazo_{\nu}(t,R) := 2 \NormL{m(\cdot)}{1}{[0,t]} (1+\Cpazo_T) \bigg( \INTDom{(1+|y|)^p}{\{ \; \textnormal{s.t.} \; |y| \geq R/\Cpazo_T-1 \}}{\nu(0)(y)} \bigg)^{1/p}
\end{equation*}
with $\Cpazo_T := \max\big\{ 1 , (1+\Cpazo) \hspace{-0.1cm} \Norm{m(\cdot)}_1 \hspace{-0.1cm} \big\} \exp\big( (1+\Cpazo) \hspace{-0.1cm} \Norm{m(\cdot)}_1 \hspace{-0.1cm} \big)$. This together with the moment estimate of \eqref{eq:EstimatesMu11} concludes the initialisation step of our induction argument. 


\paragraph*{Step 2 -- Construction of the sequence of trajectory-selection pairs.}

At this stage, note that
\begin{equation*}
\dsf_{\sup}(v_1(t),w) = \sup_{m \geq 1} \NormC{v_1(t) - w}{0}{B(0,m),\R^d} 
\end{equation*}
for $\Lcal^1$-almost every $t \in [0,T]$ and all $w \in C^0(\R^d,\R^d)$, and that by construction, the maps $(t,w) \in [0,T] \times C^0(\R^d,\R^d) \mapsto \NormC{v_1(t)-w}{0}{B(0,m),\R^d} \in \R_+$ are Carathéodory for each $m \geq 1$. Thence, as a consequence of Lemma \ref{lem:MeasurableSel}-$(a)$ combined with Hypothesis \ref{hyp:CICL}-$(iv)$, there exists an $\Lcal^1$-measurable map $t \in [0,T] \mapsto v_2(t) \in V(t,\mu_1(t))$ satisfying
\begin{equation*}
\dsf_{\sup}(v_1(t),v_2(t)) \leq L(t) W_p(\mu_1(t),\nu(t)) 
\end{equation*}
for $\Lcal^1$-almost every $t \in [0,T]$. From Hypotheses \ref{hyp:CICL}-$(ii)$ and $(iii)$ combined with Lemma \ref{lem:Carathéodory}, we may then infer that the velocity field $v_2 : [0,T] \times \R^d \rightarrow \R^d$ is Carathéodory, and such that
\begin{equation*}
|v_2(t,x)| \leq  m(t) \Big( 1 + |x| + \Mpazo_p(\mu_1(t)) \Big) \qquad \text{and} \qquad \Lip(v_2(t) \, ; \R^d) \leq l(t), 
\end{equation*}
for $\Lcal^1$-almost every $t \in [0,T]$ and all $x \in \R^d$. In particular, it satisfies Hypotheses \ref{hyp:CE}, and thus generates a unique solution $\mu_2(\cdot) \in \AC([0,T],\Pcal_p(\R^d))$ of the corresponding Cauchy problem
\begin{equation*}
\left\{
\begin{aligned}
& \partial_t \mu_2(t) + \Div_x (v_2(t)\mu_2(t)) = 0, \\
& \mu_2(0) = \mu^0. 
\end{aligned}
\right.
\end{equation*}
By the refined moment bound of Remark \ref{rmk:Moment} and the estimate \eqref{eq:Gronwall1} of Proposition \ref{prop:Gronwall}, it may again be checked that 
\begin{equation*}
\left\{
\begin{aligned}
& \Mpazo_p(\mu_2(t)) \leq C_p \bigg( \Mpazo_p(\mu^0) + \INTSeg{m(s)\Big( 1 + \Mpazo_p(\mu_1(s)) \Big)}{s}{0}{t} \bigg) \exp \Big( C_p' \NormL{m(\cdot)}{1}{[0,t]}^p \hspace{-0.05cm} \Big), \\
& W_p(\mu_1(t),\mu_2(t)) \leq C_p \bigg( \INTSeg{\dsf_{\sup}(v_1(s),v_2(s))}{s}{0}{t} \bigg) \exp \Big( C_p' \NormL{l(\cdot)}{1}{[0,t]}^p \hspace{-0.05cm} \Big). 
\end{aligned}
\right.
\end{equation*}
As a consequence of Lemma \ref{lem:InductiveIneq}, this implies in particular that 
\begin{equation*}
\max \Big\{ \Mpazo_p(\nu(t)) \, , \, \Mpazo_p(\mu_1(t)) \, , \, \Mpazo_p(\mu_2(t)) \Big\} \leq \Cpazo 
\end{equation*}
for all times $t \in [0,T]$, where $\Cpazo > 0$ is the same constant as in Step 1 which only depends on the magnitudes of $p,\Mpazo_p(\mu^0),\Mpazo_p(\nu(0))$ and $\Norm{m(\cdot)}_1$. 

By repeating this process, we can inductively build a sequence $(\mu_n(\cdot),v_n(\cdot)) \subset \AC([0,T],\Pcal_p(\R^d)) \times \Lcal([0,T],C^0(\R^d,\R^d))$ of trajectory-selection pairs solution of the Cauchy problems \eqref{eq:InductionCauchy}, whose elements satisfy the conditions of \eqref{eq:InductionCond1} as well as the bounds in \eqref{eq:InductionCond2}. In addition, these latter comply for each $n \geq 1$ with the uniform sublinearity estimates
\begin{equation}
\label{eq:InductionEst1}
|v_n(t,x)| \leq (1+\Cpazo) m(t) \big( 1 + |x| \big),
\end{equation}
for $\Lcal^1$-almost every \text{$t \in [0,T]$} and all $x \in \R^d$, as well as the distance estimates
\begin{equation}
\label{eq:InductionEst2}
W_p(\mu_n(t),\mu_{n+1}(t)) \leq C_p \bigg( \INTSeg{\dsf_{\sup}(v_n(s),v_{n+1}(s))}{s}{0}{t}\bigg) \exp \Big( C_p' \NormL{l(\cdot)}{1}{[0,t]}^p \Big), 
\end{equation}
for all times $t \in [0,T]$. 
%


\paragraph*{Step 3 -- Convergence of the sequence of trajectory-selection pairs.} In what follows, we prove that the sequence of pairs $(\mu_n(\cdot),v_n(\cdot)) \subset \AC([0,T],\Pcal_p(\R^d)) \times \Lcal([0,T],C^0(\R^d,\R^d))$ built in Step 1 and Step 2 is Cauchy in a suitable sense. First, observe that by merging the distance estimates of \eqref{eq:InductionCond1}, \eqref{eq:EstimatesMu12} and \eqref{eq:InductionEst2}, one has that
\begin{equation}
\label{eq:CauchyEst1}
\begin{aligned}
& W_p(\mu_n(t),\mu_{n+1}(t)) \\ 
& \leq C_p \bigg( \INTSeg{\dsf_{\sup} ( v_n(s_n),v_{n+1}(s_n))}{s_n}{0}{t} \bigg) \exp \Big( C_p' \NormL{l(\cdot)}{1}{[0,t]}^p \hspace{-0.05cm} \Big) \\
& \leq C_p \bigg( \INTSeg{L(s_n) W_p(\mu_{n-1}(s_n),\mu_n(s_n))}{s_n}{0}{t} \bigg) \exp \Big( C_p' \NormL{l(\cdot)}{1}{[0,t]}^p \hspace{-0.05cm} \Big) \\
& \leq C_p^2 \bigg( \INTSeg{L(s_n) \INTSeg{\dsf_{\sup} (v_{n-1}(s_{n-1}),v_n(s_{n-1}))}{s_{n-1}}{0}{s_n} }{s_n}{0}{t} \bigg) \exp \Big( 2 C_p' \NormL{l(\cdot)}{1}{[0,t]}^p \hspace{-0.05cm} \Big) \\
& \vdots \\
& \leq C_p^n \bigg( \INTSeg{L(s_n) \INTSeg{L(s_{n-1}) \dots \INTSeg{ \hspace{-0.05cm} L(s_1)W_p(\mu_1(s_1),\nu(s_1))}{s_1}{0}{s_2} \dots }{s_{n-1}}{0}{s_n}}{s_n}{0}{t} \bigg) \exp \Big( n \hspace{0.05cm} C_p' \NormL{l(\cdot)}{1}{[0,t]}^p \hspace{-0.1cm} \Big) \\
& \leq \frac{C_p^{n+1}}{n!} \bigg( \INTSeg{L(s)}{s}{0}{t} \bigg)^n \bigg( W_p(\mu^0,\nu(0)) + \INTSeg{\eta_R(s)}{s}{0}{t} + \Epazo_{\nu}(t,R) \bigg) \exp \Big( (n+1) \hspace{0.05cm} C_p' \NormL{l(\cdot)}{1}{[0,t]}^p \hspace{-0.05cm} \Big), \\
& = \frac{\chi_p(t)^n}{n!} \, C_p  \bigg( W_p(\mu^0,\nu(0)) + \INTSeg{\eta_R(s)}{s}{0}{t} + \Epazo_{\nu}(t,R) \bigg) \exp \Big( C_p' \NormL{l(\cdot)}{1}{[0,t]}^p \hspace{-0.05cm} \Big)
\end{aligned}
\end{equation}
for all times $t \in [0,T]$ and each $n \geq 1$, where we recall that $\chi_p(\cdot) \in L^{\infty}([0,T],\R_+)$ is defined as in \eqref{eq:ChiDef}. Whence, for any pair of integers $m,n \geq 1$, one may deduce from \eqref{eq:CauchyEst1} that 
\begin{equation}
\label{eq:CauchyEst1Bis}
\begin{aligned}
& \sup_{t \in [0,T]} W_p(\mu_n(t),\mu_{n+m}(t)) \\
& \leq \sup_{t \in [0,T]} \sum_{k=n}^{n+m-1} W_p(\mu_k(t),\mu_{k+1}(t)) \\
& \leq \Bigg( \sum_{k=n}^{n+m-1} \frac{\Norm{\chi_p(\cdot)}_{\infty}^k}{k!} \Bigg) C_p  \bigg( W_p(\mu^0,\nu(0)) + \INTSeg{\eta_{R}(s)}{s}{0}{T} + \Epazo_{\nu}(T,R) \bigg) \exp \Big( C_p' \NormL{l(\cdot)}{1}{[0,t]}^p \Big) \underset{m,n \to +\infty}{\longrightarrow} 0,
\end{aligned}
\end{equation}
which means that $(\mu_n(\cdot)) \subset \AC([0,T],\Pcal_p(\R^d))$ is a Cauchy sequence in $C^0([0,T],\Pcal_p(\R^d))$. Noting that the latter is a complete metric space as a consequence e.g. of \cite[Chapter 7 -- Theorem 12]{Kelley1975}, there exists some $\mu(\cdot) \in C^0([0,T],\Pcal_p(\R^d))$ such that 
\begin{equation}
\label{eq:ConvergenceMeas}
\sup_{t \in [0,T]} W_p(\mu_n(t),\mu(t)) ~\underset{n \to +\infty}{\longrightarrow}~ 0.  
\end{equation}
Besides, as a consequence of the equi-absolute continuity estimate established in \eqref{eq:InductionCond2} for the sequence $(\mu_n(\cdot)) \subset \AC([0,T],\Pcal_p(\R^d))$, the limit curve is absolutely continuous as well, with 
\begin{equation*}
W_p(\mu(\tau),\mu(t)) \leq c_p \INTSeg{m(s)}{s}{\tau}{t}
\end{equation*}
for all times $0 \leq \tau \leq t \leq T$. 

Concerning the sequence of Carathéodory vector fields $(v_n(\cdot)) \subset \Lcal([0,T],C^0(\R^d,\R^d))$, it directly follows from \eqref{eq:InductionCond1} together with the estimates in \eqref{eq:CauchyEst1Bis} that for each $m,n \geq 1$, there holds
\begin{equation}
\label{eq:CauchyEst2}
\begin{aligned}
\INTSeg{\dsf_{\sup}(v_n(t),v_{n+m}(t))}{t}{0}{T} & \leq \sum_{k=n}^{m+n-1} \INTSeg{\dsf_{\sup}(v_k(t),v_{k+1}(t))}{t}{0}{T} \\
& \leq \, \Norm{L(\cdot)}_1 \sup_{t \in [0,T]} \sum_{k=n}^{m+n-1} W_p(\mu_{k-1}(t),\mu_k(t)) ~\underset{m,n \to +\infty}{\longrightarrow}~ 0. 
\end{aligned}
\end{equation}
Hence, by Lemma \ref{lem:CompleteMetric}, there exists a map $v(\cdot) \in \Lcal([0,T],C^0(\R^d,\R^d))$ such that
\begin{equation}
\label{eq:ConvergenceVel}
\INTSeg{\dsf_{\sup}(v_n(t),v(t))}{t}{0}{T} ~\underset{n \to +\infty}{\longrightarrow}~ 0.
\end{equation}
This yields in particular the existence of a subsequence $(v_{n_k}(\cdot)) \subset \Lcal([0,T],C^0(\R^d,\R^d))$ satisfying
\begin{equation*}
\dsf_{\sup}(v_{n_k}(t),v(t)) ~\underset{n \to +\infty}{\longrightarrow}~ 0 
\end{equation*}
for $\Lcal^1$-almost every $t \in [0,T]$, which combined with \eqref{eq:InductionEst1} directly implies that the limit velocity field $v : [0,T] \times \R^d \mapsto \R^d$ satisfies the sublinearity and regularity estimates 
\begin{equation}
\label{eq:LimitVelEst}
|v(t,x)| \leq (1+\Cpazo) m(t) \big( 1 + |x| \big) \qquad \text{and} \qquad \Lip(v(t) \, ; \R^d) \leq l(t),
\end{equation}
for $\Lcal^1$-almost every $t \in [0,T]$ and all $x \in \R^d$. 


\paragraph*{Step 4 -- Properties of the limit trajectory-selection pair.} 

By combining the estimate \eqref{eq:WassEst} of Proposition \ref{prop:WassEst} with the stability inequality \eqref{eq:Gronwall2} in Proposition \ref{prop:Gronwall} as well as \eqref{eq:LimitVelEst}, it can be straightforwardly deduced from the convergence results of \eqref{eq:ConvergenceMeas} and \eqref{eq:ConvergenceVel} that the trajectory-selection pair $(\mu(\cdot),v(\cdot)) \in \AC([0,T],\Pcal_p(\R^d)) \times \Lcal([0,T],C^0(\R^d,\R^d))$ solves the Cauchy problem
\begin{equation}
\label{eq:LimitCauchy}
\left\{ 
\begin{aligned}
& \partial_t \mu(t) + \Div_x (v(t) \mu(t)) = 0, \\
& \mu(0) = \mu^0. 
\end{aligned}
\right.
\end{equation}
The next step in our argument consists in showing that the latter actually solves the continuity inclusion \eqref{eq:ContIncCauchy}. By \eqref{eq:InductionCond1}, the sequence of trajectory-selection pairs is such that 
\begin{equation}
\label{eq:GraphInc}
v_{n+1}(t) \in V(t,\mu_n(t))
\end{equation}
for $\Lcal^1$-almost every $t \in [0,T]$ and each $n \geq 1$. By \eqref{eq:ConvergenceMeas} and \eqref{eq:ConvergenceVel}, it further holds that
\begin{equation}
\label{eq:ConvergenceMeasPoint}
W_p(\mu_n(t),\mu(t)) ~\underset{n \to +\infty}{\longrightarrow}~ 0, 
\end{equation}
for all times $t \in [0,T]$, as well as
\begin{equation}
\label{eq:ConvergenceVelPoint}
\dsf_{\sup}(v_n(t),v(t)) ~\underset{n \to +\infty}{\longrightarrow}~ 0,
\end{equation}
for $\Lcal^1$-almost every $t \in [0,T]$, along a subsequence that we do not relabel. Whence, upon combining \eqref{eq:GraphInc} with the pointwise convergence results \eqref{eq:ConvergenceMeasPoint} and \eqref{eq:ConvergenceVelPoint} and Hypothesis \ref{hyp:CICL}-$(iv)$, we obtain 
\begin{equation*}
v(t) \in V(t,\mu(t))
\end{equation*}
for $\Lcal^1$-almost every $t \in [0,T]$, which in light of Definition \ref{def:ContInc} implies together with \eqref{eq:LimitCauchy} that $\mu(\cdot) \in \AC([0,T],\Pcal_p(\R^d))$ is a solution \eqref{eq:ContIncCauchy}, driven by the selection $t \in [0,T] \mapsto v(t) \in V(t,\mu(t))$.  

Now, there only remains to derive the distance and velocity estimates displayed in \eqref{eq:FilippovEstimate1} and \eqref{eq:FilippovEstimate2}. To this end, note that the chain of pointwise inequalities of \eqref{eq:CauchyEst1} implies 
\begin{equation*}
\begin{aligned}
W_p(\mu_n(t),\nu(t)) & \leq W_p(\mu_1(t),\nu(t)) + \sum_{k=1}^{n-1}  W_p(\mu_k(t),\mu_{k+1}(t)) \\
& \leq \Bigg( 1 + \sum_{k=1}^{n-1} \frac{\chi_p(t)^k}{k!}\Bigg) C_p \bigg( W_p(\mu^0,\nu(0)) + \INTSeg{\eta_R(s)}{s}{0}{t} + \Epazo_{\nu}(t,R) \bigg) \exp \big( C_p' \Norm{l(\cdot)}_1^p \big) \\
& \leq C_p \bigg( W_p(\mu^0,\nu(0)) + \INTSeg{\eta_R(s)}{s}{0}{t} + \Epazo_{\nu}(t,R)\bigg) \exp \Big( C_p' \Norm{l(\cdot)}_1^p + \chi_p(t) \Big) \\
& = \Dpazo_p(t,R), 
\end{aligned}
\end{equation*}
for all times $t \in [0,T]$ and each $n \geq 1$, with $\Dpazo_p(\cdot)$ being defined as in \eqref{eq:DpDef}. By letting $n \rightarrow +\infty$ in the previous expression, we directly obtain \eqref{eq:FilippovEstimate1}. Concerning the velocity estimates, it holds as a consequence of \eqref{eq:InductionCond1}, \eqref{eq:FirstVelIneq} and \eqref{eq:EstimatesMu12} that
\begin{equation*}
\begin{aligned}
\NormL{v_n(t) - w(t)}{\infty}{B(0,R),\R^d;\, \nu(t)} & \leq \, \NormL{v_1(t) - w(t)}{\infty}{B(0,R),\R^d;\, \nu(t)} + \sum_{k=1}^{n-1} \dsf_{\sup}(v_{k+1}(t),v_k(t)) \\
& \leq \eta_R(t) + L(t) \sum_{k=1}^{n-1} W_p(\mu_{k-1}(t),\mu_k(t)) \\
& \leq \eta_R(t) + L(t) \Dpazo_p(t,R), 
\end{aligned}
\end{equation*}
for $\Lcal^1$-almost every $t \in [0,T]$ and each $n \geq 1$. We obtain \eqref{eq:FilippovEstimate2} by taking the limit as $n \to +\infty$ along a suitable subsequence in the previous expression, which concludes the proof of Theorem \ref{thm:Filippov}.
\end{proof}

In the following corollary, we state a global version of the Filippov estimates which can be obtained by a simple adaptation of the proof of Theorem \ref{thm:Filippov}.

\begin{cor}[Global version of Filippov estimates]
\label{cor:GlobalFilippov}
Suppose that the assumptions of Theorem \ref{thm:Filippov} hold, and in addition that the \textnormal{global mismatch function}, defined by 
\begin{equation*}
\eta(t) := \dist_{L^{\infty}(\R^d,\R^d ;\;  \nu(t))} \Big( w(t) \, ; V(t,\nu(t)) \Big)
\end{equation*}
for $\Lcal^1$-almost every $t \in [0,T]$, is Lebesgue integrable. Then for every $\mu^0 \in \Pcal_p(\R^d)$, there exists a trajectory-selection pair $(\mu(\cdot),v(\cdot)) \in \AC([0,T],\Pcal_p(\R^d)) \times \Lcal([0,T],C^0(\R^d,\R^d))$ solution of the Cauchy problem \eqref{eq:ContIncCauchy} which satisfies the distance estimate 
\begin{equation*}
W_p(\mu(t),\nu(t)) \leq  C_p \bigg( W_p(\mu^0,\nu(0)) + \INTSeg{\eta(s)}{s}{0}{t} \bigg) \exp \Big( C_p' \NormL{l(\cdot)}{1}{[0,t]}^p + \, \chi_p(t) \Big),  
\end{equation*}
for all times $t \in [0,T]$, as well as the velocity estimate 
\begin{equation*}
\begin{aligned}
& \NormL{v(t) - w(t)}{\infty}{\R^d,\R^d;\, \nu(t)} \\
& \hspace{2cm} \leq \eta(t) + L(t) C_p \bigg( W_p(\mu^0,\nu(0))+ \INTSeg{\eta(s)}{s}{0}{t} \bigg) \exp \Big( C_p' \NormL{l(\cdot)}{1}{[0,t]}^p+ \, \chi_p(t) \Big), 
\end{aligned}
\end{equation*}
for $\Lcal^1$-almost every $t \in [0,T]$. 
\end{cor}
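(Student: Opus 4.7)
The plan is to replay verbatim the constructive argument developed in the four steps of the proof of Theorem \ref{thm:Filippov}, while systematically substituting the localised $L^\infty(B(0,R),\R^d;\nu(t))$-norms by the global $L^\infty(\R^d,\R^d;\nu(t))$-norms. The central observation is that the Lebesgue integrability of the global mismatch function $\eta(\cdot)$ is precisely what enables one to apply the global stability estimate \eqref{eq:Gronwall1} of Proposition \ref{prop:Gronwall} in place of its localised counterpart \eqref{eq:Gronwall2}, thereby erasing the error term $\Epazo_{\nu}(t,R)$ from the final distance bound.

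Concretely, in Step~1 of the initialisation, I would replace the Carathéodory functional $v \mapsto \NormL{v-w(t)}{\infty}{B(0,R),\R^d;\nu(t)}$ by $v \mapsto \NormL{v-w(t)}{\infty}{\R^d,\R^d;\nu(t)}$, and verify that it remains Carathéodory on $[0,T] \times C^0(\R^d,\R^d)$ (this uses that the supremum can be written as the countable supremum of the continuous localised norms over balls $B(0,k)$). Then Lemma \ref{lem:MeasurableSel}-(b) applied to the set-valued map $t \rightrightarrows V(t,\nu(t)) \cap \{v \, ~\textnormal{s.t.}~ \NormL{v-w(t)}{\infty}{\R^d,\R^d;\nu(t)} = \eta(t)\}$ yields a measurable selection $v_1(\cdot)$ achieving the global distance $\eta(t)$. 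Feeding this into the global stability estimate \eqref{eq:Gronwall1} of Proposition \ref{prop:Gronwall} gives directly
\[
W_p(\mu_1(t),\nu(t)) \leq C_p \bigg( W_p(\mu^0,\nu(0)) + \INTSeg{\eta(s)}{s}{0}{t} \bigg) \exp \Big( C_p' \NormL{l(\cdot)}{1}{[0,t]}^p \Big),
\]
with no tail contribution.

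Step~2 (construction of the subsequent pairs) and Step~3 (the Cauchy estimates \eqref{eq:CauchyEst1} and \eqref{eq:CauchyEst2}) then go through unaltered, since they rely solely on hypothesis \ref{hyp:CICL}-$(iv)$ through the extended supremum metric $\dsf_{\sup}(\cdot,\cdot)$ and on the global estimate \eqref{eq:Gronwall1}. The chained iteration produces the same telescoping sum, but with the initial term $W_p(\mu^0,\nu(0)) + \int_0^t \eta(s) \dn s$ in place of the one involving $\eta_R$ and $\Epazo_\nu(t,R)$. Summing the resulting series and passing to the limit as in Step~4 delivers the claimed global distance bound
\[
W_p(\mu(t),\nu(t)) \leq C_p \bigg( W_p(\mu^0,\nu(0)) + \INTSeg{\eta(s)}{s}{0}{t} \bigg) \exp \Big( C_p' \NormL{l(\cdot)}{1}{[0,t]}^p + \chi_p(t) \Big),
\]
while the velocity estimate follows by plugging this distance bound into the pointwise chain
\[
\NormL{v_n(t)-w(t)}{\infty}{\R^d,\R^d;\nu(t)} \leq \eta(t) + L(t) \sum_{k=1}^{n-1} W_p(\mu_{k-1}(t),\mu_k(t))
\]
and passing to the limit along the subsequence constructed in Step~3.

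The only genuinely new verification — which I expect to be the main (mild) obstacle — is the measurability of the functional $(t,v) \mapsto \NormL{v-w(t)}{\infty}{\R^d,\R^d;\nu(t)}$ and the fact that the intersection set-valued map of Lemma \ref{lem:MeasurableSel}-(b) has nonempty images; the former follows from writing the global essential supremum as a monotone countable supremum of the localised versions (which are already known to be Carathéodory from the proof of Theorem \ref{thm:Filippov}), and the latter from the compactness of $V(t,\nu(t)) \subset C^0(\R^d,\R^d)$ combined with the lower semicontinuity of the global norm functional with respect to $\dcc(\cdot,\cdot)$. Everything else is a direct transcription of the argument of Theorem \ref{thm:Filippov}.
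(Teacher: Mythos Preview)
Your proposal is correct and follows exactly the approach the paper takes: its proof is the single sentence ``One can simply repeat the proof strategy of Theorem \ref{thm:Filippov} with $R = +\infty$ and $\Epazo(t,R) = 0$,'' which is precisely what you have unpacked in detail. Your identification of the one genuinely new point---writing the global $L^\infty(\R^d,\R^d;\nu(t))$-norm as a monotone countable supremum of the localised Carath\'eodory versions so as to invoke Lemma \ref{lem:MeasurableSel}-$(b)$---is accurate and handled correctly.
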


\begin{proof}
One can simply repeat the proof strategy of Theorem \ref{thm:Filippov} with $R = +\infty$ and $\Epazo_{\nu}(t,R) = 0$.
\end{proof}


\subsection{Compactness of the solution set and relaxation property}
\label{subsection:Compactness}

In this section, we investigate the topological properties of the solution set associated with the set-valued Cauchy problem \eqref{eq:ContIncCauchy}, which is defined by 
\begin{equation}
\label{eq:SolutionSet}
\Spazo_{[0,T]}(\mu^0) := \bigg\{ \mu(\cdot) \in \AC([0,T],\Pcal_p(\R^d)) ~~\textnormal{s.t.\; $\mu(\cdot)$ is a solution of \eqref{eq:ContIncCauchy} with $\mu(0) = \mu^0$} \bigg\}
\end{equation}
for each $\mu^0 \in \Pcal_p(\R^d)$. Our first result establishes that $\Spazo_{[0,T]}(\mu^0)$ is compact for the topology of uniform convergence when $V : [0,T] \times \Pcal_p(\R^d) \tto C^0(\R^d,\R^d)$ has convex images. Its proof it not written in full details, as it is based on simpler variants of the arguments subtending Theorem \ref{thm:Peano}.

\begin{thm}[Compactness of the solution set]
\label{thm:Compactness}
Let $\mu^0 \in \Pcal_p(\R^d)$ and $V : [0,T] \times \Pcal_p(\R^d) \tto C^0(\R^d,\R^d)$ be a set-valued map satisfying Hypotheses \ref{hyp:CICL} and whose images are convex. Then, the solution set $\Spazo_{[0,T]}(\mu^0) \subset C^0([0,T],\Pcal_p(\R^d))$ associated with the Cauchy problem \eqref{eq:ContIncCauchy} is compact for the topology of uniform convergence. 
\end{thm}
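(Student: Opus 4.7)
The plan is to start from an arbitrary sequence $(\mu_n(\cdot)) \subset \Spazo_{[0,T]}(\mu^0)$ equipped with driving selections $t \in [0,T] \mapsto v_n(t) \in V(t,\mu_n(t))$, and extract a subsequence converging uniformly to an element of $\Spazo_{[0,T]}(\mu^0)$. First, hypothesis \ref{hyp:CICL}-$(ii)$ combined with the refined momentum inequality of Remark \ref{rmk:Momentum} and a Gr\"onwall iteration (as in Step 2 of the proof of Theorem \ref{thm:Peano}) yields a constant $\Cpazo > 0$, depending only on $p$, $\Mpazo_p(\mu^0)$ and $\Norm{m(\cdot)}_1$, such that $\Mpazo_p(\mu_n(t)) \leq \Cpazo$ for all $t \in [0,T]$ and each $n \geq 1$. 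This upgrades \ref{hyp:CICL}-$(ii)$ to the uniform sublinearity estimate $|v_n(t,x)| \leq (1+\Cpazo) m(t)(1+|x|)$, which by Theorem \ref{thm:Wellposedness} produces the equi-absolute continuity bound $W_p(\mu_n(\tau), \mu_n(t)) \leq c_p \INTSeg{m(s)}{s}{\tau}{t}$. Combined with the equi-integrability estimate \eqref{eq:UnifIntegEst} of Proposition \ref{prop:Momentum} and the compactness criterion of Proposition \ref{prop:WassDef}, this places all the $\mu_n(t)$ in a single compact set $\Kcal \subset \Pcal_p(\R^d)$. A standard application of the Ascoli-Arzel\`a theorem in the complete metric space $C^0([0,T], \Pcal_p(\R^d))$ then extracts a (non-relabelled) subsequence such that $\sup_{t \in [0,T]} W_p(\mu_n(t), \mu(t)) \to 0$ for some $\mu(\cdot) \in \AC([0,T], \Pcal_p(\R^d))$.

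Next, I would handle the velocities. Hypothesis \ref{hyp:CICL}-$(iii)$ ensures $\Lip(v_n(t); \R^d) \leq l(t)$ for $\Lcal^1$-almost every $t \in [0,T]$, so together with the uniform sublinearity, the restrictions $v_n(t)|_{B(0,R)}$ lie, for each $R > 0$ and $\Lcal^1$-almost every $t$, in a fixed compact subset $\Kpazo_t^R \subset C^0(B(0,R), \R^d)$ by Theorem \ref{thm:Ascoli}. Applying Theorem \ref{thm:L1WeakCompactness} with $X := C^0(B(0,R), \R^d)$ and majorant $(1+\Cpazo)(1+R) m(\cdot) \in L^1([0,T], \R_+)$ extracts a subsequence (depending on $R$) converging weakly in $L^1([0,T], C^0(B(0,R), \R^d))$, and a diagonal extraction through $R \in \N$ yields a single $v(\cdot) \in \Lcal([0,T], C^0(\R^d, \R^d))$ along which $v_n \rightharpoonup v$ weakly in $L^1([0,T], C^0(B(0,R), \R^d))$ for every $R > 0$. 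To pass to the limit in the PDE, for any test function $\phi \in C_c^{\infty}((0,T) \times \R^d, \R)$ whose spatial support lies in some $B(0,R)$, I would decompose
\[
\INTSeg{\INTDom{\langle \nabla_x \phi, v_n \rangle}{\R^d}{\mu_n(t)(x)}}{t}{0}{T} - \INTSeg{\INTDom{\langle \nabla_x \phi, v \rangle}{\R^d}{\mu(t)(x)}}{t}{0}{T} = I_n + J_n,
\]
with $I_n := \INTSeg{\INTDom{\langle \nabla_x \phi, v_n \rangle}{\R^d}{(\mu_n - \mu)(t)(x)}}{t}{0}{T}$ and $J_n := \INTSeg{\INTDom{\langle \nabla_x \phi, v_n - v \rangle}{\R^d}{\mu(t)(x)}}{t}{0}{T}$. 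The term $I_n \to 0$ follows from Proposition \ref{prop:WassEst}, the uniform $W_p$-convergence of $\mu_n$, and the equi-Lipschitz continuity of $v_n$ on $B(0,R)$; the term $J_n \to 0$ follows from the weak $L^1$-convergence of $v_n$ tested against $\Bnu(t) := \nabla_x \phi(t,\cdot) \mu(t) \in L^{\infty}([0,T], \Mcal(B(0,R), \R^d))$, viewed as an element of the dual of $L^1([0,T], C^0(B(0,R), \R^d))$.

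The hard part is then to verify $v(t) \in V(t, \mu(t))$ for $\Lcal^1$-almost every $t$, since the inclusions $v_n(t) \in V(t, \mu_n(t))$ must be transferred from the moving reference $\mu_n(t)$ to the fixed limit $\mu(t)$. Using hypothesis \ref{hyp:CICL}-$(iv)$ in conjunction with Lemma \ref{lem:MeasurableSel}-$(a)$ applied to the Carath\'eodory map $(t,w) \mapsto \dsf_{\sup}(v_n(t),w)$, I would construct, for each $n$, an $\Lcal^1$-measurable selection $t \mapsto \tilde{v}_n(t) \in V(t, \mu(t))$ satisfying $\dsf_{\sup}(v_n(t), \tilde{v}_n(t)) \leq L(t) W_p(\mu_n(t), \mu(t))$; the uniform convergence of $\mu_n$ to $\mu$ together with Lebesgue's dominated convergence theorem then force $\tilde v_n \rightharpoonup v$ weakly in $L^1([0,T], C^0(B(0,R), \R^d))$ for every $R > 0$ as well. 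The set
\[
\Vcal_R := \bigg\{ w(\cdot) \in L^1([0,T], C^0(B(0,R),\R^d)) \, ~\textnormal{s.t.}~ w(t) \in V(t, \mu(t))_{|B(0,R)} ~ \text{for $\Lcal^1$-almost every $t \in [0,T]$} \bigg\}
\]
is convex by the convexity hypothesis on $V$, and strongly closed in $L^1([0,T], C^0(B(0,R),\R^d))$ (a strongly convergent sequence admits an $\Lcal^1$-almost everywhere convergent subsequence, and $V(t,\mu(t))$ is closed). Hence $\Vcal_R$ is weakly closed by Mazur's lemma, which forces $v|_{B(0,R)} \in \Vcal_R$, and letting $R \to \infty$ through $\N$ gives the desired pointwise inclusion. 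This final step, where convexity of the images of $V$ is indispensable to convert a nonlinear pointwise inclusion along a moving reference into a closed convex constraint along the fixed limit curve, is the main obstacle of the argument.
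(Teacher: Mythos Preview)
Your proposal is correct and follows essentially the same approach as the paper: uniform momentum and equi-integrability bounds yield Ascoli--Arzel\`a compactness for $(\mu_n(\cdot))$, weak $L^1$-compactness on balls via Theorem \ref{thm:L1WeakCompactness} handles the velocities, the $I_n + J_n$ decomposition passes to the limit in the PDE, and the transfer $v_n \mapsto \tilde v_n \in V(t,\mu(t))$ via hypothesis \ref{hyp:CICL}-$(iv)$ together with Mazur's lemma on the convex sets $\Vcal_R$ closes the argument. One small terminological point: the map $(t,w) \mapsto \dsf_{\sup}(v_n(t),w)$ is not Carath\'eodory (it is only lower-semicontinuous in $w$ and may take the value $+\infty$), but it is a countable supremum of Carath\'eodory maps, which is exactly the setting of Lemma \ref{lem:MeasurableSel}-$(a)$, so your invocation of that lemma is correct.
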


\begin{proof}
Given a sequence $(\mu_n(\cdot),v_n(\cdot)) \subset \AC([0,T],\Pcal_p(\R^d)) \times \Lcal([0,T],C^0(\R^d,\R^d))$ of trajectory-selection pairs for the Cauchy problem \eqref{eq:ContIncCauchy}, one may repeat the arguments of Step 3 in the proof of Theorem \ref{thm:Peano} while using \ref{hyp:CICL}-$(i)$ along with Remark \ref{rmk:Moment} to show the existence a compact set $\Kcal \subset \Pcal_p(\R^d)$ and a constant $c_p > 0$, both depending only on $\mu^0,p$ and $\Norm{m(\cdot)}_1$, such that
\begin{equation*}
\mu_n(t) \in \Kcal \qquad \text{and} \qquad W_p(\mu_n(\tau),\mu_n(t)) \leq c_p \INTSeg{m(s)}{s}{\tau}{t}
\end{equation*}
for all times $0 \leq \tau \leq t \leq T$ and each $n \geq 1$. Hence, by the Ascoli-Arzel\`a theorem, there exists a curve $\mu(\cdot) \in \AC([0,T],\Pcal_p(\R^d))$ such that 
\begin{equation}
\label{eq:AscoliMeasures}
\sup_{t \in [0,T]} W_p(\mu_n(t),\mu(t)) ~\underset{n \to +\infty}{\longrightarrow}~ 0,
\end{equation}
along a subsequence that we do not relabel. By following again the compactness and diagonal argument of Step 3 in the proof of Theorem \ref{thm:Peano}, one can show that there exists a Carathéodory vector field $v(\cdot) \in \Lcal([0,T],C^0(\R^d,\R^d))$ such that
\begin{equation}
\label{eq:WeakConvergenceVel0}
v_n(\cdot) ~\underset{n \to +\infty}{\rightharpoonup}~ v(\cdot)
\end{equation}
weakly in $L^1([0,T],C^0(B(0,R),\R^d))$ for every $R>0$, along possibly different subsequences. In particular, it follows from a direct application of Mazur's lemma that
\begin{equation}
\label{eq:VelEstCompact}
|v(t,x)| \leq m(t) \Big( 1 + |x| + \Mpazo_p(\mu(t)) \Big) \qquad \text{and} \qquad \Lip(v(t) \, ; \R^d) \leq l(t), 
\end{equation}
for $\Lcal^1$-almost every $t \in [0,T]$ and all $x \in \R^d$. We now prove that $(\mu(\cdot),v(\cdot)) \in \AC([0,T],\Pcal_p(\R^d)) \times \Lcal([0,T],C^0(\R^d,\R^d))$ is a solution of  \eqref{eq:ContIncCauchy}. Upon combining the convergence result of \eqref{eq:AscoliMeasures} together with \eqref{eq:VelEstCompact} and Proposition \ref{prop:WassEst}, it is clear that 
\begin{equation}
\label{eq:WeakConvergenceVel1}
\INTSeg{\INTDom{\partial_t \phi(t,x)}{\R^d}{\mu_n(t)(x)}}{t}{0}{T} ~\underset{n \to +\infty}{\longrightarrow}~ \INTSeg{\INTDom{\partial_t \phi(t,x)}{\R^d}{\mu(t)(x)}}{t}{0}{T}
\end{equation}
and
\begin{equation}
\label{eq:WeakConvergenceVel2}
\INTSeg{\INTDom{\big\langle \nabla_x \phi(t,x) , v_n(t,x) \big\rangle}{\R^d}{\big( \mu(t) - \mu_n(t) \big)(x)}}{t}{0}{T} ~\underset{n \to +\infty}{\longrightarrow}~ 0
\end{equation}
for each $\phi \in C^{\infty}_c((0,T) \times \R^d,\R)$. Moreover, by choosing test functions of the form 
\begin{equation}
\label{eq:TestFunction}
\phi(t,x) := \zeta(t) \psi(x), 
\end{equation}
for all $(t,x) \in [0,T] \times \R^d$, with $(\zeta,\psi) \in C^{\infty}_c((0,T),\R) \times \in C^{\infty}_c(\R^d,\R)$, and then setting $\Bnu(t) := \zeta(t) \nabla \psi \cdot \mu(t) \in \Mcal(\supp(\psi),\R^d)$ for all times $t \in [0,T]$, it follows from \eqref{eq:WeakConvergenceVel0} that
\begin{equation}
\label{eq:WeakConvergenceVel3}
\INTSeg{\zeta(t) \INTDom{\big\langle \nabla \psi(x) , v(t,x) - v_n(t,x) \big\rangle}{\R^d}{\mu(t)(x)}}{t}{0}{T} ~\underset{n \to +\infty}{\longrightarrow}~ 0, 
\end{equation}
along a subsequence that depends on $\supp(\psi) \subset \R^d$. Therefore, by merging the convergence results of \eqref{eq:WeakConvergenceVel1}, \eqref{eq:WeakConvergenceVel2} and \eqref{eq:WeakConvergenceVel3} while recalling that the linear span of test functions of the form \eqref{eq:TestFunction} is dense in $C^{\infty}_c((0,T) \times \R^d,\R)$, we finally obtain that
\begin{equation*}
\INTSeg{\INTDom{\Big( \partial_t \phi(t,x) + \big\langle \nabla_x \phi(t,x) , v(t,x) \big\rangle \Big)}{\R^d}{\mu(t)(x)}}{t}{0}{T} = 0
\end{equation*}
for all $\phi \in C^{\infty}_c((0,T) \times \R^d,\R^d)$, which together with the fact that $\mu(0) = \mu^0$ as a consequence of \eqref{eq:AscoliMeasures} equivalently means that the pair $(\mu(\cdot),v(\cdot)) \in \AC([0,T],\Pcal_p(\R^d)) \times \Lcal([0,T],C^0(\R^d,\R^d))$ solves 
\begin{equation}
\label{eq:CompactnessIntermCauchy}
\left\{
\begin{aligned}
& \partial_t \mu(t) + \Div_x(v(t) \mu(t)) = 0, \\
& \mu(0) = \mu^0.
\end{aligned}
\right.
\end{equation}
To conclude, there simply remains to show that $v(t) \in V(t,\mu(t))$ for $\Lcal^1$-almost every $t \in [0,T]$. To this end, observe that as a consequence of Hypothesis \ref{hyp:CICL}-$(iv)$ combined with Lemma \ref{lem:MeasurableSel}-$(a)$ and $(c)$, there exists for each $n \geq 1$ a measurable selection
\begin{equation}
\label{eq:TildeVnDef}
t \in [0,T] \mapsto \tilde{v}_n(t) \in V(t,\mu(t)), 
\end{equation}
which satisfies
\begin{equation*}
\dsf_{\sup}(\tilde{v}_n(t),v_n(t)) \leq L(t) W_p(\mu_n(t),\mu(t)) ~\underset{n \to +\infty}{\longrightarrow}~ 0
\end{equation*}
for $\Lcal^1$-almost every $t \in [0,T]$, where we also used \eqref{eq:AscoliMeasures}. Whence, by a direct application of Lebesgue's dominated convergence theorem, it necessarily follows that 
\begin{equation*}
\INTSeg{\NormC{\tilde{v}_n(t) - v_n(t)}{0}{B(0,R),\R^d}}{t}{0}{T} ~\underset{n \to +\infty}{\longrightarrow}~ 0
\end{equation*}
for each $R>0$, which combined with \eqref{eq:WeakConvergenceVel0} implies in particular that
\begin{equation}
\label{eq:TildeVnConv}
\tilde{v}_n(\cdot) ~\underset{n \to +\infty}{\rightharpoonup}~ v(\cdot) 
\end{equation}
weakly in $L^1([0,T],C^0(B(0,R),\R^d))$, along adequate subsequences which depend on $R>0$. Repeating the reasoning at the end of Step 5 in the proof of Theorem \ref{thm:Peano}, one can finally prove $v(t) \in V(t,\mu(t))$ for $\Lcal^1$-almost every $t \in [0,T]$, which together with \eqref{eq:CompactnessIntermCauchy} concludes the proof of Theorem \ref{thm:Compactness}.
\end{proof}

The compactness result established in Theorem \ref{thm:Compactness} crucially relies on the convexity of the admissible velocities, and does not hold anymore without this assumption. In the following theorem, we show that it is nonetheless possible to precisely characterise the closure of the solution of the solution set in the absence of convexity, and that the latter coincides with that of the convexified dynamics. 

\begin{thm}[Relaxation property for continuity inclusions]
\label{thm:Relaxation}
Let $\mu^0 \in \Pcal_p(\R^d)$ and $V : [0,T] \times \Pcal_p(\R^d) \tto C^0(\R^d,\R^d)$ be a set-valued map satisfying Hypotheses \ref{hyp:CICL}. Then, for every solution $\mu(\cdot) \in \AC([0,T],\Pcal_p(\R^d))$ of the relaxed Cauchy problem
\begin{equation}
\label{eq:RelaxedContInc}
\left\{
\begin{aligned}
& \partial_t \mu(t) \in - \Div_x \Big( \co V(t,\mu(t)) \mu(t) \Big), \\
& \mu(0) = \mu^0,
\end{aligned}
\right.
\end{equation}
and any $\delta > 0$, there exists a solution $\mu_{\delta}(\cdot) \in \AC([0,T],\Pcal_p(\R^d))$ of the Cauchy problem
\begin{equation}
\label{eq:UnRelaxedContInc}
\left\{
\begin{aligned}
& \partial_t \mu_{\delta}(t) \in - \Div_x \Big( V(t,\mu_{\delta}(t)) \mu_{\delta}(t) \Big), \\
& \mu_{\delta}(0) = \mu^0,
\end{aligned}
\right.
\end{equation}
which satisfies
\begin{equation*}
\sup_{t \in [0,T]} W_p(\mu(t),\mu_{\delta}(t)) \leq \delta. 
\end{equation*}
In particular, the solution set of \eqref{eq:UnRelaxedContInc} is dense in that of \eqref{eq:RelaxedContInc} for the topology of uniform convergence over $C^0([0,T],\Pcal_p(\R^d))$.
\end{thm}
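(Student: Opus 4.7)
The plan is to adapt the classical Filippov--Wa\.{z}ewski relaxation argument for ODE inclusions (see e.g.\ \cite[Section 10.4]{Aubin1990}) to the Wasserstein setting, by combining a time discretisation with Aumann's convexity theorem (Theorem \ref{thm:Aumann}), applied in the separable Banach space $C^0(B(0,R),\R^d)$, and the Filippov estimates of Theorem \ref{thm:Filippov}. Given a solution $\mu(\cdot) \in \AC([0,T],\Pcal_p(\R^d))$ of the relaxed Cauchy problem driven by a measurable selection $v(t) \in \co V(t,\mu(t))$, observe first that $v(\cdot,\cdot)$ inherits from \ref{hyp:CICL}-$(ii)$-$(iii)$ the sublinearity bound $m(\cdot)$ and the Lipschitz constant $l(\cdot)$, since both conditions are stable under closed convex hulls, so that $v(\cdot,\cdot)$ fits into \ref{hyp:CE}. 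Using the equi-integrability estimate of Proposition \ref{prop:Momentum}, fix once and for all $R > 0$ so large that the tail term $\Epazo_\nu(T,R)$ appearing in \eqref{eq:DpDef} is smaller than $\delta/(3C_p \exp(C_p'\Norm{l(\cdot)}_1^p + \chi_p(T)))$, and denote by $\Cpazo > 0$ the global constant from Proposition \ref{prop:Momentum} controlling the momenta involved.

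For a large integer $N$ to be chosen later, partition $[0,T]$ into subintervals $I_k = [\tau_k,\tau_{k+1}]$ of common length $h = T/N$. On each $I_k$, the set-valued map $t \rightrightarrows V(t,\mu(t))_{|B(0,R)}$ takes values in the separable Banach space $X = C^0(B(0,R),\R^d)$, has closed images (being bounded and equi-Lipschitz on $B(0,R)$), is $\Lcal^1$-measurable by Lemma \ref{lem:MeasurableSel}-$(c)$ since $V(\cdot,\cdot)$ is Carathéodory under \ref{hyp:CICL}-$(i)$ and \ref{hyp:CICL}-$(iv)$, and is integrably bounded by $k(t) := m(t)(1+R+\Cpazo)$. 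As $v(t)_{|B(0,R)} \in \co V(t,\mu(t))_{|B(0,R)}$, Theorem \ref{thm:Aumann} yields a measurable selection $\tilde v(\cdot) \in V(\cdot,\mu(\cdot))$ on $I_k$ satisfying
\[
\Bigl\| \textstyle\int_{\tau_k}^{\tau_{k+1}} \bigl( v(s) - \tilde v(s) \bigr)\,ds \Bigr\|_{C^0(B(0,R),\R^d)} \leq \tfrac{\delta}{N}.
\]
Concatenating across $k$, we obtain $\tilde v(\cdot) \in \Lcal([0,T],C^0(\R^d,\R^d))$ such that $\tilde v(t) \in V(t,\mu(t))$ for almost every $t \in [0,T]$, and which still complies with the sublinearity and Lipschitz bounds inherited from \ref{hyp:CICL}.

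Let $\tilde \mu(\cdot) \in \AC([0,T],\Pcal_p(\R^d))$ be the unique solution of $\partial_t \tilde \mu + \Div_x(\tilde v \tilde \mu) = 0$ with $\tilde \mu(0) = \mu^0$ given by Theorem \ref{thm:Wellposedness}, and observe that $\mu(t) = \Phi^v_{(0,t)\sharp} \mu^0$, $\tilde \mu(t) = \Phi^{\tilde v}_{(0,t)\sharp}\mu^0$. The goal is to show that $\sup_{t} W_p(\mu(t),\tilde \mu(t)) \leq \delta/2$. Using the transport plan induced by the pair of flows, bound
\[
W_p(\mu(t),\tilde\mu(t))^p \leq \INTDom{|\Phi^v_{(0,t)}(x) - \Phi^{\tilde v}_{(0,t)}(x)|^p}{\R^d}{\mu^0(x)},
\]
and control the pointwise discrepancy by Grönwall's lemma with forcing term $\int_0^t (v-\tilde v)(s,\Phi^v_{(0,s)}(x))\,ds$. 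On each $I_k$, split this integral into a frozen contribution obtained by replacing $\Phi^v_{(0,s)}(x)$ with $\Phi^v_{(0,\tau_k)}(x)$ and controlled by $\delta/N$ via Step 2 (whenever the frozen point lies in $B(0,R)$), plus a modulus-of-continuity correction of magnitude $O(h \, l(\cdot))$ absorbed by the flow sublinearity bound. Summing over $k$ and using the equi-integrability of Proposition \ref{prop:Momentum} to handle trajectories escaping $B(0,R)$, we obtain the desired $W_p$-closeness for $N$ large enough.

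Finally, we apply Theorem \ref{thm:Filippov} with reference pair $(\nu,w) := (\tilde\mu,\tilde v)$ and localisation radius $R$. Since $\tilde v(t) \in V(t,\mu(t))$, hypothesis \ref{hyp:CICL}-$(iv)$ yields $\dist_{C^0(\R^d,\R^d)}(\tilde v(t);V(t,\tilde \mu(t))) \leq L(t) W_p(\mu(t),\tilde\mu(t))$, so the localised mismatch function $\eta_R(\cdot)$ from \eqref{eq:mismatch} is small. Filippov's estimate then produces $\mu_\delta(\cdot) \in \Spazo_{[0,T]}(\mu^0)$ with $\sup_t W_p(\mu_\delta(t),\tilde\mu(t)) \leq \delta/2$, and the triangle inequality gives $\sup_t W_p(\mu_\delta(t),\mu(t)) \leq \delta$; the density statement follows since any element of $\Spazo_{[0,T]}(\mu^0)$ is a fortiori a solution of the relaxed problem. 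The main obstacle is the third step: translating the per-subinterval Aumann estimate, which only controls integrated velocity differences in $C^0(B(0,R),\R^d)$ rather than pointwise norms, into honest Wasserstein closeness of the two flows $\Phi^v$ and $\Phi^{\tilde v}$. This is precisely where the non-uniform, local nature of the compact-convergence metric $\dcc(\cdot,\cdot)$ bites, and where the tail-aware stability estimate \eqref{eq:Gronwall2} of Proposition \ref{prop:Gronwall} and the localised Filippov estimate \eqref{eq:FilippovEstimate1} must be delicately coordinated with the choice of $N$ and $R$.
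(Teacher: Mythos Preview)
Your proposal is correct and follows essentially the same three-step strategy as the paper's proof: apply Aumann's theorem in $C^0(B(0,R),\R^d)$ on a fine partition to build a selection $\tilde v(t)\in V(t,\mu(t))$, estimate $W_p(\mu(t),\tilde\mu(t))$ via a freezing-plus-Gr\"onwall argument along the characteristic curves (the paper does this through the superposition plan of Lemma \ref{lem:SuperpositionPlan} rather than flows, but since both $v$ and $\tilde v$ are Lipschitz this is the same thing), and then invoke Theorem \ref{thm:Filippov} with the mismatch controlled by \ref{hyp:CICL}-$(iv)$. The only cosmetic differences are that the paper chooses the partition so that $\int_{t_i}^{t_{i+1}} m(s)\,ds$ is small (rather than equal-length subintervals, which also works by absolute continuity of the integral of $m\in L^1$) and makes the lifting of the Aumann selection from $V(t,\mu(t))_{|B(0,R)}$ back to $V(t,\mu(t))$ explicit via Lemma \ref{lem:MeasurableSel}-$(a)$.
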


The proof of Theorem \ref{thm:Relaxation} will be split into three steps. In Step 1, we start by choosing a solution of \eqref{eq:RelaxedContInc} and an adequate subdivision of the interval $[0,T]$ that we use along with Aumann's theorem to define an intermediate curve, of which it is shown in Step 2 that it is close to the original one in $C^0([0,T],\Pcal_p(\R^d))$. We then conclude in Step 3 by applying the generalised Filippov estimates of Theorem \ref{thm:Filippov}, which provides us with a solution of the Cauchy problem \eqref{eq:UnRelaxedContInc} that remains sufficiently close to the intermediate curve, and thus to the original one.

\begin{proof}[Proof of Theorem \ref{thm:Relaxation}]
We start by observing that, if $V : [0,T] \times \Pcal_p(\R^d) \tto C^0(\R^d,\R^d)$ is a set-valued map satisfying Hypotheses \ref{hyp:CICL} and $\mu(\cdot) \in \AC([0,T],\Pcal_p(\R^d))$ is a solution of either \eqref{eq:RelaxedContInc} or \eqref{eq:UnRelaxedContInc}, then by Proposition \ref{prop:Moment} every measurable selection $v(\cdot) \in \Lcal([0,T],C^0(\R^d,\R^d))$ of either $\co V(\cdot,\mu(\cdot))$ or $V(\cdot,\mu(\cdot))$ necessarily satisfies
\begin{equation}
\label{eq:RelaxationVelEst}
|v(t,x)| \leq (1+\Cpazo)m(t) \big( 1+|x| \big), 
\end{equation}
for $\Lcal^1$-almost every $t \in [0,T]$ and all $x \in \R^d$, where $\Cpazo > 0$ is a constant that only depends on the magnitudes of $p,\Mpazo_p(\mu^0)$ and $\Norm{m(\cdot)}_1$. Thus, denoting by $\Beta \in \Pcal(\R^d \times \Sigma_T)$ the superposition measure associated with such a velocity field via Definition \ref{def:SuperpositionMeas}, it directly follows from Gr\"onwall's lemma that
\begin{equation}
\label{eq:RelaxationCurveEst}
\NormC{\sigma(\cdot)}{0}{[0,T],\R^d} \leq \Cpazo_T(1+|x|), 
\end{equation}
for $\Beta$-almost every $(x,\sigma) \in \R^d \times \Sigma_T$, where $\Cpazo_T := \max\big\{ 1 , (1+\Cpazo) \hspace{-0.1cm} \Norm{m(\cdot)}_1 \hspace{-0.1cm} \big\} \exp \big( (1+\Cpazo) \hspace{-0.1cm} \Norm{m(\cdot)}_1 \hspace{-0.1cm} \big)$.   

\paragraph*{Step 1 -- Construction of an intermediate curve via Aumann's theorem.} Given an arbitrary real number $\delta > 0$, the fact that $\mu^0 \in \Pcal_p(\R^d)$ implies by standard results in measure theory that there exists a positive radius $R_{\delta} > 0$ for which
\begin{equation}
\label{eq:RelaxationMeasureEst}
\bigg( \INTDom{(1+|x|)^p}{\{ x \; \textnormal{s.t.} \; |x| \geq R_{\delta}/ \Cpazo_T-1 \}}{\mu^0(x)} \bigg)^{1/p} \leq \frac{\delta}{2 (1+\Cpazo) (1+\Cpazo_T) (1+\Norm{m(\cdot)}_1)}.
\end{equation}
Besides, remarking that $m(\cdot) \in L^1([0,T],\R_+)$, there exists a subdivision $0 = t_0 < t_1 < \dots < t_N = T$ of the interval $[0,T]$ such that 
\begin{equation}
\label{eq:RelaxationSubdivisionEst}
\INTSeg{m(s)}{s}{t_i}{t_{i+1}} \leq \frac{\delta}{2 (1 + \Cpazo)(1 + R_{\delta})}
\end{equation}
for each $i \in \{ 0, \dots, N-1\}$. From Hypotheses \ref{hyp:CICL} and Lemma \ref{lem:MeasurableSel}, one can check that the restricted set-valued maps $t \in [0,T] \tto V(t,\mu(t))_{|B(0,R_{\delta})}$ and $t \in [0,T] \tto \co V(t,\mu(t))_{|B(0,R_{\delta})}$ are $\Lcal^1$-measurable. Furthermore, they have compact images and are integrably bounded, and using the fact that the topology induced by $\dcc(\cdot,\cdot)$ on $C^0(B(0,R_{\delta}),\R^d)$ coincides with the usual norm topology, one has that
\begin{equation}
\label{eq:ConvHullCoincide}
\Big( \co V(t,\mu(t)) \Big)_{|B(0,R_{\delta})} = \co \Big( V(t,\mu(t))_{|B(0,R_{\delta})} \Big),
\end{equation}
wherein the second convex hull is taken in the Banach space $(C^0(B(0,R_{\delta}),\R^d),\NormC{\cdot}{0}{B(0,R_{\delta}),\R^d})$. Thus, denoting by $t \in [0,T] \mapsto v(t) \in \co V(t,\mu(t))$ the velocity selection associated with a given solution $\mu(\cdot) \in \AC([0,T],\Pcal_p(\R^d))$ of \eqref{eq:RelaxedContInc}, it follows from Theorem \ref{thm:Aumann} that there exist measurable selections $t \in [t_i,t_{i+1}] \mapsto v_i^{\delta}(t) \in V(t,\mu(t))_{|B(0,R_{\delta})}$ such that
\begin{equation}
\label{eq:RelaxationConvexityApprox}
\NormC{ \, \INTSeg{v_{|B(0,R_{\delta})}(s)}{s}{t_i}{t_{i+1}} - \INTSeg{v_i^{\delta}(s)}{s}{t_i}{t_{i+1}} \,}{0}{B(0,R_{\delta}),\R^d} \leq \frac{\delta}{N}
\end{equation}
for every $i \in \{0,\dots,N-1\}$, where the maps $v_i^{\delta}(\cdot),v_{|B(0,R_{\delta})}(\cdot)$ are elements of the separable Banach space $L^1([0,T],C^0(B(0,R_{\delta}),\R^d))$ and the integrals are understood in the sense of Bochner. Notice now that $(t,w) \in [t_i,t_{i+1}] \times C^0(\R^d,\R^d) \mapsto \|w-v_i^{\delta}(t) \|_{C^0(B(0,R_{\delta}),\R^d)}$ are Carathéodory, and also that
\begin{equation*}
t \in [t_i,t_{i+1}] \tto V(t,\mu(t)) \cap \bigg\{ w \in C^0(\R^d,\R^d) ~\, \textnormal{s.t.}~ \|w-v_i^{\delta}(t) \|_{C^0(B(0,R_{\delta}),\R^d)} = 0 \bigg\}
\end{equation*}
have nonempty images for each $i \in \{0,\dots,N-1\}$ by construction. Thence, by Lemma \ref{lem:MeasurableSel}-$(a)$, there exist measurable selections $t \in [t_i,t_{i+1}] \mapsto v_i(t) \in V(t,\mu(t))$ such that $v_i(t)_{|B(0,R_{\delta})} = v_i^{\delta}(t)$ for $\Lcal^1$-almost every $t \in [0,T]$. Therefore, the velocity field $w : [0,T] \times \R^d \to \R^d$ defined by 
\begin{equation*}
w(t,x) := \sum_{i=0}^{N-1} \mathds{1}_{[t_i,t_{i+1})}(t) v_i(t,x)
\end{equation*}
for $\Lcal^1$-almost every $t \in [0,T]$ and all $x \in \R^d$ is Carathéodory, and satisfies Hypotheses \ref{hyp:CE}. As such, by Theorem \ref{thm:Wellposedness}, it generates a unique solution $\nu(\cdot) \in \AC([0,T],\Pcal_p(\R^d))$ to the Cauchy problem 
\begin{equation*}
\left\{
\begin{aligned}
& \partial_t \nu(t) + \Div_x (w(t) \nu(t)) = 0, \\
& \nu(0) = \mu^0.
\end{aligned}
\right.
\end{equation*} 

\paragraph*{Step 2 -- Estimating of the Wasserstein distance between $\mu(\cdot)$ and $\nu(\cdot)$.} Let $\Beta_{\mu},\Beta_{\nu} \in \Pcal(\R^d \times \Sigma_T)$ be two superposition measures given by Theorem \ref{thm:Superposition} which satisfy
\begin{equation*}
(\esf_t)_{\sharp} \Beta_{\mu} = \mu(t) \qquad \text{and} \qquad (\esf_t)_{\sharp} \Beta_{\nu} = \nu(t)
\end{equation*}
for all times $t \in [0,T]$, and $\hat{\Beta}_{\mu,\nu} \in \Gamma(\Beta_{\mu},\Beta_{\nu})$ be a transport plan given by Lemma \ref{lem:SuperpositionPlan}, for which
\begin{equation*}
(\pi_{\R^d},\pi_{\R^d})_{\sharp} \hat{\Beta}_{\mu,\nu} = (\Id,\Id)_{\sharp} \mu^0 \qquad \text{and} \qquad \gamma(t) := (\esf_t,\esf_t)_{\sharp} \hat{\Beta}_{\mu,\nu} \in \Gamma_o(\mu(t),\nu(t)).
\end{equation*}
Then
\begin{equation}
\label{eq:RelaxationEst1}
\begin{aligned}
W_p(\mu(t),\nu(t)) & = \bigg( \INTDom{|x-y|^p}{\R^{2d}}{\gamma(t)(x,y)} \bigg)^{1/p} \\
& = \bigg( \INTDom{|\sigma_{\mu}(t) - \sigma_{\nu}(t)|^p}{(\R^d \times \Sigma_T)^2}{\hat{\Beta}_{\mu,\nu}(x,\sigma_{\mu},y,\sigma_{\nu})} \bigg)^{1/p} \\
& \leq \bigg( \INTDom{\bigg| \INTSeg{\Big( v(s,\sigma_{\mu}(s)) - v(s,\sigma_{\nu}(s)) \Big)}{s}{0}{t} \bigg|^p}{(\R^d \times \Sigma_T)^2}{\hat{\Beta}_{\mu,\nu}(x,\sigma_{\mu},y,\sigma_{\nu})} \bigg)^{1/p} \\
& \hspace{1.85cm} + \bigg( \INTDom{\bigg| \INTSeg{\Big( v(s,\sigma_{\nu}(s)) - w(s,\sigma_{\nu}(s)) \Big)}{s}{0}{t} \bigg|^p}{\R^d \times \Sigma_T}{\Beta_{\nu}(y,\sigma_{\nu})} \bigg)^{1/p}, 
\end{aligned}
\end{equation}
for all times $t \in [0,T]$, where we used the fact that $\Beta_{\mu},\Beta_{\nu}$ are superposition measures in the sense of Definition \ref{def:SuperpositionMeas}. By repeating the computations detailed in Appendices \ref{section:AppendixMoment} and \ref{section:AppendixGronwall}, it can be shown that 
\begin{equation}
\label{eq:RelaxationEst2}
\begin{aligned}
& \bigg( \INTDom{\bigg| \INTSeg{\Big( v(s,\sigma_{\mu}(s)) - v(s,\sigma_{\nu}(s)) \Big)}{s}{0}{t} \bigg|^p}{(\R^d \times \Sigma_T)^2}{\hat{\Beta}_{\mu,\nu}(x,\sigma_{\mu},y,\sigma_{\nu})} \bigg)^{1/p} \\
& \hspace{5cm} \leq \, \NormL{l(\cdot)}{1}{[0,t]}^{(p-1)/p} \bigg( \INTSeg{l(s)W_p^p(\mu(s),\nu(s))}{s}{0}{t} \bigg)^{1/p}, 
\end{aligned}
\end{equation}
for all times $t \in [0,T]$. Concerning the second term in the last inequality of the right-hand side of \eqref{eq:RelaxationEst1}, it can be bounded from above by the sum of two integrals as 
\begin{equation}
\label{eq:RelaxationEst31}
\begin{aligned}
& \bigg( \INTDom{\bigg| \INTSeg{\Big( v(s,\sigma_{\nu}(s)) - w(s,\sigma_{\nu}(s)) \Big)}{s}{0}{t} \; \bigg|^p}{\R^d \times \Sigma_T}{\Beta_{\nu}(y,\sigma_{\nu})} \bigg)^{1/p} \\
& \leq \bigg( \INTDom{\bigg| \INTSeg{\Big( v(s,\sigma_{\nu}(s)) - w(s,\sigma_{\nu}(s)) \Big)}{s}{0}{t} \; \bigg|^p}{\{(y,\sigma_{\nu}) \; \textnormal{s.t.} \; \NormC{\sigma_{\nu}(\cdot)}{0}{[0,T],\R^d} \leq R_{\delta} \}}{\Beta_{\nu}(y,\sigma_{\nu})} \bigg)^{1/p} \\
& \hspace{0.45cm} + \bigg( \INTDom{\bigg| \INTSeg{\Big( v(s,\sigma_{\nu}(s)) - w(s,\sigma_{\nu}(s)) \Big)}{s}{0}{t} \; \bigg|^p}{\{(y,\sigma_{\nu}) \; \textnormal{s.t.} \; \NormC{\sigma_{\nu}(\cdot)}{0}{[0,T],\R^d} > R_{\delta} \}}{\Beta_{\nu}(y,\sigma_{\nu})} \bigg)^{1/p}.
\end{aligned}
\end{equation}
Recalling that $(\pi_{\R^d})_{\sharp} \Beta_{\nu} = \mu^0$ and invoking the sublinearity estimates of \eqref{eq:RelaxationVelEst} and \eqref{eq:RelaxationCurveEst}, the second term in the right-hand side of \eqref{eq:RelaxationEst31} can be estimated as 
\begin{equation}
\label{eq:RelaxationEst32}
\begin{aligned}
& \bigg( \INTDom{\bigg| \INTSeg{\Big( v(s,\sigma_{\nu}(s)) - w(s,\sigma_{\nu}(s)) \Big)}{s}{0}{t} \; \bigg|^p}{\{(y,\sigma_{\nu}) \; \textnormal{s.t.} \; \NormC{\sigma_{\nu}(\cdot)}{0}{[0,T],\R^d} > R_{\delta} \}}{\Beta_{\nu}(y,\sigma_{\nu})} \bigg)^{1/p} \\
& \leq 2 (1+\Cpazo) \Norm{m(\cdot)}_1 \bigg( \INTDom{ \Big( 1 + \NormC{\sigma_{\nu}(\cdot)}{0}{[0,T],\R^d} \Big)^p}{\{(y,\sigma_{\nu}) \; \textnormal{s.t.} \; \NormC{\sigma_{\nu}(\cdot)}{0}{[0,T],\R^d} > R_{\delta} \}}{\Beta_{\nu}(y,\sigma_{\nu})} \bigg)^{1/p} \\
& \leq 2 (1+\Cpazo) (1+\Cpazo_T) \Norm{m(\cdot)}_1 \bigg( \INTDom{(1+|x|)^p}{\{ x \; \textnormal{s.t.} \; |x| > R_{\delta}/\Cpazo_T-1 \}}{\mu^0(x)} \bigg)^{1/p}, 
\end{aligned}
\end{equation}
which together with our choice of $R_{\delta} > 0$ in \eqref{eq:RelaxationMeasureEst} further yields
\begin{equation}
\label{eq:RelaxationEst33}
\bigg( \INTDom{\bigg| \INTSeg{\Big( v(s,\sigma_{\nu}(s)) - w(s,\sigma_{\nu}(s)) \Big)}{s}{0}{t} \; \bigg|^p}{\{(y,\sigma_{\nu}) \; \textnormal{s.t.} \; \NormC{\sigma_{\nu}(\cdot)}{0}{[0,T],\R^d} > R_{\delta} \}}{\Beta_{\nu}(y,\sigma_{\nu})} \bigg)^{1/p} \leq \delta. 
\end{equation}
We now focus our attention on the first term in the right-hand side of \eqref{eq:RelaxationEst31}. By construction, there exists an integer $j \in \{0,\dots,N-1\}$ such that $t \in [t_j,t_{j+1}]$, which means that
\begin{equation}
\label{eq:RelaxationEst34}
\begin{aligned}
& \bigg( \INTDom{\bigg| \INTSeg{\Big( v(s,\sigma_{\nu}(s)) - w(s,\sigma_{\nu}(s)) \Big)}{s}{0}{t} \; \bigg|^p}{\{(y,\sigma_{\nu}) \; \textnormal{s.t.} \; \NormC{\sigma_{\nu}(\cdot)}{0}{[0,T],\R^d} \leq R_{\delta} \}}{\Beta_{\nu}(y,\sigma_{\nu})} \bigg)^{1/p} \\
& \leq \sum_{i=0}^{j-1} \bigg( \INTDom{\bigg| \INTSeg{\Big( v(s,\sigma_{\nu}(s)) - v_i(s,\sigma_{\nu}(s)) \Big)}{s}{t_i}{t_{i+1}} \; \bigg|^p}{\{(y,\sigma_{\nu}) \; \textnormal{s.t.} \; \NormC{\sigma_{\nu}(\cdot)}{0}{[0,T],\R^d} \leq R_{\delta} \}}{\Beta_{\nu}(y,\sigma_{\nu})} \bigg)^{1/p} \\
& \hspace{0.45cm} + \bigg( \INTDom{\bigg| \INTSeg{\Big( v(s,\sigma_{\nu}(s)) - v_j(s,\sigma_{\nu}(s)) \Big)}{s}{t_j}{t} \; \bigg|^p}{\{(y,\sigma_{\nu}) \; \textnormal{s.t.} \; \NormC{\sigma_{\nu}(\cdot)}{0}{[0,T],\R^d} \leq R_{\delta} \}}{\Beta_{\nu}(y,\sigma_{\nu})} \bigg)^{1/p} \\
& \leq \sum_{i=0}^{j-1} \bigg( \INTDom{\bigg| \INTSeg{\Big( v(s,\sigma_{\nu}(s)) - v_i(s,\sigma_{\nu}(s)) \Big)}{s}{t_i}{t_{i+1}} \; \bigg|^p}{\{(y,\sigma_{\nu}) \; \textnormal{s.t.} \; \NormC{\sigma_{\nu}(\cdot)}{0}{[0,T],\R^d} \leq R_{\delta} \}}{\Beta_{\nu}(y,\sigma_{\nu})} \bigg)^{1/p} + \delta, 
\end{aligned}
\end{equation}
where we used the estimates of \eqref{eq:RelaxationVelEst} and \eqref{eq:RelaxationSubdivisionEst} in the second inequality. Now, given an arbitrary integer $i \in \{0,\dots,j-1\}$, the facts that $v(\cdot),v_i(\cdot) \in \Lcal([0,T],C^0(\R^d,\R^d))$ both satisfy the sublinearity estimate \eqref{eq:RelaxationVelEst} along with the regularity assumption of Hypothesis \ref{hyp:CICL}-$(iii)$ allow us to write
\begin{equation}
\label{eq:RelaxationEst35}
\begin{aligned}
& \bigg( \INTDom{\bigg| \INTSeg{\Big( v(s,\sigma_{\nu}(s)) - v_i(s,\sigma_{\nu}(s)) \Big)}{s}{t_i}{t_{i+1}} \; \bigg|^p}{\{(y,\sigma_{\nu}) \; \textnormal{s.t.} \; \NormC{\sigma_{\nu}(\cdot)}{0}{[0,T],\R^d} \leq R_{\delta} \}}{\Beta_{\nu}(y,\sigma_{\nu})} \bigg)^{1/p} \\
& \leq \bigg( \INTDom{\bigg| \INTSeg{\Big( v(s,\sigma_{\nu}(t_i)) - v_i(s,\sigma_{\nu}(t_i)) \Big)}{s}{t_i}{t_{i+1}} \; \bigg|^p}{\{(y,\sigma_{\nu}) \; \textnormal{s.t.} \; \NormC{\sigma_{\nu}(\cdot)}{0}{[0,T],\R^d} \leq R_{\delta} \}}{\Beta_{\nu}(y,\sigma_{\nu})} \bigg)^{1/p} \\
& \hspace{0.45cm} + \bigg( \INTDom{\bigg| \INTSeg{\Big( v(s,\sigma_{\nu}(s)) - v(s,\sigma_{\nu}(t_i)) \Big)}{s}{t_i}{t_{i+1}} \; \bigg|^p}{\{(y,\sigma_{\nu}) \; \textnormal{s.t.} \; \NormC{\sigma_{\nu}(\cdot)}{0}{[0,T],\R^d} \leq R_{\delta} \}}{\Beta_{\nu}(y,\sigma_{\nu})} \bigg)^{1/p} \\
& \hspace{0.45cm} + \bigg( \INTDom{\bigg| \INTSeg{\Big( v_i(s,\sigma_{\nu}(s)) - v_i(s,\sigma_{\nu}(t_i)) \Big)}{s}{t_i}{t_{i+1}} \; \bigg|^p}{\{(y,\sigma_{\nu}) \; \textnormal{s.t.} \; \NormC{\sigma_{\nu}(\cdot)}{0}{[0,T],\R^d} \leq R_{\delta} \}}{\Beta_{\nu}(y,\sigma_{\nu})} \bigg)^{1/p} \\
& \leq \NormC{\, \INTSeg{v(s)_{|B(0,R_{\delta})}}{s}{t_i}{t_{i+1}} - \INTSeg{v_i^{\delta}(s)}{s}{t_i}{t_{i+1}} \, }{0}{B(0,R_{\delta}),\R^d} \\
& \hspace{0.45cm} + 2 \Bigg( \INTDom{\bigg| \INTSeg{l(s)\INTSeg{m(\zeta) \Big( 1 + |\sigma_{\nu}(\zeta)| + \Cpazo \Big)}{\zeta}{t_i}{s}}{s}{t_i}{t_{i+1}} \; \bigg|^p}{\{(y,\sigma_{\nu}) \; \textnormal{s.t.} \; \NormC{\sigma_{\nu}(\cdot)}{0}{[0,T],\R^d} \leq R_{\delta} \}}{\Beta_{\nu}(y,\sigma_{\nu})} \Bigg)^{1/p} \\
& \leq \NormC{\, \INTSeg{v(s)_{|B(0,R_{\delta})}}{s}{t_i}{t_{i+1}} - \INTSeg{v_i^{\delta}(s)}{s}{t_i}{t_{i+1}} \, }{0}{B(0,R_{\delta},\R^d)} \hspace{-0.1cm} + 2 \big(1+ R_{\delta} + \Cpazo \big) \INTSeg{l(s) \INTSeg{m(\zeta)}{\zeta}{t_i}{s}}{s}{t_i}{t_{i+1}} \\
& \leq \frac{\delta}{N} + \delta \INTSeg{l(s)}{s}{t_i}{t_{i+1}}, 
\end{aligned}
\end{equation}
where we leveraged the preliminary estimate \eqref{eq:RelaxationVelEst} in the second and third inequalities, as well as \eqref{eq:RelaxationSubdivisionEst} and \eqref{eq:RelaxationConvexityApprox} in the last one. Hence, by inserting the estimates of \eqref{eq:RelaxationEst35} for $i \in \{0,\dots,j-1\}$ into \eqref{eq:RelaxationEst34}, combining the resulting expression with \eqref{eq:RelaxationEst33} and plugging the latter in \eqref{eq:RelaxationEst31}, we then get 
\begin{equation}
\label{eq:RelaxationEst36}
\bigg( \INTDom{\bigg| \INTSeg{\Big( v(s,\sigma_{\nu}(s)) - w(s,\sigma_{\nu}(s)) \Big)}{s}{0}{t} \bigg|^p}{\R^d \times \Sigma_T}{\Beta_{\nu}(y,\sigma_{\nu})} \bigg)^{1/p} \leq \delta \big( \, 3 \, + \Norm{l(\cdot)}_1 \big).
\end{equation}
By merging \eqref{eq:RelaxationEst2} and \eqref{eq:RelaxationEst36} into \eqref{eq:RelaxationEst1}, raising the resulting inequality to the power $p$, applying Gr\"onwall's lemma and then raising the corresponding expression to the power $1/p$, we finally obtain 
\begin{equation}
\label{eq:RelaxationEst4}
W_p(\mu(t),\nu(t)) \leq \delta \, C_p \big( 3 \, + \Norm{l(\cdot)}_1 \hspace{-0.1cm} \big) \exp \Big( C_p' \Norm{l(\cdot)}_1^p \Big), 
\end{equation}
which holds for all times $t \in [0,T]$. 


\paragraph*{Step 3 -- Back to the unrelaxed problem using Filippov's estimates.} 

At this stage, one should note that $\nu(\cdot) \in \AC([0,T],\Pcal_p(\R^d))$ is not a solution of \eqref{eq:UnRelaxedContInc}, since $w(t) \in V(t,\mu(t))$ for $\Lcal^1$-almost every $t \in [0,T]$. Nevertheless, one has that the mismatch function defined by 
\begin{equation*}
\eta_{\delta}(t) := \dist_{C^0(B(0,R_{\delta}),\R^d)} \Big( w(t) \, ; V(t,\nu(t)) \Big)
\end{equation*}
for $\Lcal^1$-almost every $t \in [0,T]$ is Lebesgue integrable and satisfies
\begin{equation*}
\begin{aligned}
\eta_{\delta}(t) & \leq L(t) W_p(\mu(t),\nu(t)) \\
&\leq \delta C_p L(t) \big( 3 \; + \Norm{l(\cdot)}_1 \hspace{-0.1cm} \big) \exp \Big( C_p' \Norm{l(\cdot)}_1^p \hspace{-0.05cm} \Big),
\end{aligned}
\end{equation*}
as a consequence of Hypothesis \ref{hyp:CICL}-$(iv)$. Thus, by leveraging the results of Theorem \ref{thm:Filippov} while recollecting the a priori bound of \eqref{eq:RelaxationMeasureEst}, there exists a solution $\mu_{\delta}(\cdot) \in \AC([0,T],\Pcal_p(\R^d))$ of the unrelaxed Cauchy problem \eqref{eq:UnRelaxedContInc} which satisfies 
\begin{equation}
\label{eq:RelaxationEst5}
\begin{aligned}
W_p(\mu_{\delta}(t),\nu(t)) & \leq C_p \Bigg( \delta C_p \hspace{-0.05cm} \Norm{L(\cdot)}_1 \hspace{-0.05cm} \big( 3 \; + \Norm{l(\cdot)}_1 \hspace{-0.1cm} \big) \exp \Big( C_p' \Norm{l(\cdot)}_1^p \Big) + \delta \Bigg)\exp \Big( C_p' \NormL{l(\cdot)}{1}{[0,t]}^p \hspace{-0.1cm} + \chi_p(t) \Big) \\
& \leq \delta C_p \bigg( 1 + \big(3 \; + \Norm{l(\cdot)}_1 \hspace{-0.1cm} \big) \Norm{\chi_p(\cdot)}_{\infty} \hspace{-0.1cm} \bigg) \exp \Big( C_p' \Norm{l(\cdot)}_1^p +  \Norm{\chi_p(\cdot)}_{\infty} \hspace{
-0.1cm} \Big), 
\end{aligned}
\end{equation}
for all times $t \in [0,T]$, where the map $\chi_p(\cdot)$ is defined in \eqref{eq:ChiDef}. Thus, by combining \eqref{eq:RelaxationEst4} and \eqref{eq:RelaxationEst5}, applying the triangle inequality and redefining the constant $\delta > 0$ as 
\begin{equation*}
\delta := \frac{\delta}{C_p \bigg( \big( 3 \; + \Norm{l(\cdot)}_1 \hspace{-0.1cm} \big) \Big( 1 + \Norm{\chi_p(\cdot)}_{\infty} \exp \big( \hspace{-0.1cm} \Norm{\chi_p(\cdot)}_{\infty} \hspace{-0.1cm} \big) \Big) + \exp \big( \hspace{-0.1cm} \Norm{\chi_p(\cdot)}_{\infty} \hspace{-0.1cm} \big) \bigg) \exp \Big( C_p' \Norm{l(\cdot)}_1^p \Big)}
\end{equation*}
we can finally conclude that the solution $\mu_{\delta}(\cdot)$ of the unrelaxed Cauchy problem \eqref{eq:UnRelaxedContInc} is such that
\begin{equation*}
\sup_{t \in [0,T]} W_p(\mu(t),\mu_{\delta}(t)) \leq \delta, 
\end{equation*}
which ends the proof of Theorem \ref{thm:Relaxation}. 
\end{proof}


\addcontentsline{toc}{section}{Appendices}
\section*{Appendices}

In this auxiliary section, we detail the proofs of several technical results appearing in the manuscript. 


\setcounter{subsection}{0} 
\renewcommand{\thesubsection}{A} 

\subsection{Proof of Lemma \ref{lem:MeasurableSel}}
\label{section:AppendixMeas}

\setcounter{Def}{0} \renewcommand{\thethm}{A.\arabic{Def}} 
\setcounter{equation}{0} \renewcommand{\theequation}{A.\arabic{equation}}

In this first appendix, we detail parts of the proof of the measurable selection principles of Lemma \ref{lem:MeasurableSel}, as these latter rely on somewhat non-standard assumptions. 

\begin{proof}[Proof of Lemma \ref{lem:MeasurableSel}]
In what follows, we start with the proof of item $(a)$, and proceed with that of item $(b)$. The proof of item $(c)$ is completely standard and can be found e.g. in \cite[Theorem 8.2.8]{Aubin1990}. The only delicate thing that needs proving in the statement of Lemma \ref{lem:MeasurableSel}-$(a)$ is the fact that the set-valued map under consideration is measurable. To this end, we consider the multifunctions
\begin{equation*}
t \in [0,T] \tto \Apazo_n(t) := \Big\{ y \in Y ~\,\text{s.t.}~ \varphi_n(t,y) \leq L(t) \Big\}
\end{equation*}
defined for each $n \geq 1$, which are $\Lcal^1$-measurable by \cite[Theorem 8.2.9]{Aubin1990}. For each closed set $\Cpazo \subset Y$, this implies in particular by \cite[Theorem 8.1.4]{Aubin1990} that
\begin{equation*}
\Dcal_{\Cpazo} := \bigcap_{n \geq 1} \Big\{ t \in [0,T] ~\, \text{s.t.}~ \Fpazo(t) \cap \Apazo_n(t) \cap \Cpazo \neq \emptyset \Big\}
\end{equation*}
is an $\Lcal^1$-measurable set. To conclude the proof of our claim, there remains to show that 
\begin{equation*}
\Dcal_{\Cpazo} = \Big\{ t \in [0,T] ~\, \text{s.t.}~ \Fpazo(t) \cap \Apazo(t) \cap \Cpazo \neq \emptyset \Big\}, 
\end{equation*}
where $\Apazo(t) := \big\{ y \in Y ~\,\text{s.t.}~ \varphi(t,y) \leq L(t)\big\}$ for $\Lcal^1$-almost every $t \in [0,T]$. By construction, one has
\begin{equation*}
\Big\{ t \in [0,T] ~\, \text{s.t.}~ \Fpazo(t) \cap \Apazo(t) \cap \Cpazo \neq \emptyset \Big\} \subset \Dcal_{\Cpazo}, 
\end{equation*}
as a consequence of \eqref{eq:varphisup}. Conversely for any $\tau \in \Dcal_{\Cpazo}$, remark that the sets defined for each $n \geq 1$ by
\begin{equation*}
\Bpazo_n(\tau) := \Fpazo(\tau) \cap \Apazo_n(\tau) \cap \Cpazo
\end{equation*}
form a non-increasing sequence since maps $(\varphi_n(\cdot,\cdot))$ are pointwisely non-decreasing. Moreover under our standing assumptions, the sets $\Bpazo_n(\tau) \subset Y$ are compact and nonempty for each $n \geq 1$. Whence, it follows from Cantor's intersection theorem (see e.g. \cite[Theorem 2.6]{Rudin1987}) that 
\begin{equation*}
\Bpazo(\tau) := \bigcap_{n \geq 1} \Bpazo_n(\tau) \neq \emptyset, 
\end{equation*}
which together with the fact that $\Bpazo(\tau) \subset \Fpazo(\tau) \cap \Apazo(\tau) \cap \Cpazo$ finally yields that 
\begin{equation*}
\tau \in \Big\{ t \in [0,T] ~\, \text{s.t.}~ \Fpazo(t) \cap \Apazo(t) \cap \Cpazo \neq \emptyset \Big\} 
\end{equation*}
and concludes the proof of our claim. 

We now shift our focus to the statements of Lemma \ref{lem:MeasurableSel}-$(b)$, and start by observing that the set-valued map appearing therein can be rewritten as 
\begin{equation*}
t \in [0,T] \tto \Fpazo(t )\cap \bigg\{ y \in Y ~\, \textnormal{s.t.}~ \varphi(t,y) \leq \inf_{z \in \Fpazo(t)} \varphi(t,z) \bigg\}.
\end{equation*}
Thus by what precedes, it is sufficient for our purpose to show that the map 
\begin{equation*}
t \in [0,T] \mapsto \inf_{z \in \Fpazo(t)} \varphi(t,z) \in \R_+
\end{equation*}
is $\Lcal^1$-measurable. It follows from classical measurability results (see e.g. \cite[Theorem 8.2.11]{Aubin1990}) that for each $n \geq 1$, the map 
\begin{equation*}
\varpi_n : t \in [0,T] \mapsto \inf_{z \in \Fpazo(t)} \varphi_n(t,z)
\end{equation*}
is $\Lcal^1$-measurable. Since the sequence $(\varpi_n(\cdot))$ is pointwisely non-decreasing and bounded, the limits
\begin{equation*}
\varpi(t) := \lim_{n \to +\infty} \varpi_n(t) = \inf_{z \in \Fpazo(t)} \varphi(t,z)
\end{equation*}
exist for $\Lcal^1$-almost every $t \in [0,T]$, and the map $\varpi(\cdot)$ is $\Lcal^1$-measurable by \cite[Theorem 8.2.5]{Aubin1990}. 
\end{proof}


\setcounter{subsection}{0} 
\renewcommand{\thesubsection}{B} 

\subsection{Proof of Lemma \ref{lem:CompleteMetric}}
\label{section:AppendixCompleteness}

\setcounter{Def}{0} \renewcommand{\thethm}{B.\arabic{Def}} 
\setcounter{equation}{0} \renewcommand{\theequation}{B.\arabic{equation}}

In this second appendix, we detail for the sake of completeness the proof of Lemma \ref{lem:CompleteMetric}, which mimics standard arguments used to show that the $L^p$-spaces are complete, see e.g. to \cite[Theorem 3.11]{Rudin1987} 

\begin{proof}
Let $(v_n(\cdot)) \subset \Lcal(I,C^0(\R^d,\R^d))$ be a Cauchy sequence in the sense of \eqref{eq:ExtendedCauchy}, and observe that without loss of generality, the latter may be chosen so that 
\begin{equation}
\label{eq:Completeness1}
\INTSeg{\dsf_{\sup}(v_n(t),v_{n+1}(t))}{t}{0}{T} \leq \frac{1}{2^n}
\end{equation}
for all $n \geq 1$, up to extracting a subsequence. Consider now the sequence of real-valued mappings 
\begin{equation}
\label{eq:Completeness2}
d_n(t) := \sum_{k=1}^n \dsf_{\sup}(v_k(t),v_{k+1}(t)), 
\end{equation}
defined for $\Lcal^1$-almost every $t \in I$ and each $n \geq 1$, and observe that by \eqref{eq:Completeness1}, one has that
\begin{equation}
\sup_{n \geq 1} \INTSeg{d_n(t)}{t}{0}{T} \, \leq \, \sum_{k=1}^{+\infty} \INTSeg{\dsf_{\sup}(v_k(t),v_{k+1}(t))}{t}{0}{T} \, \leq \, 1.
\end{equation}
By \cite[Theorem 1.38]{Rudin1987}, the partial sums of series defined in \eqref{eq:Completeness2} converge for $\Lcal^1$-almost every $t \in I$ towards a map $d(\cdot) \in L^1(I,\R_+)$, which must then satisfy 
\begin{equation*}
\INTSeg{d(t)}{t}{0}{T} \leq 1. 
\end{equation*}
Thus, it necessarily follows that $d(t) < +\infty$ for $\Lcal^1$-almost every $t \in I$, which by \eqref{eq:Completeness2} together with the definition \eqref{eq:ExtendedC0} of the extended metric $\dsf_{\sup}(\cdot,\cdot)$ implies that the series of functions defined by
\begin{equation*}
w(t,x) := \sum_{k=1}^{+\infty} \big( v_{k+1}(t,x) - v_k(t,x) \big) \qquad \text{for all $x \in \R^d$} 
\end{equation*}
is normally convergent for $\Lcal^1$-almost every $t \in I$. Consequently, for every $(t,x) \in I \times \R^d$, we can define the mapping $v : I \times \R^d \to \R^d$ by 
\begin{equation*}
v(t,x) := \left\{
\begin{aligned}
& v_1(t,x) + w(t,x) ~~ & \text{if $(d_n(t))$ converges}, \\
& 0 ~~ & \text{otherwise}, 
\end{aligned}
\right. 
\end{equation*}
and observe that it is $\Lcal^1$-measurable with respect to $t \in I$ and continuous with respect to $x \in \R^d$. Moreover, the map $v(\cdot) \in \Lcal(I,C^0(\R^d,\R^d))$ is the pointwise limit of the sequence $(v_n(\cdot))$, since 
\begin{equation*}
\begin{aligned}
\dsf_{\sup}(v_n(t),v(t)) & = \dsf_{\sup} \Bigg( \sum_{k=1}^{n-1} \big( v_{k+1}(t) - v_k(t) \big) \, , \, w(t) \Bigg) \\
& = \dsf_{\sup} \Bigg( \sum_{k=1}^{n-1} \big( v_{k+1}(t) - v_k(t) \big) \, , \, \sum_{k=1}^{+\infty} \big( v_{k+1}(t) - v_k(t) \big) \Bigg) ~\underset{n \to +\infty}{\longrightarrow}~ 0, 
\end{aligned}
\end{equation*}
where we used the fact that a series that is normally convergent over $\R^d$ is also uniformly convergent. By a simple application of Fatou's lemma (see e.g. \cite[Statement 1.28]{Rudin1987}), one can finally show that 
\begin{equation*}
\INTSeg{\dsf_{\sup}(v_n(t),v(t))}{t}{0}{T} ~\underset{n \to +\infty}{\longrightarrow}~ 0, 
\end{equation*}
which concludes the proof of our claim. 
\end{proof}

\setcounter{subsection}{0} 
\renewcommand{\thesubsection}{C} 

\subsection{Proof of Proposition \ref{prop:Moment}}
\label{section:AppendixMoment}

\setcounter{Def}{0} \renewcommand{\thethm}{C.\arabic{Def}} 
\setcounter{equation}{0} \renewcommand{\theequation}{C.\arabic{equation}}

In this third appendix, we detail the proof of the moment and equi-integrability bounds displayed in Proposition \ref{prop:Moment}, which both rely on the superposition principle recalled in Theorem \ref{thm:Superposition}. 

\begin{proof}[Proof of Proposition \ref{prop:Moment}]

We do not detail the proof of \eqref{eq:MomentEst} as it is almost identical to that of \cite[Proposition 2]{ContInc}. Concerning the equi-integrability estimate \eqref{eq:UnifIntegEst}, let $\Beta_{\mu} \in \R^d \times \Sigma_T$ be a superposition measure given by Theorem \ref{thm:Superposition} such that $\mu(t) = (\esf_t)_{\sharp} \Beta_{\mu}$ for all times $t \in [\tau,T]$, and observe then that 
\begin{equation*}
|v(t,\sigma_{\mu}(t))| \leq m(t)(1+\sigma_{\mu}(t))
\end{equation*}
holds for $\Lcal^1$-almost every $t \in [\tau,T]$ and $\Beta_{\mu}$-almost every $(x,\sigma_{\mu}) \in \R^d \times \Sigma_T$ under our working assumptions. Thence, it follows from a simple application of Gr\"onwall's lemma that
\begin{equation}
\label{eq:SublinApp}
|\sigma_{\mu}(t)| \leq \bigg( |x| + \INTSeg{m(s)}{s}{\tau}{t} \bigg) \exp \bigg( \INTSeg{m(s)}{s}{\tau}{t} \bigg) \leq C_T(1+|x|), 
\end{equation}
for all times $t \in [\tau,T]$ and $\Beta_{\mu}$-almost every $(x,\sigma) \in \R^d \times \Sigma_T$, where we introduced the constant $C_T = \max\{ 1,\Norm{m(\cdot)}_1 \} \exp(\Norm{m(\cdot)}_1)$. In particular, for every $R > 0$, the latter inequality implies 
\begin{equation*}
\begin{aligned}
\bigg( \INTDom{|x|^p}{\{ x \; \textnormal{s.t.} \; |x| \geq R \}}{\mu(t)(x)} \bigg)^{1/p} & = \bigg( \INTDom{|\esf_t(x,\sigma_{\mu})|^p}{\{ (x,\sigma_{\mu}) \; \textnormal{s.t.} \; |\esf_t(x,\sigma_{\mu})| \geq R \}}{\Beta_{\mu}(x,\sigma_{\mu})} \bigg)^{1/p} \\
& \leq C_T \bigg( \INTDom{(1 +|x|)^p}{\{ (x,\sigma_{\mu}) \; \textnormal{s.t.} \; C_T(1+|x|) \geq R \}}{\Beta_{\mu}(x,\sigma_{\mu})} \bigg)^{1/p} \\
& = C_T \bigg( \INTDom{(1+|x|)^p}{\{ x \; \textnormal{s.t.} \; |x| \geq R/C_T-1\}}{\mu^0(x)} \bigg)^{1/p}
\end{aligned}
\end{equation*}
for all times $t \in [\tau,T]$, which ends the proof of our claim. 
\end{proof}

\setcounter{subsection}{0} 
\renewcommand{\thesubsection}{D} 

\subsection{Proof of Proposition \ref{prop:Gronwall}}
\label{section:AppendixGronwall}

\setcounter{Def}{0} \renewcommand{\thethm}{D.\arabic{Def}} 
\setcounter{equation}{0} \renewcommand{\theequation}{D.\arabic{equation}}

In this fourth and last appendix, we detail the proof of the stability estimates of Proposition \ref{prop:Gronwall}. 

\begin{proof}[Proof of Proposition \ref{prop:Gronwall}]

The demonstration of the global stability estimate \eqref{eq:Gronwall1} is omitted, as it is almost identical to that of \cite[Proposition 2]{ContInc}. Regarding the local stability estimate, fix some $R > 0$ and observe that under our working assumptions, the map
\begin{equation*}
t \in [\tau,T] \mapsto \NormL{v(t) - w(t)}{\infty}{B(0,R),\R^d; \, \nu(t)},
\end{equation*}
is Lebesgue integrable. In that case, one can prove by repeating the series of computations much like those of \cite[Appendix B]{ContInc} that
\begin{equation}
\label{eq:StabilityIneq5}
\begin{aligned}
W_p(\mu(t),\nu(t)) \leq C_p \Bigg( W_p(\mu_{\tau},\nu_{\tau}) + \bigg( \INTDom{\bigg(\INTSeg{|v(s,\sigma_{\nu}(s)) - w(s,\sigma_{\nu}(s))|}{s}{\tau}{t} \bigg)^p}{\R^d \times \Sigma_T}{\Beta_{\nu}(y,\sigma_{\nu})}\bigg)^{1/p} \Bigg) & \\
\times \exp \Big( C_p' \NormL{l(\cdot)}{1}{[\tau,t]}^p \hspace{-0.05cm} \Big) \, & , 
\end{aligned}
\end{equation}
for all times $t \in [\tau,T]$, wherein $\Beta_{\nu} \in \Pcal(\R^d \times \Sigma_T)$ is a superposition measure generating $\nu(\cdot) \in \AC([\tau,T],\Pcal_p(\R^d))$, whose existence follows from Theorem \ref{thm:Superposition}. We focus our attention on the integral term appearing in the right-hand side of \eqref{eq:StabilityIneq5}, and observe that the latter can be split into two parts 
\begin{equation}
\label{eq:StabilityIneq6}
\begin{aligned}
& \INTDom{\bigg(\INTSeg{|v(s,\sigma_{\nu}(s)) - w(s,\sigma_{\nu}(s))|}{s}{\tau}{t} \bigg)^p}{\R^d \times \Sigma_T}{\Beta_{\nu}(y,\sigma_{\nu})} \\
& = \INTDom{\bigg(\INTSeg{|v(s,\sigma_{\nu}(s)) - w(s,\sigma_{\nu}(s))|}{s}{\tau}{t} \bigg)^p}{\{(y,\sigma_{\nu}) \; \textnormal{s.t.} \; \NormC{\sigma_{\nu}(\cdot)}{0}{[\tau,T],\R^d} \leq R \}}{\Beta_{\nu}(y,\sigma_{\nu})} \\
& \hspace{0.45cm} + \INTDom{\bigg(\INTSeg{|v(s,\sigma_{\nu}(s)) - w(s,\sigma_{\nu}(s))|}{s}{\tau}{t} \bigg)^p}{\{(y,\sigma_{\nu}) \; \textnormal{s.t.} \; \NormC{\sigma_{\nu}(\cdot)}{0}{[\tau,T],\R^d} > R \}}{\Beta_{\nu}(y,\sigma_{\nu})}, 
\end{aligned}
\end{equation}
for all $t \in [0,T]$. The first term in the right-hand side of \eqref{eq:StabilityIneq6} can be estimated straightforwardly as 
\begin{equation}
\label{eq:StabilityIneq61}
\begin{aligned}
& \INTDom{\bigg(\INTSeg{|v(s,\sigma_{\nu}(s)) - w(s,\sigma_{\nu}(s))|}{s}{\tau}{t} \bigg)^p}{\{(y,\sigma_{\nu}) \; \textnormal{s.t.} \; \NormC{\sigma_{\nu}(\cdot)}{0}{[\tau,T],\R^d} \leq R \}}{\Beta_{\nu}(y,\sigma_{\nu})} \\
& \hspace{7cm} \leq \bigg( \INTSeg{\NormL{v(s) - w(s)}{\infty}{B(0,R),\R^d; \, \nu(s)}}{s}{\tau}{t} \bigg)^p, 
\end{aligned}
\end{equation}
for all times $t \in [\tau,T]$. Concerning the second term in the right-hand side of \eqref{eq:StabilityIneq6}, it follows from the sublinearity assumptions made on $v,w : [0,T] \times \R^d \to \R^d$ and H\"older's inequality that 
\begin{equation}
\label{eq:StabilityIneq62}
\begin{aligned}
& \INTDom{\bigg(\INTSeg{\big| v(s,\sigma_{\nu}(s)) - w(s,\sigma_{\nu}(s)) \big|}{s}{\tau}{t} \bigg)^p}{\{(y,\sigma_{\nu}) \; \textnormal{s.t.} \; \NormC{\sigma_{\nu}(\cdot)}{0}{[\tau,T],\R^d} > R \}}{\Beta_{\nu}(y,\sigma_{\nu})} \\
& \hspace{0.45cm} \leq 2^p \INTDom{\bigg(\INTSeg{m(s) \big(1 + |\sigma_{\nu}(s)| \big)}{s}{\tau}{t} \bigg)^p}{\{(y,\sigma_{\nu}) \; \textnormal{s.t.} \; \NormC{\sigma_{\nu}(\cdot)}{0}{[\tau,T],\R^d} > R \}}{\Beta_{\nu}(y,\sigma_{\nu})} \\
& \hspace{0.45cm} \leq 2^p \NormL{m(\cdot)}{1}{[\tau,t]}^{p/q} \INTSeg{m(s) \bigg( \INTDom{\big( 1 + |\sigma_{\nu}(s)| \big)^p}{\{(y,\sigma_{\nu}) \; \textnormal{s.t.} \; \NormC{\sigma_{\nu}(\cdot)}{0}{[\tau,T],\R^d} > R \}}{\Beta_{\nu}(y,\sigma_{\nu})} \bigg)}{s}{\tau}{t}. 
\end{aligned}
\end{equation}
Recall now that by \eqref{eq:SublinApp} above, it also holds that 
\begin{equation*}
\NormC{\sigma_{\nu}(\cdot)}{0}{[\tau,T],\R^d} \leq C_T(1+|y|), 
\end{equation*}
for $\Beta_{\nu}$-almost every $(y,\sigma_{\nu})$, which together with \eqref{eq:StabilityIneq62} then yields
\begin{equation}
\label{eq:StabilityIneq63}
\begin{aligned}
& \INTDom{\bigg(\INTSeg{\big| v(s,\sigma_{\nu}(s)) - w(s,\sigma_{\nu}(s)) \big|}{s}{\tau}{t} \bigg)^p}{\{(y,\sigma_{\nu}) \; \textnormal{s.t.} \; \NormC{\sigma_{\nu}(\cdot)}{0}{[\tau,T],\R^d} > R \}}{\Beta_{\nu}(y,\sigma_{\nu})} \\
& \hspace{3.5cm} \leq 2^p \NormL{m(\cdot)}{1}{[\tau,t]}^p \INTDom{\Big( 1 + C_T(1+|y|) \Big)^p}{\{ y \; \textnormal{s.t.} \; |y| \geq R/C_T - 1 \}}{\nu_{\tau}&(y)}, 
\end{aligned}
\end{equation}
for all times $t \in [\tau,T]$. Therefore, by merging \eqref{eq:StabilityIneq61} and \eqref{eq:StabilityIneq63} in \eqref{eq:StabilityIneq5} while applying H\"older's inequality, we finally obtain that 
\begin{equation*}
\begin{aligned}
W_p(\mu(t),\nu(t)) \leq C_p \Bigg( W_p(\mu_{\tau},\nu_{\tau}) & + \INTSeg{\NormL{v(s) - w(s)}{\infty}{B(0,R),\R^d;\, \nu(s)}}{s}{\tau}{t} \\
& + 2(1+C_T) \NormL{m(\cdot)}{1}{[\tau,t]} \bigg( \INTDom{(1+|y|)^p}{\{ y \; \textnormal{s.t.} \; |y| \geq R/C_T-1 \}}{\nu_{\tau}(y)} \bigg)^{\hspace{-0.05cm} 1/p} \, \Bigg) \\
& \hspace{7.25cm} \times \exp \Big( C_p' \NormL{l(\cdot)}{1}{[\tau,t]}^p \hspace{-0.05cm} \Big),
\end{aligned}
\end{equation*}
for all times $t \in [\tau,T]$, which concludes the proof of our claim. 
\end{proof}


\bibliographystyle{plain}
{\footnotesize
\bibliography{../../ControlWassersteinBib}

\begin{thebibliography}{10}

\bibitem{Albi2014}
G.~Albi, D~Balagu{\'e}, J.A. Carrillo, and J.~von Brecht.
\newblock {Stability Analysis of Flock and Mill Rings for Second Order Models
  in Swarming}.
\newblock {\em SIAM Journal of Applied Mathematics}, 74(3):794--818, 2014.

\bibitem{AlbiPZ2017}
G.~Albi, L.~Pareschi, G.~Toscani, and M.~Zanella.
\newblock {Recent Advances in Opinion Modeling: Control and Social Influence}.
\newblock {\em Active Particles, Volume 1}, pages 49--98, 2017.

\bibitem{AmbrosioPDE}
L.~Ambrosio.
\newblock {T}ransport {E}quation and {C}auchy {P}roblem for {BV} {V}ector
  {F}ields.
\newblock {\em Inventiones Mathematicae}, 158(2):227--260, 2004.

\bibitem{AmbrosioC2014}
L.~Ambrosio and G.~Crippa.
\newblock {Continuity Equations and ODE Flows with Non-Smooth Velocities}.
\newblock {\em Proceedings of the Royal Society of Edinburgh},
  144(6):1191--1244, 2014.

\bibitem{AmbrosioFuscoPallara}
L.~Ambrosio, N.~Fusco, and D.~Pallara.
\newblock {\em {F}unctions of {B}ounded {V}ariations and {F}ree {D}iscontinuity
  {P}roblems}.
\newblock Oxford Mathematical Monographs, 2000.

\bibitem{AmbrosioGangbo}
L.~Ambrosio and W.~Gangbo.
\newblock {Hamiltonian ODEs in the Wasserstein Space of Probability Measures}.
\newblock {\em Communications on Pure and Applied Mathematics}, 61(1):18--53,
  2008.

\bibitem{AGS}
L.~Ambrosio, N.~Gigli, and G.~Savar{\'e}.
\newblock {\em {G}radient {F}lows in {M}etric {S}paces and in the {S}pace of
  {P}robability {M}easures}.
\newblock Lectures in Mathematics ETH Z\"urich. Birkh\"auser Verlag, 2008.

\bibitem{Aubin1984}
J.-P. Aubin and A.~Cellina.
\newblock {\em {Differential Inclusions}}.
\newblock Springer-Verlag, 1984.

\bibitem{Aubin1990}
J.-P. Aubin and H.~Frankowska.
\newblock {\em Set-Valued Analysis}.
\newblock Modern Birkh{\"a}user Classics. Birkh{\"a}user Basel, 1990.

\bibitem{Benamou2016}
J.-D. Benamou, G.~Carlier, and M.~Laborde.
\newblock {An Augmented Lagragian Approach to Wasserstein Gradient Flows and
  Applications}.
\newblock {\em ESAIM: Proceedings and Surveys}, 54:1--17, 2016.

\bibitem{Bernard2008}
P.~Bernard.
\newblock {Young Measures, Superposition and Transport}.
\newblock {\em Indiana Univ. Math. Journal}, 57(1):247--275, 2008.

\bibitem{Bianchini2020}
S.~Bianchini and P.~Bonicatto.
\newblock {A Uniqueness Result for the Decomposition of Vector Fields in
  $\mathbb{R}^d$}.
\newblock {\em Inventiones Matematicae}, 220:255–393, 2020.

\bibitem{BogachevI}
V.I. Bogachev.
\newblock {\em {Measure Theory}}, volume~1.
\newblock Berlin: Springer, 2007.

\bibitem{ContInc}
B.~Bonnet and H.~Frankowska.
\newblock {Differential Inclusions in Wasserstein Spaces: The Cauchy-Lipschitz
  Framework}.
\newblock {\em Journal of Differential Equations}, 271:594--637, 2021.

\bibitem{SetValuedPMP}
B.~Bonnet and H.~Frankowska.
\newblock {Necessary Optimality Conditions for Optimal Control Problems in
  Wasserstein Spaces}.
\newblock {\em Applied Mathematics and Optimization}, 84:1281–1330, 2021.

\bibitem{SemiSensitivity}
B.~Bonnet and H.~Frankowska.
\newblock {Semiconcavity and Sensitivity Analysis in Mean-Field Optimal Control
  and Applications}.
\newblock {\em Journal de Math\'ematiques Pures et Appliqu\'ees}, 157:282--345,
  2022.

\bibitem{PMPWass}
B.~Bonnet and F.~Rossi.
\newblock {The Pontryagin Maximum Principle in the Wasserstein Space}.
\newblock {\em Calculus of Variations and Partial Differential Equations},
  58:11, 2019.

\bibitem{Brezis}
H.~Br\'ezis.
\newblock {\em {F}unctional {A}nalysis, {S}obolev {S}paces and {P}artial
  {D}ifferential {E}quations}.
\newblock Universitext. Springer, 2010.

\bibitem{Camilli2021}
F.~Camilli, G.~Cavagnari, R.~De~Maio, and B.~Piccoli.
\newblock {Superposition Principle and Schemes for Measure Differential
  Equations}.
\newblock {\em Kinetic and Related Models}, 14(1):89--113, 2021.

\bibitem{Cannarsa2021}
P.~Cannarsa, R.~Capuani, and P.~Cardaliaguet.
\newblock {Mean-Field Games with State-Constraints: From Mild to Pointwise
  Solutions to the PDE System}.
\newblock {\em Calculus of Variations and Partial Differential Equations},
  60:108, 2021.

\bibitem{Cannarsa1991}
P.~Cannarsa and H.~Frankowska.
\newblock {Some Characterizations of Optimal Trajectories in Control Theory}.
\newblock {\em SIAM Journal on Control and Optimization}, 29(6):1322--1347,
  1991.

\bibitem{CannarsaS2004}
P.~Cannarsa and C.~Sinestrari.
\newblock {\em {Semiconcave Functions, Hamilton-Jacobi Equations and Optimal
  Control}}.
\newblock Progresses in Nonlinear Differential Equations and Their
  Applications. Birkha\"auser Basel, 2004.

\bibitem{CDLL}
P~Cardaliaguet, F.~Delarue, J-M. Lasry, and P.-L. Lions.
\newblock {\em {The Master Equation and the Convergence Problem in Mean Field
  Games}}.
\newblock Annals of Mathematics Studies. Princeton Univesrity Press, 2019.

\bibitem{Carlier2017}
G.~Carlier, V.~Duval, G.~Peyr\'e, and B.~Schmitzer.
\newblock {Convergence of Entropic Schemes for Optimal Transport and Gradient
  Flows}.
\newblock {\em SIAM Journal on Mathematical Analysis}, 49(2):1385--1418, 2017.

\bibitem{CarrilloF2011}
J.A. Carrillo, M.~Di~Francesco, A.~Figalli, T.~Laurent, and D.~Slepčev.
\newblock {Global-in-Time Weak Measure Solutions and Finite-Time Aggregation
  for Nonlocal Interaction Equations}.
\newblock {\em Duke Mathematical Journal}, 156(2):229--271, 2011.

\bibitem{Carrillo2010}
J.A. Carrillo, M.~Fornasier, J.~Rosado, and G.~Toscani.
\newblock {Asymptotic Flocking for the Kinetic Cucker-Smale Model}.
\newblock {\em SIAM Journal on Mathematical Analysis}, 42(1):218--236, 2010.

\bibitem{Carrillo2014}
J.A. Carrillo, S.~Lisini, and E.~Mainini.
\newblock {Gradient Flows for Non-Smooth Interaction Potentials}.
\newblock {\em Nonlinear Analysis; Theory, Methods and Applications},
  100:122--147, 2014.

\bibitem{Carrillo2013}
J.A. Carrillo, S.~Martin, and V.~Panferov.
\newblock { A New Interaction Potential for Swarming Models}.
\newblock {\em Physica D:Nonlinear Phenomena}, 260:112--126, 2013.

\bibitem{Cavagnari2022}
G.~Cavagnari, S.~Lisini, C.~Orrieri, and G.~Savaré.
\newblock {Lagrangian, Eulerian and Kantorovich Formulations of Multi-Agent
  Optimal Control Problems: Equivalence and Gamma-Convergence}.
\newblock {\em Journal of Differential Equations}, 322:268--364, 2022.

\bibitem{CavagnariMQ2021}
G.~Cavagnari, A.~Marigonda, and M.~Quincampoix.
\newblock {Compatibility of State Constraints and Dynamics for Multiagent
  Control Systems}.
\newblock {\em Journal of Evolution Equations}, 21(4):4491--4537, 2021.

\bibitem{CavagnariSS2022}
G.~Cavagnari, G.~Savaré, and G.E. Sodini.
\newblock {Dissipative Probability Vector Fields and Generation of Evolution
  Semigroups in Wasserstein Spaces}.
\newblock {\em Probability Theory and Related Fields}, pages 1--96, 2022.

\bibitem{Cesari2012}
L.~Cesari.
\newblock {\em {Optimization Theory and Applications. Problems with Ordinary
  Differential Equation}}, volume~17 of {\em Science and Business Media}.
\newblock Springer, 2012.

\bibitem{Clarke}
F~Clarke.
\newblock {\em {F}unctional {A}nalysis, {C}alculus of {V}ariations and
  {O}ptimal {C}ontrol}, volume 264.
\newblock Springer, 2013.

\bibitem{Crippa2013}
G.~Crippa and M.~L\'ecureux-Mercier.
\newblock {Existence and Uniqueness of Measure Solutions for a System of
  Continuity Equations with Non-Local Flow}.
\newblock {\em Nonlinear Differential Equations and Applications}, 20:523--537,
  2013.

\bibitem{CPT}
E.~Cristiani, B.~Piccoli, and A.~Tosin.
\newblock {\em {Multiscale Modeling of Pedestrian Dynamics}}, volume~12.
\newblock Springer, 2014.

\bibitem{DiPernaLions}
R.L. Di~Perna and Lions P.-L.
\newblock {O}rdinary {D}ifferential {E}quations, {T}ransport {T}heory and
  {S}obolev {S}paces.
\newblock {\em Inventiones Mathematicae}, 98(3):511--548, 1989.

\bibitem{Diestel1977}
J.~Diestel.
\newblock {Remarks on Weak Compactness in $L^1(\mu,X)$}.
\newblock {\em Glasgow Mathematical Journal}, 18(1):87--91, 1977.

\bibitem{Diestel1993}
J.~Diestel, W.M. Ruess, and W.~Schachermayer.
\newblock {On Weak Compactness in $L^1(\mu,X)$}.
\newblock {\em Proceedings of the American Mathematical Society}, 118:447--453,
  1993.

\bibitem{DiestelUhl}
J.~Diestel and J.J.Jr Uhl.
\newblock {\em {V}ector {M}easures}.
\newblock Number~15 in Mathematical Surveys. American Mathematical Society,
  1977.

\bibitem{Dolbeault2009}
J.~Dolbeault, B.~Nazareth, and G.~Savaré.
\newblock {A New Class of Transport Distances Between Measures}.
\newblock {\em Calculus of Variations and Partial Differential Equations},
  34(2):193--231, 2009.

\bibitem{Erbar2010}
M.~Erbar.
\newblock {The Heat Equation on Manifolds as a Gradient Flow in the Wasserstein
  Space }.
\newblock {\em Annales de l'I.H.P. Probabilit\'es et statistiques},
  46(1):1--23, 2010.

\bibitem{Filippov2013}
A.F. Filippov.
\newblock {\em {Differential Equations with Discontinuous Righthand Sides:
  Control Systems}}, volume~18 of {\em Science and Business Media}.
\newblock Springer, 2013.

\bibitem{Filippov1962}
A.L. Filippov.
\newblock {On Certain Questions in the Theory of Optimal Control}.
\newblock {\em Journal of the Society for Industrial and Applied Mathematics,
  Series A: Control}, 1(1):76--84, 1962.

\bibitem{Fornasier2019}
M.~Fornasier, S.~Lisini, C.~Orrieri, and G.~Savar{\'e}.
\newblock {Mean-Field Optimal Control as Gamma-Limit of Finite Agent Controls}.
\newblock {\em European Journal of Applied Mathematics}, 30(6):1153--1186,
  2019.

\bibitem{FornasierPR2014}
M.~Fornasier, B.~Piccoli, and F.~Rossi.
\newblock {M}ean-{F}ield {S}parse {O}ptimal {C}ontrol.
\newblock {\em Philosophical Transactions of the Royal Society A.},
  372(20130400), 2014.

\bibitem{Fornasier2014}
M.~Fornasier and F.~Solombrino.
\newblock {Mean Field Optimal Control}.
\newblock {\em ESAIM COCV}, 20(4):1123--1152, 2014.

\bibitem{Frankowska1987}
H.~Frankowska.
\newblock {The Maximum Principle for an Optimal Solution to a Differential
  Inclusion with End-Point Constraints}.
\newblock {\em SIAM Journal on Control and Optimization}, 25(1):145--157, 1987.

\bibitem{Frankowska1989}
H.~Frankowska.
\newblock {Optimal Trajectories Associated with a Solution of the Contingent
  Hamilton-Jacobi Equation}.
\newblock {\em Applied Mathematics and Optimization}, 19(1):291--311, 1989.

\bibitem{Frankowska1990}
H.~Frankowska.
\newblock {A Priori Estimates for Operational Differential Inclusions}.
\newblock {\em Journal of Differential Equations}, 84:100--128, 1990.

\bibitem{FrankowskaMM2018}
H.~Frankowska, E.M. Marchini, and M.~Mazzola.
\newblock {Necessary Optimality Conditions for Infinite Dimensional State
  Constrained Control Problems}.
\newblock {\em Journal of Differential Equations}, 264(12):7294--7327, 2018.

\bibitem{FrankowskaO2023}
H.~Frankowska and N.P. Osmolovski.
\newblock {Second-Order Sufficient Conditions, Part 2: Weak Local Minima in an
  Optimal Control Problem with a General Control Constraint}.
\newblock {\em Systems and Control Letters}, 182:105673, 2023.

\bibitem{Frankowska2018}
H.~Frankowska and N.P. Osmolovskii.
\newblock {Strong Local Minimizers in Optimal Control Problems with State
  Constraints: Second Order Necessary Conditions}.
\newblock {\em SIAM Journal on Control and Optimization}, 58(3):2353--2376,
  2018.

\bibitem{Frankowska1995}
H.~Frankowska, S.~Plaskacz, and T.~Rzezuchowski.
\newblock {Measurable Viability Theorems and the Hamilton-Jacobi-Bellman
  Equation}.
\newblock {\em Journal of Differential Equations}, 116(2):265--305, 1995.

\bibitem{FrankowskaZZ2018}
H.~Frankowska, H.~Zhang, and X.~Zhang.
\newblock {Stochastic Optimal Control Problems with Control and Initial-Final
  States Constraints}.
\newblock {\em SIAM Journal on Control and Optimization}, 56(3):1823--1855,
  2018.

\bibitem{Gianazza2009}
Ugo Gianazza, Giuseppe Savar{\'e}, and Giuseppe Toscani.
\newblock The {W}asserstein {G}radient {F}low of the {F}isher {I}nformation and
  the {Q}uantum {D}rift-{D}iffusion {E}quation.
\newblock {\em Archives for Rational Mechanics and Analysis}, 1(194):133--220,
  2009.

\bibitem{Ha2009}
S.-Y. Ha, K.~Lee, and D.~Levy.
\newblock {Emergence of Time-Asymptotic Flocking in a Stochastic Cucker-Smale
  System}.
\newblock {\em Comm. Math. Sci.}, 7(2):453--469, 2009.

\bibitem{Horvath2012}
John Horv{\'a}th.
\newblock {\em {Topological Vector Spaces and Distributions}}.
\newblock Courier Corporation, 2012.

\bibitem{Huang2006}
M.Y. Huang, R.~Malham\'e, and P.E. Caines.
\newblock {Large Population Stochastic Dynamic Games : Closed-Loop
  McKean-Vlasov Systems and the Nash Certainty Equivalence Principle}.
\newblock {\em Communications in Information and Systems}, 6(3):221--252, 2006.

\bibitem{Jimenez2020}
C.~Jimenez, A.~Marigonda, and M.~Quincampoix.
\newblock {Optimal Control of Multiagent Systems in the Wasserstein Space}.
\newblock {\em Calculus of Variations and Partial Differential Equations},
  59:58, 2020.

\bibitem{Jordan1998}
R.~Jordan, D.~Kinderlehrer, and F.~Otto.
\newblock {The Variational Formulation of the Fokker-Planck Equation}.
\newblock {\em SIAM Journal of Mathematical Analysis}, 29(1):1--17, 1998.

\bibitem{Kelley1975}
J.L. Kelley.
\newblock {\em {General Topology}}, volume~27 of {\em Graduate Texts in
  Mathematics}.
\newblock Springer, 1975.

\bibitem{Lasry2007}
J-M. Lasry and P.-L. Lions.
\newblock {Mean Field Games}.
\newblock {\em Japanese Journal of Mathematics}, 2(1):229--260, 2007.

\bibitem{Maury2010}
B.~Maury, A.~Roudneff-Chupin, and F.~Santambrogio.
\newblock {A Macroscopic Crowd Motion Model of Gradient Flow Type}.
\newblock {\em Mathematical Models and Methods in Applied Sciences},
  20(10):1787--1821, 2010.

\bibitem{Maury2011}
B.~Maury, A.~Roudneff-Chupin, F.~Santambrogio, and J.~Venel.
\newblock {Handling Congestion in Crowd Motion Models}.
\newblock {\em Networks and Heterogeneous Media}, 6(3):485 – 519, 2011.

\bibitem{Otto2001}
F.~Otto.
\newblock {The Geometry of Dissipative Equations : The Porous Medium Equation}.
\newblock {\em Communications in Partial Differential Equations}, 26:101--174,
  2001.

\bibitem{Papageorgiou1986}
N.S. Papageorgiou.
\newblock {Random Fixed Point Theorems for Measurable Multifunctions in Banach
  Spaces}.
\newblock {\em Proceedings of the American Mathematical Society},
  97(3):507--514, 1986.

\bibitem{Peyre2015}
G.~Peyr\'e.
\newblock {Entropic Approximation of Wasserstein Gradient Flows}.
\newblock {\em SIAM Journal on Imaging Sciences}, 8(4):2323–2351, 2015.

\bibitem{PiccoliCDC2018}
B.~Piccoli.
\newblock {Measure Differential Inclusions}.
\newblock In {\em 2018 IEEE Conference on Decision and Control (CDC), Miami
  Beach, FL}, pages 1323--1328, 2018.

\bibitem{Piccoli2019}
B.~Piccoli.
\newblock {Measure Differential Equations}.
\newblock {\em Archive for Rational Mechanics and Analysis}, 233:1289–1317,
  2019.

\bibitem{Pedestrian}
B.~Piccoli and F.~Rossi.
\newblock {Transport Equation with Nonlocal Velocity in Wasserstein Spaces :
  Convergence of Numerical Schemes}.
\newblock {\em Acta Applicandae Mathematicae}, 124(1):73--105, 2013.

\bibitem{Piccoli2018}
B.~Piccoli and F.~Rossi.
\newblock {Measure Theoretic Models for Crowd Dynamics}.
\newblock In {\em {Crowd Dynamics Volume 1 - Theory, Models, and Safety
  Problems}}, pages 137--165. Birkhauser, 2018.

\bibitem{ControlKCS}
B.~Piccoli, F.~Rossi, and E.~Tr{\'e}lat.
\newblock {Control to Flocking of the Kinetic Cucker-Smale model}.
\newblock {\em SIAM Journal on Mathematical Analysis}, 47(6):4685--4719, 2015.

\bibitem{Rudin1987}
W.~Rudin.
\newblock {\em {Real and Complex Analysis}}.
\newblock Mathematical Series. McGraw-Hill International Editions, 1987.

\bibitem{OTAM}
F.~Santambrogio.
\newblock {\em {O}ptimal {T}ransport for {A}pplied {M}athematicians},
  volume~87.
\newblock Birkhauser Basel, 2015.

\bibitem{Santambrogio2017}
F.~Santambrogio.
\newblock {$\{$Euclidean, metric and Wasserstein$\}$ Gradient Flows : An
  Overview}.
\newblock {\em Bulletin of Mathematical Sciences}, 7:87--154, 2017.

\bibitem{Ulger1991}
A.~\"Ulger.
\newblock {Weak Compactness in $L^1(\mu,X)$}.
\newblock {\em Proceedings of the American Mathematical Society}, 113:143--149,
  1991.

\bibitem{villani1}
C.~Villani.
\newblock {\em {Optimal Transport : Old and New}}.
\newblock Springer-Verlag, Berlin, 2009.

\bibitem{Vinter}
R.B. Vinter.
\newblock {\em Optimal Control}.
\newblock Systems and Control: Foundations and Applications. Birkhauser Basel,
  2000.

\bibitem{Wagner1977}
D.H. Wagner.
\newblock {Survey of Measurable Selection Theorems}.
\newblock {\em SIAM Journal on Control and Optimization}, 15(5):859--903, 1977.

\bibitem{Warner1958}
S.~Warner.
\newblock {The Topology of Compact Convergence on Continuous Function Spaces}.
\newblock {\em Duke Mathematical Journal}, 25(2):265--282, 1958.

\end{thebibliography}
}

\end{document}